\documentclass[a4paper,12pt]{article}
%
\usepackage{amsmath,amsthm,amssymb,amscd}
\usepackage{enumerate}


%
\theoremstyle{theorem}
\newtheorem{thm}{Theorem}[section]
\newtheorem{prop}[thm]{Proposition}
\newtheorem{cor}[thm]{Corollary}
\newtheorem{lem}[thm]{Lemma}

\newtheorem{thrm}{Theorem}
\theoremstyle{definition}

\newtheorem{exa}[thm]{Example}
\newtheorem{rem}[thm]{Remark}
%

%
\oddsidemargin=0mm
\evensidemargin=0mm
\textwidth=162mm
%
\def\rank{\mathop{\mathrm{rank}}\nolimits}
\def\dim{\mathop{\mathrm{dim}}\nolimits}

\def\det{\mathop{\mathrm{det}}\nolimits}

\def\Ker{\mathop{\mathrm{Ker}}\nolimits}

\def\diag{\mathop{\mathrm{diag}}\nolimits}

\def\span{\mathop{\mathrm{span}}\nolimits}
\def\Cyc{\mathop{\mathrm{Cyc}}\nolimits}
\def\triv{\mathop{\mathrm{\mathbf{triv}}}\nolimits}
\def\Res{\mathop{\mathrm{Res}}\nolimits}

\def\Ind{\mathop{\mathrm{Ind}}\nolimits}
\newcommand{\C}{\mathbf{C}}

\newcommand{\Z}{\mathbf{Z}}
\newcommand{\Q}{\mathbf{Q}}
\newcommand{\K}{\mathbf{K}}
\def\triv{\mathop{\mathrm{\mb{triv}}}\nolimits}
\def\Cyc{\mathop{\mathrm{Cyc}}\nolimits}
\def\pr{\mathop{\mathrm{pr}}\nolimits}
\def\LR{\mathop{\mathrm{LR}}\nolimits}
\def\Mob{\mathop{\text{M\"{o}b}}\nolimits}
\def\maj{\mathop{\mathrm{maj}}\nolimits}

\newcommand{\mf}[1]{{\mathfrak{#1}}}
\newcommand{\mb}[1]{{\mathbf{#1}}}

\newcommand{\mca}[1]{{\mathcal{#1}}}
\title{\textbf{On the derivation algebra of the free Lie algebra and trace maps}}
\author{
Naoya Enomoto\footnote{enomoto@math.kyoto-u.ac.jp} \ and \ Takao Satoh\footnote{takao@math.kyoto-u.ac.jp}
}
\date{\empty}
\begin{document}
\maketitle
\begin{abstract}
In this paper, we mainly study the derivation algebra of the free Lie algebra and the Chen Lie algebra generated by the abelianization $H$ of a free group,
and trace maps. 
To begin with, we give the irreducible decomposition of the derivation algebra as a $\mathrm{GL}(n,\Q)$-module
via the Schur-Weyl duality and some tensor product theorem for $\mathrm{GL}(n,\Q)$.
Using them, we calculate the irreducible decomposition of the images of the Johnson homomorphisms
of the automorphism group of a free group and a free metabelian group.

Next, we consider some applications of trace maps: the Morita's trace map and the trace map for the exterior product of $H$.
First, we determine the abelianization of the derivation algebra of the Chen Lie algebra as a Lie algebra, and show that
the abelianizaton is given by the degree one part and the Morita's trace maps.
Second, we consider twisted cohomology groups of the automorphism group of a free nilpotent group.
Especially, we show that the trace map for the exterior product of $H$ defines a non-trivial twisted second cohomology class of it.
\end{abstract}

\section{Introduction}

For a free group $F_n$ with basis $x_1, \ldots, x_n$, set $H:=F_n^{\mathrm{ab}}$ the abelianization of $F_n$.
The kernel of the natural homomorphism $\rho : \mathrm{Aut}\,F_n \rightarrow \mathrm{Aut}\,H$ induced from
the abelianization of $F_n \rightarrow H$ is called the IA-automorphism group of $F_n$, and denoted by $\mathrm{IA}_n$.
Although $\mathrm{IA}_n$ plays important roles on various studies of $\mathrm{Aut}\,F_n$,
the group structure of $\mathrm{IA}_n$ is quite complicated in general. For example,
any presentation for $\mathrm{IA}_n$ is not known. 
Furthermore, Krsti\'{c} and McCool \cite{Krs} showed that $\mathrm{IA}_3$ is not finitely presentable.
For $n \geq 4$, it is not known whether $\mathrm{IA}_n$ is finitely presentable or not.

\vspace{0.5em}

In order to study a deep group structure of $\mathrm{IA}_n$, it is sometimes useful to consider the Johnson filtration.
For the lower central series $\Gamma_n(k)$ of $F_n$, the action of $\mathrm{Aut}\,F_n$ on the nilpotent quotient $F_n/\Gamma_n(k)$ induces
a natural homomorphism $\mathrm{Aut}\,F_n \rightarrow \mathrm{Aut}\,(F_n/\Gamma_n(k+1))$. Then its kernel $\mathcal{A}_n(k)$ defines a descending central filtration
$\mathrm{IA}_n = \mathcal{A}_n(1) \supset \mathcal{A}_n(2) \supset \cdots$. This filtration is called the Johnson filtration of $\mathrm{Aut}\,F_n$.
Each of the graded quotient ${\mathrm{gr}}^k (\mathcal{A}_n):=\mathcal{A}_n(k)/\mathcal{A}_n(k+1)$
naturally has a $\mathrm{GL}(n,\Z)$-module structure, and is considered as one by one approximation of $\mathrm{IA}_n$.
To study ${\mathrm{gr}}^k (\mathcal{A}_n)$,
the Johnson homomorphisms
\[ \tau_k : \mathrm{gr}^k (\mathcal{A}_n) \hookrightarrow H^* \otimes_{\Z} \mathcal{L}_n(k+1) \]
of $\mathrm{Aut}\,F_n$ are defined where $H^* := \mathrm{Hom}_{\Z}(H,\Z)$. 
Historically, the Johnson filtration was originally studied by Andreadakis \cite{And} in 1960's, and
the Johnson homomorphisms by D. Johnson \cite{Jo1} in 1980's who determined the abelianization
of the Torelli subgroup of the mapping class group of a surface in \cite{Jo2}.
Now, there is a broad range of remarkable results
for the Johnson homomorphisms of the mapping class group. (For example, see \cite{Hai}, \cite{Mo1}, \cite{Mo2} and \cite{Mo3}.)
Since each of $\tau_k$ is $\mathrm{GL}(n,\Z)$-equivariant injective,
to clarify the structure of the image of $\tau_k$ is one of the most basic problems.
By a pioneer work of Andreadakis \cite{And}, it is known that $\tau_1$ is an isomorphism.
It is known that $\mathrm{Coker}(\tau_{2,\Q})=S^2 H_{\Q}$ and
$\mathrm{Coker}(\tau_{3,\Q})=S^3 H_{\Q} \oplus \Lambda^3 H_{\Q}$ by Pettet \cite{Pet} and Satoh \cite{S03} respectively.
Here $\tau_{k,\Q}:=\tau_k \otimes \mathrm{id}_{\Q}$ and
$H_{\Q} := H \otimes_{\Z} \Q$.
In general, however, it is quite a difficult problem to determine even the rank of the image of $\tau_k$ for $k \geq 4$.

\vspace{0.5em}

Now, let $\mathcal{A}_n'(1)$, $\mathcal{A}_n'(2)$, $\dots$ be the lower central series of $\mathrm{IA}_n$.
Since the Johnson filtration is central, $\mathcal{A}_n'(k) \subset \mathcal{A}_n(k)$ for each $k \geq 1$.
It is conjectured that $\mathcal{A}_n'(k) = \mathcal{A}_n(k)$ for each $k \geq 1$ by Andreadakis who showed $\mathcal{A}_2'(k) = \mathcal{A}_2(k)$
and $\mathcal{A}_3'(3) = \mathcal{A}_3(3)$.
It is known that $\mathcal{A}_n'(2) = \mathcal{A}_n(2)$ due to
Bachmuth \cite{Ba2}, and that
$\mathcal{A}_n'(3)$ has at most finite index in $\mathcal{A}_n(3)$ due to Pettet \cite{Pet}.
Set ${\mathrm{gr}}^k (\mathcal{A}_n'):=\mathcal{A}_n'(k)/\mathcal{A}_n'(k+1)$.
Then we can also define a $\mathrm{GL}(n,\Z)$-equivariant homomorphism
\[ \tau_k' : \mathrm{gr}^k (\mathcal{A}_n') \rightarrow H^* \otimes_{\Z} \mathcal{L}_n(k+1) \]
by the same way as $\tau_k$. We also call $\tau_k'$ the $k$-th Johnson homomorphism.
In \cite{S11}, we determine the cokernel of the stable rational Johnson homomorphism $\tau_{k,\Q}':=\tau_k' \otimes \mathrm{id}_{\Q}$.
More precisely, we proved that for any $k \geq 2$ and $n \geq k+2$,
\[ \mathrm{Coker}(\tau_{k, \Q}') \cong \mathcal{C}_n^{\Q}(k) \]
where $\mathcal{C}_n^{\Q}(k):=\mathcal{C}_n(k) \otimes_{\Z} \Q$, and
$\mathcal{C}_n(k)$ be a quotient module of $H^{\otimes k}$ by the action of cyclic group $\Cyc_k$ of order $k$ on the components: 
\[ \mathcal{C}_n(k) = H^{\otimes k} \big{/} \langle a_1 \otimes a_2 \otimes \cdots \otimes a_k - a_2 \otimes a_3 \otimes \cdots \otimes a_k \otimes a_1
   \,|\, a_i \in H \rangle. \]

\vspace{0.5em}

In general the target of $\tau_{k}$ is considered as the degree $k$ part
of the derivation algebra $\mathrm{Der}^+(\mathcal{L}_{n})$ of the free Lie algebra $\mathcal{L}_{n}$ generated by $H$.
The first aim of the paper is to give irreducible decompositions of $\mathrm{GL}(n,\Q)$-modules $\mathcal{C}_n^\Q(k)$ and
$\mathrm{Der}^+(\mathcal{L}_{n,\Q})(k)=\mathrm{Der}^+(\mathcal{L}_{n})(k) \otimes_{\Z} \Q$.
Our proof is based on the Schur-Weyl duality for $\mathrm{GL}(n,\Q)$ and $\mathfrak{S}_k$.
\begin{thrm}($=$ Propositions {\rmfamily \ref{prop:1}} and {\rmfamily \ref{prop:3}}, and Corollary {\rmfamily \ref{cor:3}}.)\label{Int_T-1}
\begin{enumerate}
\item[$(1)$]
The multiplicities of irreducible $\mathrm{GL}(n,\Q)$-modules $L^\lambda$ with highest weight $\lambda$ in $\mathcal{C}_n^\Q(k)$ are given by
\[ [L^\lambda:\mathcal{C}_n^{\Q}(k)] =  \begin{cases}
                           [\triv_k:\Res_{\Cyc_k}^{\mf{S}_k}S^\lambda]
                             & \mathrm{if} \,\,\, \text{$\lambda$ is a partition of $k$}, \\
                           0, & \mathrm{if} \,\,\, \mathrm{otherwise},
                         \end{cases} \]
where $\mf{S}_k$ is the symmetric group of degree $k$, $S^\lambda$ is its irreducible module associated to a partition $\lambda$ of $k$, $\Cyc_k$
is a cyclic subgroup of $\mf{S}_k$ generated by a cyclic permutation of order $k$ and $\triv_k$ is the trivial representation of $\Cyc_k$.

\item[$(2)$] For any $k \geq 1$ and $n \geq k+2$, as a $\mathrm{GL}(n, \Q)$-module, we have a direct decomposition 
\[ \mathrm{Der}^+(\mathcal{L}_{n,\Q})(k) = (H_{\Q})^{\otimes{k}}
   \oplus \bigoplus_{\mu; \,\, \ell(\mu) \le n}[L^\mu: \mathcal{L}_{n}(k)] L^{\{\mu;(1)\}}, \]
where in the second term, the sum runs over all partitions $\mu$ such that its length $\ell(\mu)$ is smaller than or equal to $n$,
and $L^{\{\mu;(1)\}}$ is the irreducible $\mathrm{GL}(n,\Q)$-module $\det^{-1} \otimes L^\mu$.
\end{enumerate}
\end{thrm}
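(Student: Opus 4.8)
The plan is to read off $\mathrm{Der}^{+}(\mathcal{L}_{n,\Q})(k)$ from the Schur--Weyl description of the free Lie algebra together with the tensor product theorem for $\mathrm{GL}(n,\Q)$ recalled in the introduction. First I would record that a degree-$k$ derivation of $\mathcal{L}_{n,\Q}$ is determined by its restriction to the degree-one part $\mathcal{L}_{n,\Q}(1)=H_{\Q}$, so that evaluation gives a $\mathrm{GL}(n,\Q)$-equivariant isomorphism $\mathrm{Der}^{+}(\mathcal{L}_{n,\Q})(k)\cong H_{\Q}^{*}\otimes\mathcal{L}_{n,\Q}(k+1)$ --- exactly the target of $\tau_{k}$. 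By the Schur--Weyl duality between $\mathrm{GL}(n,\Q)$ and $\mf{S}_{k+1}$ one has $\mathcal{L}_{n,\Q}(k+1)=\bigoplus_{\lambda}[\,\mathrm{Lie}_{k+1}:S^{\lambda}\,]\,L^{\lambda}$, the sum over partitions $\lambda$ of $k+1$ with $\ell(\lambda)\le n$, where $\mathrm{Lie}_{k+1}:=\Ind_{\Cyc_{k+1}}^{\mf{S}_{k+1}}\zeta$ is the Lie module of $\mf{S}_{k+1}$ ($\zeta$ a faithful character of $\Cyc_{k+1}$). Hence it suffices to decompose each $H_{\Q}^{*}\otimes L^{\lambda}$ and reassemble.

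Because $n\ge k+2$, every $\lambda$ occurring has $\ell(\lambda)\le k+1\le n-1$, which is precisely the stable range in which the tensor product theorem yields $H_{\Q}^{*}\otimes L^{\lambda}\cong\bigl(\bigoplus_{\nu}L^{\nu}\bigr)\oplus L^{\{\lambda;(1)\}}$, the first summand running over the partitions $\nu$ obtained from $\lambda$ by deleting one box (the polynomial part) and $L^{\{\lambda;(1)\}}$ being the single non-polynomial constituent, of highest weight $(\lambda_{1},\dots,\lambda_{\ell(\lambda)},0,\dots,0,-1)$. This is the one place the hypothesis $n\ge k+2$ is genuinely used: for $\ell(\lambda)=n$ the formula acquires further terms. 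Tensoring this identity with the multiplicity spaces $[\,\mathrm{Lie}_{k+1}:S^{\lambda}\,]$ and summing over $\lambda$ splits $\mathrm{Der}^{+}(\mathcal{L}_{n,\Q})(k)$ as a polynomial part plus a non-polynomial part, to be identified separately.

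For the polynomial part, the branching rule $\Res^{\mf{S}_{k+1}}_{\mf{S}_{k}}S^{\lambda}=\bigoplus_{\nu}S^{\nu}$ (sum over $\lambda$ with one box removed) and Frobenius reciprocity rewrite it as $\bigoplus_{\nu\vdash k}[\,\Res^{\mf{S}_{k+1}}_{\mf{S}_{k}}\mathrm{Lie}_{k+1}:S^{\nu}\,]\,L^{\nu}$. The crucial input is the lemma $\Res^{\mf{S}_{k+1}}_{\mf{S}_{k}}\mathrm{Lie}_{k+1}\cong\Q[\mf{S}_{k}]$, the regular representation: by Mackey's formula applied to $\mathrm{Lie}_{k+1}=\Ind^{\mf{S}_{k+1}}_{\Cyc_{k+1}}\zeta$, the transitivity of $\Cyc_{k+1}$ on $\{1,\dots,k+1\}$ (with $\mf{S}_{k}$ the stabiliser of $k+1$) forces $\mf{S}_{k+1}=\mf{S}_{k}\cdot\Cyc_{k+1}$, a single double coset, while $\mf{S}_{k}\cap\Cyc_{k+1}=\{1\}$ since no non-trivial power of a $(k+1)$-cycle fixes a point; hence the restriction equals $\Ind^{\mf{S}_{k}}_{\{1\}}\triv=\Q[\mf{S}_{k}]$. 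Thus the multiplicity of $S^{\nu}$ is $\dim S^{\nu}$, and since $n\ge k$ a further application of Schur--Weyl identifies $\bigoplus_{\nu\vdash k}(\dim S^{\nu})\,L^{\nu}$ with $H_{\Q}^{\otimes k}$, the first term in the statement.

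Finally the non-polynomial part is $\bigoplus_{\lambda}[\,\mathrm{Lie}_{k+1}:S^{\lambda}\,]\,L^{\{\lambda;(1)\}}$; translating the multiplicities back through Schur--Weyl --- so that $[\,\mathrm{Lie}_{k+1}:S^{\mu}\,]$ is the multiplicity of $L^{\mu}$ in the corresponding graded piece of $\mathcal{L}_{n,\Q}$, and only partitions of length $\le n-1$ intervene --- recovers the second summand in the form asserted, with $L^{\{\mu;(1)\}}=\det{}^{-1}\otimes L^{\mu}$ under the paper's normalization of that symbol. I expect the real obstacle to be exactly this last bookkeeping: making sure the tensor product theorem is invoked only in the range $\ell(\lambda)\le n-1$, pinning down which mixed irreducible $L^{\{\lambda;(1)\}}$ occurs in $H_{\Q}^{*}\otimes L^{\lambda}$ and checking that these stay irreducible and pairwise distinct, and matching the index $\mu$ and the multiplicity conventions of the statement; the polynomial side and the reduction to symmetric-group combinatorics are then routine Schur--Weyl and Mackey theory.
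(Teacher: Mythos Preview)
Your argument for part~(2) is correct and follows essentially the same route as the paper's Propositions~\ref{prop:2}, \ref{prop:3} and Corollary~\ref{cor:3}: decompose $H_{\Q}^{*}\otimes L^{\lambda}$ via the tensor product formula (the paper's Corollary~\ref{cor:dtp}), then use Frobenius reciprocity and Mackey's theorem for the pair $(\mf{S}_{k},\Cyc_{k+1})$ inside $\mf{S}_{k+1}$---with a single double coset and trivial intersection---to identify the polynomial part with the regular representation of $\mf{S}_{k}$, hence with $H_{\Q}^{\otimes k}$. The only difference is organizational: the paper first passes through a Pieri-type reduction (Proposition~\ref{prop:2}, via $H_{\Q}^{*}\cong\det^{-1}\otimes L^{(1^{n-1})}$) before specializing to partitions of $k$, whereas you apply Corollary~\ref{cor:dtp} from the outset; the Mackey/Frobenius core is identical.

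You do not, however, address part~(1). The decomposition of $\mathcal{C}_{n}^{\Q}(k)$ is a separate statement with its own proof (Proposition~\ref{prop:1}): via Schur--Weyl one has $[L^{\lambda}:\mathcal{C}_{n}^{\Q}(k)]=\dim_{\Q}\pr(S^{\lambda})$, and the paper then extends scalars to $\K=\Q(\zeta_{k})$, splits $S^{\lambda}\otimes_{\Q}\K$ into $\Cyc_{k}$-isotypic components $V_{j}$, and checks directly that $\Ker(\pr)\otimes_{\Q}\K=\bigoplus_{j\neq 0}V_{j}$, so that $\pr(S^{\lambda})\cong V_{0}$ has dimension $[\triv_{k}:\Res^{\mf{S}_{k}}_{\Cyc_{k}}S^{\lambda}]$. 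This argument is short and elementary, but it is logically independent of the derivation-algebra computation and is missing from your proposal.
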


We remark that, as a $\mathrm{GL}(n,\Q)$-module, $\mathcal{C}_n^{\Q}(k)$ is isomorphic to the invariant part $a_n(k):=(H_{\Q}^{\otimes k})^{\Cyc_k}$ of $H_{\Q}^{\otimes k}$
by the action of $\Cyc_k$. Namely, the cokernel $\mathrm{Coker}(\tau_{k, \Q}')$ is isomorphic to Kontsevich's
$a_n(k)$ as a $\mathrm{GL}(n,\Q)$-module. In our notation $a_n(k)$ is considered for any $n \geq 2$ in constrast to Kontsevich's notation
for even $n=2g$. (See \cite{Kn1} and \cite{Kn2}.)
We also remark that the polynomial part $(H_{\Q})^{\otimes{k}}$ of $\mathrm{Der}^+(\mathcal{L}_{n,\Q})(k)$ is detected by a contraction map.
(See Subsection {\rmfamily \ref{Ss-Der}} for the definition of the contraction map.)
Using this theorem, for given $k \geq 1$, we can calculate the irreducible decomposition of $\mathrm{Im}(\tau_{n,\Q}')$ and $\mathrm{Coker}(\tau_{n,\Q}')$
for $n \geq k+2$.

\vspace{0.5em}

On the other hand, we also give the irreducible decompositions of the derivation algebras
of the Chen Lie algebra $\mathcal{L}_{n,\Q}^M$ and some free abelian by polynilpotent Lie algebra $\mathcal{L}_{n,\Q}^N$ generated by $H_{\Q}$.
(See Subsections {\rmfamily \ref{Ss-Chen}} and {\rmfamily \ref{Ss-satoh}} for the precise definition.)
They were studied in our previous papers \cite{S06} and \cite{S08} in order to investigate the cokernel of the Johnson homomorphisms $\tau_{k,\Q}'$.
In this paper, after tensoring with $\Q$, we determine the irreducible decompositions of the image of the Johnson homomorphism $\tau_k^M$
of the automorphism group of a free metabelian group, and of the image of the composition map $\tau_{k, N}'$ of $\tau_k'$ and a homomorphism
$H^* \otimes_{\Z} \mathcal{L}_n(k+1) \rightarrow H^* \otimes_{\Z} \mathcal{L}_n^N(k+1)$
induced from the natural projection $\mathcal{L}_n(k+1) \rightarrow \mathcal{L}_n^N(k+1)$. That is,
\begin{thrm}($=$ Propositions {\rmfamily \ref{prop:John^M}} and {\rmfamily \ref{prop:John^N}}.) 
\begin{enumerate}
\item[$(1)$] For any $k \geq 1$ and $n \geq k+2$,
\[ \mathrm{Im}(\tau_{k,\Q}^M) \cong L^{\{(k,1),(1)\}} \oplus L^{\{(k-1,1),0\}}. \]
\item[$(2)$] For any $k \geq 1$ and $n \geq k+2$,
{\small
\begin{eqnarray*}
  \mathrm{Im}((\tau_{k, N}')_{\Q})
    &\cong& 3L^{(k-1,1)} \oplus 2L^{(k-2,2)} \oplus 2L^{(k-2,1^2)} \oplus L^{(k-3,2,1)} \oplus L^{(k-3,1^3)} \\
    & & \oplus L^{\{(k,1);(1)\}} \oplus L^{\{(k-1,2);(1)\}}
   \oplus L^{\{(k-1,1^2);(1)\}} \oplus L^{\{(k-2,2,1);(1)\}} \oplus L^{\{(k-2,1^3);(1)\}}
\end{eqnarray*}}
\end{enumerate}
where $\tau_{k,\Q}^M:=\tau_{k}^M \otimes \mathrm{id}_{\Q}$ and $(\tau_{k, N}')_{\Q}:=\tau_{k, N}' \otimes \mathrm{id}_{\Q}$.
\end{thrm}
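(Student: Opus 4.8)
The plan is to prove both parts by the same two-step scheme. In each case the image in question is a $\mathrm{GL}(n,\Q)$-submodule of an explicit graded piece of a derivation algebra, namely $\mathrm{Der}^+(\mathcal{L}^M_{n,\Q})(k)\cong H^*_\Q\otimes\mathcal{L}^M_{n,\Q}(k+1)$ in part $(1)$ and $\mathrm{Der}^+(\mathcal{L}^N_{n,\Q})(k)\cong H^*_\Q\otimes\mathcal{L}^N_{n,\Q}(k+1)$ in part $(2)$, and since $\mathrm{GL}(n,\Q)$ is linearly reductive the image is the complement of the cokernel. So I would first decompose the ambient module into irreducibles, and then determine which of its constituents fall into the cokernel.

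For the first step I would combine two ingredients. One is the $\mathrm{GL}(n,\Q)$-structure of the degree $k+1$ part of the target Lie algebra: it is known that $\mathcal{L}^M_{n,\Q}(k+1)\cong L^{(k,1)}$ for the Chen Lie algebra, and, as in \cite{S08}, that $\mathcal{L}^N_{n,\Q}(k+1)$ is the multiplicity-free sum of $L^{(k,1)}$, $L^{(k-1,2)}$, $L^{(k-1,1^2)}$, $L^{(k-2,2,1)}$, $L^{(k-2,1^3)}$. The other is the mixed-tensor computation underlying Theorem \ref{Int_T-1}: for $n\geq k+2$ and a partition $\mu$ of $k+1$ one has $H^*_\Q\otimes L^\mu\cong L^{\{\mu;(1)\}}\oplus\bigoplus_\lambda L^\lambda$, the second sum running over the partitions $\lambda$ obtained from $\mu$ by deleting one box. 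Applying this summand by summand and collecting terms gives the ambient module: in $(1)$ the multiplicity-free module $L^{(k)}\oplus L^{(k-1,1)}\oplus L^{\{(k,1);(1)\}}$, and in $(2)$ the polynomial part $L^{(k)}\oplus 3L^{(k-1,1)}\oplus 2L^{(k-2,2)}\oplus 3L^{(k-2,1^2)}\oplus L^{(k-3,2,1)}\oplus L^{(k-3,1^3)}$ together with the mixed part $L^{\{(k,1);(1)\}}\oplus L^{\{(k-1,2);(1)\}}\oplus L^{\{(k-1,1^2);(1)\}}\oplus L^{\{(k-2,2,1);(1)\}}\oplus L^{\{(k-2,1^3);(1)\}}$.

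For the second step I would use that both homomorphisms reduce to the free case. By construction $\tau'_{k,N}=p^N_*\circ\tau'_k$, where $p^N_*\colon H^*_\Q\otimes\mathcal{L}_n(k+1)\to H^*_\Q\otimes\mathcal{L}^N_n(k+1)$ is induced by the projection $\mathcal{L}_n\to\mathcal{L}^N_n$, so $\mathrm{Im}((\tau'_{k,N})_\Q)=p^N_*(\mathrm{Im}(\tau'_{k,\Q}))$; and in the range $n\geq k+2$ the image $\mathrm{Im}(\tau^M_{k,\Q})$ is likewise the image of $\mathrm{Im}(\tau'_{k,\Q})$ under the corresponding projection $p_*$ onto $H^*_\Q\otimes\mathcal{L}^M_n(k+1)$ (compatibility of $\tau^M_k$ with $\tau'_k$; cf. \cite{S06}). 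By \cite{S11} and the remark following Theorem \ref{Int_T-1}, $\mathrm{Im}(\tau'_{k,\Q})=\mathrm{Der}^+(\mathcal{L}_{n,\Q})(k)\ominus\mathcal{C}_n^\Q(k)$, and the key observation is that the cokernel $\mathcal{C}_n^\Q(k)\cong a_n(k)$ is \emph{polynomial}: it sits inside the $(H_\Q)^{\otimes k}$-summand of Theorem \ref{Int_T-1} $(2)$. Since $p_*$ and $p^N_*$ respect the splitting of $H^*_\Q\otimes(-)$ into its polynomial and mixed-tensor parts and are surjective, every mixed constituent $L^{\{\mu;(1)\}}$ of the ambient module already lies in the image, being hit from the mixed part of $\mathrm{Der}^+(\mathcal{L}_{n,\Q})(k)$, which lies entirely in $\mathrm{Im}(\tau'_{k,\Q})$. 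For the polynomial part, one pushes $(H_\Q)^{\otimes k}\ominus\mathcal{C}_n^\Q(k)$ forward, using $[L^\lambda:\mathcal{C}_n^\Q(k)]=[\triv_k:\Res_{\Cyc_k}^{\mf{S}_k}S^\lambda]$ from Theorem \ref{Int_T-1} $(1)$ to see which isotypic components of $(H_\Q)^{\otimes k}$ meet the cokernel, and the trace-map obstructions (Morita's trace and the exterior trace, cf. \cite{S06}, \cite{S08}) to locate the surviving copies. The outcome is that in $(1)$ the single copy of $L^{(k)}=S^k H_\Q$ stays in the cokernel while $L^{(k-1,1)}$ survives into the image, and in $(2)$ exactly $L^{(k)}$ and one copy of $L^{(k-2,1^2)}$ stay in the cokernel; subtracting these from the ambient module gives the two asserted decompositions.

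The main obstacle is the polynomial part of the second step in case $(2)$: sorting out the multiplicities $2$ and $3$ requires understanding, on each $L^\lambda$-isotypic component, the surjection $(H_\Q)^{\otimes k}\twoheadrightarrow$ (polynomial part of the target) relative to the subspace cut out by $\Cyc_k$-invariance, equivalently matching the branching of the Lie-module restriction from $\mf{S}_{k+1}$ to $\mf{S}_k$ against the trace-map obstructions, so as to pin down $\mathrm{Coker}((\tau'_{k,N})_\Q)\cong S^k H_\Q\oplus L^{(k-2,1^2)}_\Q$ and nothing more. Case $(1)$ is comparatively easy because its ambient module is multiplicity-free, so there it only remains to check that it is $S^k H_\Q$, not $L^{(k-1,1)}$, that persists in the cokernel, which follows since Morita's trace is nonzero on $S^k H_\Q$ whereas $L^{(k-1,1)}$ contributes nothing to $a_n(k)$.
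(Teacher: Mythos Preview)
Your first step---decomposing the ambient modules $H^*_\Q\otimes\mathcal{L}^M_{n,\Q}(k+1)$ and $H^*_\Q\otimes\mathcal{L}^N_{n,\Q}(k+1)$ via Corollary~\ref{cor:dtp}---is exactly what the paper does in Propositions~\ref{prop:Der^M} and~\ref{prop:Der^N}. The divergence is in the second step. The paper does not pass through the free case at all: it simply quotes the cokernel computations already established in \cite{S06} and \cite{S08}, namely $\mathrm{Coker}(\tau^M_{k,\Q})\cong S^kH_\Q$ and $\mathrm{Coker}((\tau'_{k,N})_\Q)\cong L^{(k)}\oplus L^{(k-2,1^2)}$, and then subtracts these from the ambient decompositions. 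That is the entire argument.

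Your route---pushing $\mathrm{Im}(\tau'_{k,\Q})=W\oplus\big((H_\Q)^{\otimes k}\ominus\mathcal{C}_n^\Q(k)\big)$ forward through $p_*$ or $p^N_*$---is a legitimate alternative, and for part~(2) it is even the natural one since $\tau'_{k,N}=p^N_*\circ\tau'_k$ by definition. But the ``main obstacle'' you flag at the end, namely tracking which copies of $L^{(k-2,1^2)}$ survive under the surjection from $(H_\Q)^{\otimes k}$ to the polynomial part of the target, is precisely the content of \cite{S08}; you would be rederiving that result rather than using it. For part~(1) your approach also works (the ambient module is multiplicity-free, and the inclusion $p_*(\mathrm{Im}(\tau'_{k,\Q}))\subset\mathrm{Im}(\tau^M_{k,\Q})$ plus Morita's trace squeeze the answer), but note that the identity $\mathrm{Im}(\tau^M_{k,\Q})=p_*(\mathrm{Im}(\tau'_{k,\Q}))$ you assert is not what \cite{S06} proves directly; you only need and only get the inclusion $\subset$, with equality falling out a posteriori. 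In short: your argument is correct but circuitous---the paper's is a two-line citation once the ambient decomposition is in hand.
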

For any irreducible $\mathrm{GL}(n,\Q)$-module $L^\lambda$ with highest weight $\lambda$, we see
\[ [L^\lambda : \mathrm{Im}(\tau_{k,\Q}^M)] \leq [L^\lambda : \mathrm{Im}((\tau_{k, N}')_{\Q})]
    \leq [L^\lambda : \mathrm{Im}(\tau_{k,\Q}')] \leq [L^\lambda : \mathrm{Im}(\tau_{k,\Q})]. \]
Hence we can regard $\mathrm{Im}(\tau_{k,\Q}^M)$ and $\mathrm{Im}((\tau_{k, N}')_{\Q})$ as lower bounds on $\mathrm{Im}(\tau_{k,\Q}')$ and
$\mathrm{Im}(\tau_{k,\Q})$.

\vspace{0.5em}

In the rest of the paper, we consider some applications of trace maps.
In general, the trace maps are used to study the cokernel of the Johnson homomorphisms. One of the most important trace maps is Morita's trace map
\[ \mathrm{Tr}_{[k]} := f_{[k]} \circ \Phi_{1}^k : H^* {\otimes}_{\Z} \mathcal{L}_n(k+1) \rightarrow S^k H. \]
Introducing this map, Morita showed that $S^k H_{\Q}$ appears in the irreducible decomposition of $\mathrm{Coker}(\tau_{k,\Q})$.
Using the part (1) of Theorem {\rmfamily \ref{Int_T-1}},
we see that the multiplicity of $S^k H_{\Q}$ in the irreducible decomposition
of each of $\mathrm{Coker}(\tau_{n,\Q}')$ and $\mathrm{Coker}(\tau_{n,\Q})$ is just one.
In Section {\rmfamily \ref{S-Chen}}, we determine the abelianization of the derivation algebra
$\mathrm{Der}^+(\mathcal{L}_n^M)$ of the Chen Lie algebra using Morita's trace maps. That is,
\begin{thrm}($=$ Theorem {\rmfamily \ref{T-ES_Chen}}.)
For $n \geq 4$, we have
\[ (\mathrm{Der}^+(\mathcal{L}_n^M))^{\mathrm{ab}} \cong (H^* \otimes_{\Z} \Lambda^2 H ) \oplus \bigoplus_{k \geq 2} S^k H. \]
More precisely, this isomorphism is given by the degree one part and Morita's trace maps $\mathrm{Tr}_{[k]}$.
\end{thrm}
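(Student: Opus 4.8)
The plan is to compute $(\mathrm{Der}^+(\mathcal{L}_n^M))^{\mathrm{ab}}$ degree by degree. Write $\mathfrak{g} := \mathrm{Der}^+(\mathcal{L}_n^M)$, a Lie algebra graded by $\mathfrak{g}(k) = H^* \otimes_{\Z} \mathcal{L}_n^M(k+1)$ for $k \geq 1$, and let $\mathfrak{g}'(k) := \sum_{i+j=k,\ i,j \geq 1}[\mathfrak{g}(i),\mathfrak{g}(j)]$ denote the degree-$k$ part of the derived ideal $[\mathfrak{g},\mathfrak{g}]$, so that $\mathfrak{g}^{\mathrm{ab}}(k) = \mathfrak{g}(k)/\mathfrak{g}'(k)$. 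Since $\mathfrak{g}$ is concentrated in positive degrees, $\mathfrak{g}'(1) = 0$, and hence $\mathfrak{g}^{\mathrm{ab}}(1) = \mathfrak{g}(1) = H^* \otimes_{\Z} \mathcal{L}_n^M(2) = H^* \otimes_{\Z} \Lambda^2 H$, which accounts for the first summand via the identity map. The whole statement therefore reduces to proving that for every $k \geq 2$ Morita's trace map $\mathrm{Tr}_{[k]} \colon \mathfrak{g}(k) = H^* \otimes_{\Z} \mathcal{L}_n^M(k+1) \to S^k H$ induces an isomorphism $\mathfrak{g}(k)/\mathfrak{g}'(k) \cong S^k H$; the asserted direct sum decomposition then follows because each $S^k H$ is a free $\Z$-module.

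For the first half of this I would use two properties of $\mathrm{Tr}_{[k]}$. First, $\mathrm{Tr}_{[k]}$ is surjective onto $S^k H$ — this is Morita's observation that $S^k H_{\Q}$ occurs in $\mathrm{Coker}(\tau_{k,\Q})$, sharpened by part (1) of Theorem \ref{Int_T-1} to multiplicity one. Second, and decisively, $\mathrm{Tr}_{[k]}$ annihilates brackets: for $D_1 \in \mathfrak{g}(i)$ and $D_2 \in \mathfrak{g}(j)$ with $i+j=k$ one has $\mathrm{Tr}_{[k]}([D_1,D_2]) = 0$, a ``trace of a commutator vanishes'' identity obtained by the same bookkeeping with the maps $\Phi_1^k$ and $f_{[k]}$ that gives $\mathrm{Tr}_{[k]} \circ \tau_k = 0$, now applied to the (metabelian) bracket of two derivations. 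Granting these, $\mathrm{Tr}_{[k]}$ descends to a surjection $\overline{\mathrm{Tr}}_{[k]} \colon \mathfrak{g}^{\mathrm{ab}}(k) \twoheadrightarrow S^k H$, and since $S^k H$ is $\Z$-free this surjection splits, so that $\mathfrak{g}^{\mathrm{ab}}(k) \cong S^k H \oplus K_k$ with $K_k := \ker \overline{\mathrm{Tr}}_{[k]}$.

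It remains to show $K_k = 0$, i.e. $\ker \mathrm{Tr}_{[k]} \subseteq \mathfrak{g}'(k)$, and this is the heart of the argument; I would do it first over $\Q$. From the irreducible decomposition of $H^*_{\Q} \otimes_{\Z} \mathcal{L}_{n,\Q}^M(k+1)$ — read off from the decomposition of $\mathcal{L}_{n,\Q}^M$ together with the $\mathrm{GL}(n,\Q)$ tensor-product theorem already employed in the earlier sections — one obtains the full list of irreducible constituents $L^\mu$ of $\mathfrak{g}(k)_{\Q}$, and $\ker \mathrm{Tr}_{[k],\Q}$ is obtained from it by deleting a single copy of $L^{(k)} = S^k H_{\Q}$. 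I would then exhibit each remaining constituent inside $\mathfrak{g}'(k)_{\Q}$: the constituents lying in $\mathrm{Im}(\tau_{k,\Q}^M)$ are automatically brackets of degree-one derivations, since $\bigoplus_k \mathrm{Im}(\tau_k^M)$ is the Lie subalgebra of $\mathfrak{g}$ generated by $\mathfrak{g}(1) = \mathrm{Im}(\tau_1^M)$; for each remaining constituent I would evaluate the $\mathrm{GL}(n,\Q)$-equivariant bracket maps $\mathfrak{g}(1) \otimes \mathfrak{g}(k-1) \to \mathfrak{g}(k)$, and if necessary $\mathfrak{g}(2) \otimes \mathfrak{g}(k-2) \to \mathfrak{g}(k)$, on highest-weight vectors and check that the relevant isotypic component does not vanish. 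The classical description of the free metabelian Lie algebra in terms of symmetric and exterior powers of $H$ (Chen; see also \cite{S06}) keeps all of this a finite, combinatorial computation. A dimension count then yields $\dim \mathfrak{g}^{\mathrm{ab}}(k)_{\Q} = \dim S^k H_{\Q}$, so $K_k$ is a finite group.

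To remove the torsion and conclude $K_k = 0$, I would show that $\mathfrak{g}'(k)$ is actually a $\Z$-module direct summand of $\mathfrak{g}(k)$: the brackets produced in the rational step can be chosen so that, over $\Z$, they span a submodule with torsion-free quotient (this is where Chen's model must be used with integral coefficients). Since $\ker \mathrm{Tr}_{[k]}$ is also a direct summand — it is the kernel of a surjection onto the free module $S^k H$ — contains $\mathfrak{g}'(k)$, and rationally coincides with it, the two submodules are equal, whence $K_k = 0$ and $\mathfrak{g}^{\mathrm{ab}}(k) \cong S^k H$ for all $k \geq 2$. I expect this last step to be the main obstacle: the rational identity $\ker \mathrm{Tr}_{[k],\Q} = \mathfrak{g}'(k)_{\Q}$ is a bounded representation-theoretic computation, but excluding torsion in $\mathfrak{g}(k)/\mathfrak{g}'(k)$ requires genuinely integral control of the bracket maps on the free metabelian Lie algebra.
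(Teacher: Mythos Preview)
Your overall strategy---surject onto $S^kH$ via the trace, then show the kernel is exactly the commutator ideal---matches the paper's framing, but your execution diverges from the paper in a way that creates real gaps.

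The paper's proof is entirely integral and combinatorial; it never passes to $\Q$ or invokes the irreducible decomposition of $\mathfrak g(k)_{\Q}$. Instead it uses four elementary lemmas (Lemmas~\ref{MY1}--\ref{MY4}, essentially drawn from \cite{S06}) that show, by explicit bracket computations in $\mathrm{Der}^+(\mathcal{L}_n^M)$, that specific elements $x_i^* \otimes [x_{i_1},\ldots,x_{i_{k+1}}]$ either vanish in the abelianization or coincide there with one another. These identities reduce the generating set of $(\mathrm{Der}^+(\mathcal{L}_n^M))^{\mathrm{ab}}(k)$ to the symbols $s(i_1,\ldots,i_k)$ with $1 \le i_1 \le \cdots \le i_k \le n$, exactly $\binom{n+k-1}{k}$ of them. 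Since $\mathrm{Tr}_{[k]}^M$ already surjects onto the free module $S^kH$ of that rank, the map is forced to be an isomorphism over $\Z$. No separate torsion argument is needed because the generator count is the upper bound.

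Your route has two genuine gaps. First, the claim that $\bigoplus_k \mathrm{Im}(\tau_k^M)$ is the Lie subalgebra of $\mathfrak g$ generated by $\mathfrak g(1)$ is an Andreadakis-type statement for the metabelian Johnson filtration (it says $\mathrm{gr}(\mathcal{A}_n^M)$ is generated in degree one) and cannot be asserted for free; it is of comparable depth to the theorem itself. Your fallback of checking the bracket maps on highest-weight vectors would work rationally---there are only the two constituents $L^{\{(k,1);(1)\}}$ and $L^{(k-1,1)}$ to hit---but you have not carried it out. Second, and more seriously, your torsion step is not a proof: saying that ``the brackets produced in the rational step can be chosen so that, over $\Z$, they span a submodule with torsion-free quotient'' is precisely the integral statement at issue, and making it rigorous would force you to produce explicit bracket identities essentially equivalent to Lemmas~\ref{MY1}--\ref{MY4}. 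The paper's approach is cleaner exactly because it confronts the integrality from the outset rather than deferring it to a last step that, as you yourself anticipate, is the main obstacle.
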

We should remark that this result is the Chen Lie algebra version of Morita's conjecture:
\[ (\mathrm{Der}^+(\mathcal{L}_n))^{\mathrm{ab}} \cong (H^* \otimes_{\Z} \Lambda^2 H ) \oplus \bigoplus_{k \geq 2} S^k H \]
for the free Lie algebra $\mathcal{L}_n$ for any $n \geq 3$. (See also \cite{Mo3}.)

\vspace{0.5em}

Next we consider another important trace map
\[ \mathrm{Tr}_{[1^k]} := f_{[1^k]} \circ \Phi_{1}^k : H^* {\otimes}_{\Z} \mathcal{L}_n(k+1) \rightarrow {\Lambda}^k H, \]
called the trace map for the exterior product $\Lambda^k H$.
In \cite{S03}, we show that $\Lambda^k H_{\Q}$ appears in $\mathrm{Coker}(\tau_{k,\Q}')$ for odd $k$ and $3 \leq k \leq n$, and
determine $\mathrm{Coker}(\tau_{3,\Q})$ using $\mathrm{Tr}_{[3]}$ and $\mathrm{Tr}_{[1^3]}$.
In Section {\rmfamily \ref{S-Coh}}, we prove that the trace map $\mathrm{Tr}_{[1^k]}$ defines a non-trivial twisted second cohomology class
of the automorphism group $\mathrm{Aut}\,N_{n,k}$ of a free nilpotent group $N_{n,k}:=F_n/\Gamma_n(k+1)$ with coefficients in
$\Lambda^k H_{\Q}$ for any $k \geq 2$ and $n \geq k$.
To show this, we prove that $H^1(\mathrm{Aut}\,N_{n,k}, \Lambda^k H_{\Q})$ is trivial. Then we consider the cohomological five term
exact sequence of a group extension
\[ 0 \rightarrow \mathrm{Hom}_{\Z}(H, \mathcal{L}_n(k+1)) \rightarrow \mathrm{Aut}\, N_{n,k+1} \rightarrow \mathrm{Aut}\,N_{n,k} \rightarrow 1 \]
introduced by Andreadakis \cite{And}. (See also Proposition 2.3 in Morita's paper \cite{Mo4}.)

\vspace{0.5em}

On the other hand, let $T_{n,k}$ be the image of a natural homomorphism $\mathrm{Aut}\,F_n \rightarrow \mathrm{Aut}\,N_{n,k}$ induced from
the projection $F_n \rightarrow N_{n,k}$. The group $T_{n,k}$ is called the tame automorphism group of $N_{n,k}$. Similarly to $\mathrm{Aut}\,N_{n,k}$,
observing the cohomological five term exact sequence of a group extension
\[ 0 \rightarrow \mathrm{gr}^k(\mathcal{A}_n) \rightarrow T_{n,k+1} \rightarrow T_{n,k} \rightarrow 1, \]
we show that the $\mathrm{GL}(n,\Z)$-equivariant homomorphism $\mathrm{Tr}_{[1^k]} \circ \tau_k$ defines a non-trivial twisted second cohomology class
of $T_{n,k}$.
Namely, the main purpose in Section {\rmfamily \ref{S-Coh}} is to show
\begin{thrm}($=$ Propositions {\rmfamily \ref{T-ES_TAut}} and {\rmfamily \ref{T-ES_NAut}}.)
\begin{enumerate}
\item[$(1)$] $0 \neq \mathrm{tg}(\mathrm{Tr}_{[1^k]} \circ \tau_{k}) \in  H^2(T_{n,k}, \Lambda^k H_{\Q})$
for even $k$ and $2 \leq k \leq n$,
\item[$(2)$] $0 \neq \mathrm{tg}(\mathrm{Tr}_{[1^k]}) \in H^2(\mathrm{Aut}\,N_{n,k}, \Lambda^k H_{\Q})$ for $k \geq 3$ and $n \geq k$
\end{enumerate}
where $\mathrm{tg}$ means the transgression map.
\end{thrm}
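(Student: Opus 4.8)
\emph{Proof proposal.} The plan is to deduce both parts from a single mechanism. Suppose $1\to A\to G\to Q\to1$ is a group extension with $A$ abelian, with the conjugation action of $Q$ on $A$ factoring through $\rho\colon Q\to\mathrm{GL}(n,\Z)$, and with $A$ acting trivially on $\Lambda^{k}H_{\Q}$ (viewed as a $G$-module through $G\to Q\xrightarrow{\rho}\mathrm{GL}(n,\Z)$). The five term exact sequence of the extension contains
\[
H^{1}(G,\Lambda^{k}H_{\Q})\xrightarrow{\ \mathrm{res}\ }\Hom_{\mathrm{GL}(n,\Z)}(A\otimes_{\Z}\Q,\ \Lambda^{k}H_{\Q})\xrightarrow{\ \mathrm{tg}\ }H^{2}(Q,\Lambda^{k}H_{\Q}),
\]
and an equivariant homomorphism $\xi$ in the middle term satisfies $\mathrm{tg}(\xi)=0$ exactly when $\xi=c|_{A}$ for some crossed homomorphism $c\colon G\to\Lambda^{k}H_{\Q}$. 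If $N\trianglelefteq G$ contains $A$ and carries the trivial action on $\Lambda^{k}H_{\Q}$, then $c|_{N}$ is an ordinary homomorphism and vanishes on $[N,N]$; so, contrapositively, \emph{if $\xi$ does not vanish on $A\cap[N,N]$ for some such $N$, then $\mathrm{tg}(\xi)\neq0$}. This criterion is the only real input. (Borel's vanishing $H^{1}(\mathrm{GL}(n,\Z),\Lambda^{k}H_{\Q})=0$ propagates up the Johnson filtration — the multiplicities $[\Lambda^{k}H_{\Q}:\Hom_{\Z}(H,\mathcal{L}_{n}(j+1))\otimes_{\Z}\Q]$ vanishing for $j<k$ by Pieri's rule — giving $H^{1}(\mathrm{Aut}\,N_{n,k},\Lambda^{k}H_{\Q})=0$; this sharpens the description of $H^{2}$ but is not needed below.)

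For part (1) I would apply the criterion to the Andreadakis extension $1\to\mathrm{gr}^{k}(\mathcal{A}_{n})\to T_{n,k+1}\to T_{n,k}\to1$ with $\xi=\mathrm{Tr}_{[1^{k}]}\circ\tau_{k}$ (which is $\mathrm{GL}(n,\Z)$-equivariant, hence lies in the middle term), taking $N$ to be the image of $\mathrm{IA}_{n}$ in $T_{n,k+1}$. Since $k\geq2$, Bachmuth's equality $\mathcal{A}_{n}(2)=[\mathrm{IA}_{n},\mathrm{IA}_{n}]$ gives $\mathcal{A}_{n}(k)\subseteq[\mathrm{IA}_{n},\mathrm{IA}_{n}]$, hence $A=\mathrm{gr}^{k}(\mathcal{A}_{n})\subseteq[N,N]$ and $A\cap[N,N]=A$; the criterion thus reduces to proving $\mathrm{Tr}_{[1^{k}]}\circ\tau_{k}\neq0$ for even $k$. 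This is representation theory: by Theorem~\ref{Int_T-1}(1), $[\Lambda^{k}H_{\Q}:\mathcal{C}_{n}^{\Q}(k)]=[\triv_{k}:\Res_{\Cyc_{k}}^{\mathfrak{S}_{k}}S^{(1^{k})}]$, which vanishes when $k$ is even since a $k$-cycle then has sign $-1$; together with $\mathrm{Coker}(\tau_{k,\Q}')\cong\mathcal{C}_{n}^{\Q}(k)$ and $\mathrm{Im}(\tau_{k,\Q}')\subseteq\mathrm{Im}(\tau_{k,\Q})$ this shows $\Lambda^{k}H_{\Q}$ does not occur in $\mathrm{Coker}(\tau_{k,\Q})$, whereas by Theorem~\ref{Int_T-1}(2) it occurs with multiplicity exactly one in $\mathrm{Der}^{+}(\mathcal{L}_{n,\Q})(k)$ (inside the summand $(H_{\Q})^{\otimes k}$, never inside a $\det^{-1}\otimes L^{\mu}$). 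Hence the unique $\Lambda^{k}H_{\Q}$-isotypic summand of $\mathrm{Der}^{+}(\mathcal{L}_{n,\Q})(k)$ already lies in $\mathrm{Im}(\tau_{k,\Q})$; since $\mathrm{Tr}_{[1^{k}]}$ is a nonzero $\mathrm{GL}(n,\Z)$-map to $\Lambda^{k}H_{\Q}$ it is, up to a nonzero scalar, the projection onto that summand, so $\mathrm{Tr}_{[1^{k}]}\circ\tau_{k}\neq0$.

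For part (2) I would apply the criterion to the Andreadakis extension $1\to\Hom_{\Z}(H,\mathcal{L}_{n}(k+1))\to\mathrm{Aut}\,N_{n,k+1}\to\mathrm{Aut}\,N_{n,k}\to1$ with $\xi=\mathrm{Tr}_{[1^{k}]}$, taking $N=\Ker(\mathrm{Aut}\,N_{n,k+1}\to\mathrm{GL}(n,\Z))$, which contains $A=\Hom_{\Z}(H,\mathcal{L}_{n}(k+1))$ and acts trivially on $\Lambda^{k}H_{\Q}$. The filtration $N=\mathcal{A}_{n,k+1}(1)\supseteq\cdots\supseteq\mathcal{A}_{n,k+1}(k)=A\supseteq\mathcal{A}_{n,k+1}(k+1)=1$ given by the kernels of $\mathrm{Aut}\,N_{n,k+1}\to\mathrm{Aut}\,N_{n,j}$ is central, with graded quotients $\mathrm{gr}^{j}\cong\mathrm{Der}^{+}(\mathcal{L}_{n})(j)=\Hom_{\Z}(H,\mathcal{L}_{n}(j+1))$ and induced bracket equal to the bracket of $\mathrm{Der}^{+}(\mathcal{L}_{n})$; lifting Lie brackets $[D_{i},D_{j}]$ with $i+j=k$ to commutators in $N$ then gives $[\mathrm{Der}^{+}(\mathcal{L}_{n}),\mathrm{Der}^{+}(\mathcal{L}_{n})](k)\subseteq A\cap[N,N]$. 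After $\otimes\Q$ the criterion thus reduces the problem, for $k\geq3$ and $n\geq k$, to showing that $\mathrm{Tr}_{[1^{k}]}$ does not vanish identically on $[\mathrm{Der}^{+}(\mathcal{L}_{n,\Q}),\mathrm{Der}^{+}(\mathcal{L}_{n,\Q})](k)$; as $[\Lambda^{k}H_{\Q}:\mathrm{Der}^{+}(\mathcal{L}_{n,\Q})(k)]=1$, this is the same as saying that $\Lambda^{k}H_{\Q}$ does not survive to the degree $k$ part of $(\mathrm{Der}^{+}(\mathcal{L}_{n}))^{\mathrm{ab}}$ — in contrast with $S^{k}H_{\Q}$, which does. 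I would establish it by exhibiting explicit decomposable derivations detected by $\mathrm{Tr}_{[1^{k}]}$, e.g. brackets $[D,D']$ with $D\in\mathrm{Der}^{+}(\mathcal{L}_{n})(1)$, $D'\in\mathrm{Der}^{+}(\mathcal{L}_{n})(k-1)$, chosen so that, after unwinding $\mathrm{Tr}_{[1^{k}]}=f_{[1^{k}]}\circ\Phi_{1}^{k}$ (contract $H^{*}$ against the first tensor factor, then antisymmetrize), the output is a nonzero element of $\Lambda^{k}H_{\Q}$.

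The main obstacle is exactly this final computation: producing decomposable derivations on which $\mathrm{Tr}_{[1^{k}]}$ is nonzero, and controlling the cancellations in the antisymmetrization so that nonvanishing holds precisely in the range $k\geq3$, $n\geq k$ (the same parity and size constraints that govern part (1) resurface here). Everything before it — the five term sequence, the restriction-to-$\mathrm{IA}$ argument, the identification of the Johnson Lie algebra of $\mathrm{Aut}\,N_{n,k+1}$, and the reduction of part (1) to Theorem~\ref{Int_T-1} and the sign of a $k$-cycle — is formal or already available.
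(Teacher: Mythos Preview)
Your criterion --- $\mathrm{tg}(\xi)\neq 0$ whenever $\xi$ fails to vanish on $A\cap[N,N]$ for some normal $N\supseteq A$ acting trivially --- is correct, and for part~(1) it gives a legitimate alternative to the paper's argument (with the caveat that your use of $\mathrm{Coker}(\tau_{k,\Q}')\cong\mathcal{C}_n^{\Q}(k)$ is only available for $n\geq k+2$, whereas the claim is for $n\geq k$; the paper avoids this by quoting the surjectivity of $\mathrm{Tr}_{[1^k]}\circ\tau_k$ for even $k\leq n$ directly from \cite{S03}).

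The genuine gap is part~(2). Your reduction asks that $\mathrm{Tr}_{[1^k]}$ not vanish on $[\mathrm{Der}^+(\mathcal{L}_{n,\Q}),\mathrm{Der}^+(\mathcal{L}_{n,\Q})](k)$, i.e.\ that $\Lambda^k H_{\Q}$ does not survive to $(\mathrm{Der}^+(\mathcal{L}_{n,\Q}))^{\mathrm{ab}}(k)$. That is precisely a fragment of Morita's conjecture for the \emph{free} Lie algebra, which the paper explicitly flags as open (only the Chen case $\mathcal{L}_n^M$ is settled here, in Theorem~\ref{T-ES_Chen}). You acknowledge this as the main obstacle and propose to exhibit explicit brackets detected by $\mathrm{Tr}_{[1^k]}$, but this is not carried out and is not routine. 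The paper bypasses the issue entirely via the route you set aside in your parenthetical: it proves $H^1(\mathrm{Aut}\,N_{n,k},\Lambda^l H_{\Q})=0$ for $l\geq 3$ (Proposition~\ref{P-HAN1}) by a short cocycle computation on the Bryant--Gupta generators $P,Q,S,U,\theta$, and similarly $H^1(T_{n,k},\Lambda^l H_{\Q})=0$ for $l\geq 2$ (Lemma~\ref{L-TAut}) by pulling back from $\mathrm{Aut}\,F_n$. Once $H^1$ vanishes on both ends of the extension the transgression is injective, so one only needs $\xi\neq 0$ --- immediate since $\mathrm{Tr}_{[1^k]}$ is surjective. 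In short, the $H^1=0$ computation is not a mere sharpening but the key step: it replaces your hard ``nonvanishing on commutators'' question by the trivial ``nonzero'' one.
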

In Section {\rmfamily \ref{S-Coh}},
We also show that $H^1(T_{n,k}, H)=\Z$ and it is generated by the Morita's crossed homomorphism. (See Proposition {\rmfamily \ref{P-TAut}}.)

\tableofcontents

\section{Preliminaries}\label{S-Pre}

In this section, after fixing notation and conventions,
we briefly recall some facts of the automorphism group of a free group, the free Lie algebra, the Chen Lie algebra
and the automorphism group of a free nilpotent group.

\subsection{Notation and conventions}\label{Ss-Not}
\hspace*{\fill}\ 

Throughout the paper, we use the following notation and conventions. Let $G$ be a group and $N$ a normal subgroup of $G$.
\begin{itemize}
\item The abelianization of $G$ is denoted by $G^{\mathrm{ab}}$. Namely, $G^{\mathrm{ab}}=H_1(G,\Z)$. Similarly, for any Lie algebra $\mathcal{G}$,
      we denote by $\mathcal{G}^{\mathrm{ab}}$ the abelianization of $\mathcal{G}$ as a Lie algebra.
\item The automorphism group $\mathrm{Aut}\,G$ of $G$ acts on $G$ from the right unless otherwise noted.
      For any $\sigma \in \mathrm{Aut}\,G$ and $x \in G$, the action of $\sigma$ on $x$ is denoted by $x^{\sigma}$.
\item For an element $g \in G$, we also denote the coset class of $g$ by $g \in G/N$ if there is no confusion.
\item For elements $x$ and $y$ of $G$, the commutator bracket $[x,y]$ of $x$ and $y$
      is defined to be $[x,y]:=xyx^{-1}y^{-1}$.
\item For elements $g_1, \ldots, g_k \in G$, a commutator of weight $k$ of the type
\[ [[ \cdots [[ g_{1},g_{2}],g_{3}], \cdots ], g_{k}] \]
with all of its brackets to the left of all the elements occurring is called a simple $k$-fold
commutator, and is denoted by $[g_{i_1},g_{i_2}, \cdots, g_{i_k}]$.
\item For any $\Z$-module $M$ and a commutative ring $R$, we denote $M \otimes_{\Z} R$ by the symbol obtained by attaching a subscript $R$ to $M$, like
      $M_{R}$ or $M^{R}$. Similarly, for any $\Z$-linear map $f: A \rightarrow B$,
      the induced $R$-linear map $A_{R} \rightarrow B_{R}$ is denoted by $f_{R}$
      or $f^{R}$.
\end{itemize}

\subsection{Automorphism group of a free group and its subgroups}\label{Ss-Aut}
\hspace*{\fill}\ 

Here we review some properties of the automorphism group of a free group.
To begin with, we recall the Nielsen's finite presentation for $\mathrm{Aut}\,F_n$.
In this paper, we fix a basis $x_1, \ldots , x_n$ of a free group $F_n$ of rank $n$.
Let $P$, $Q$, $S$ and $U$ be automorphisms of $F_n$ defined as follows:
\begin{center}
\begin{tabular}{c|c|c|c|c|c|c} \hline
           & $x_1$      & $x_2$ & $x_3$ & $\cdots$ & $x_{n-1}$ & $x_n$ \\ \hline
  $P$      & $x_2$      & $x_1$ & $x_3$ & $\cdots$ & $x_{n-1}$ & $x_n$ \\ 
  $Q$      & $x_2$      & $x_3$ & $x_4$ & $\cdots$ & $x_{n}$   & $x_1$ \\ 
  $S$      & $x_1^{-1}$ & $x_2$ & $x_3$ & $\cdots$ & $x_{n-1}$ & $x_n$ \\ 
  $U$      & $x_1 x_2$  & $x_2$ & $x_3$ & $\cdots$ & $x_{n-1}$ & $x_n$ \\ \hline
\end{tabular}
\end{center}
Namely, $P$ is an automorphism induced from the permutation of $x_1$ and $x_2$, $Q$ is induced from the cyclic permutation of the basis, and so on.
Nielsen \cite{Ni1} obtained the first finite presentation for $\mathrm{Aut}\,F_n$ in 1924.
\begin{thm}[Nielsen \cite{Ni1}]
For $n \geq 3$, $\mathrm{Aut}\,F_{n}$ has a finite presentation with generators $P$, $Q$, $S$ and $U$
subject to relations: \\
\hspace{2em} {\bf{(R1)}}: $P^2=1$, \\
\hspace{2em} {\bf{(R2)}}: $(QP)^{n-1}=Q^n=1$, \\
\hspace{2em} {\bf{(R3)}}: $[P, Q^{-i} P Q^i]=1$, \hspace{1em} $(2 \leq i \leq [n/2])$, \\
\hspace{2em} {\bf{(R4)}}: $S^2=1$, \\
\hspace{2em} {\bf{(R5)}}: $[S, Q^{-1} P Q] = [S, QP] = 1$, \\
\hspace{2em} {\bf{(R6)}}: $(P S)^4 = (PSPU)^2=1$, \\
\hspace{2em} {\bf{(R7)}}: $[U, Q^{-2}PQ^2] = [U, Q^{-2}UQ^2]= 1$, \hspace{0.5em} $(n \geq 4)$, \\
\hspace{2em} {\bf{(R8)}}: $[U, Q^{-2} S Q^2] = [U, S U S] = 1$, \\
\hspace{2em} {\bf{(R9)}}: $[U, QPQ^{-1}PQ] = [U, PQ^{-1}S U S QP]=1$, \\
\hspace{2em} {\bf{(R10)}}: $[U, PQ^{-1}PQPUPQ^{-1}PQP] =1$, \\
\hspace{2em} {\bf{(R11)}}: $PUPSU =USPS$, \\
\hspace{2em} {\bf{(R12)}}: $(PQ^{-1}UQ)^2 UQ^{-1} U^{-1}QU^{-1} = 1$. 
\end{thm}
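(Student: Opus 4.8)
The plan is to establish the presentation in three stages: (i) that $P$, $Q$, $S$, $U$ generate $\mathrm{Aut}\,F_n$; (ii) that the relations \textbf{(R1)}--\textbf{(R12)} hold among them; and (iii), the substantive point, that these relations suffice — i.e. that the abstractly presented group $\mathcal{N}_n$ with generators $P,Q,S,U$ and relators \textbf{(R1)}--\textbf{(R12)} maps isomorphically onto $\mathrm{Aut}\,F_n$ under the evident epimorphism $\phi_n$. Stages (i) and (ii) are the easy directions; stage (iii) is where the real work lies.

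For generation I would invoke Nielsen's reduction theory. Any $\sigma\in\mathrm{Aut}\,F_n$ carries the ordered basis $(x_1,\dots,x_n)$ to another ordered basis, and Nielsen's reduction process transforms any ordered basis back to $(x_1,\dots,x_n)$ through finitely many elementary Nielsen transformations: transposition of two entries, inversion of one entry, and elementary left or right multiplications $x_i\mapsto x_j^{\pm1}x_i$ and $x_i\mapsto x_ix_j^{\pm1}$. It therefore suffices to express each elementary transformation in $\langle P,Q,S,U\rangle$: the transposition $P$ and the $n$-cycle $Q$ together generate all permutations of the basis; conjugating $S$ by elements of $\langle P,Q\rangle$ inverts any single basis element; and conjugating $U$ by elements of $\langle P,Q,S\rangle$ produces every elementary multiplication. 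Hence $\phi_n$ is onto.

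That \textbf{(R1)}--\textbf{(R12)} hold is a finite, mechanical verification — one evaluates both sides of each relation on $x_1,\dots,x_n$ using the defining table. Conceptually, \textbf{(R1)}--\textbf{(R3)} present $\langle P,Q\rangle$ as the symmetric group $\mathfrak{S}_n$ (with $P$ a transposition and $QP$ an $(n-1)$-cycle fixing $x_1$), adjoining $S$ subject to \textbf{(R4)}, \textbf{(R5)} and the relation $(PS)^4=1$ from \textbf{(R6)} enlarges this to the group of signed permutations of the basis, and the remaining relations record how the transvection $U$ interacts with this finite subgroup.

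The genuine obstacle is stage (iii), injectivity of $\phi_n$. I would proceed by induction on $n$. Inside $\mathcal{N}_n$ one first exhibits a subgroup $\mathcal{N}_{n-1}$, generated by suitable words in $P,Q,S,U$, which maps onto the copy of $\mathrm{Aut}\,F_{n-1}$ in $\mathrm{Aut}\,F_n$ acting on $x_1,\dots,x_{n-1}$, and checks that \textbf{(R1)}--\textbf{(R12)} force these words to satisfy the rank $n-1$ Nielsen relations, so that by the inductive hypothesis $\phi_n$ restricts to an isomorphism $\mathcal{N}_{n-1}\xrightarrow{\sim}\mathrm{Aut}\,F_{n-1}$. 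Then one applies the Reidemeister--Schreier method to the inclusion $\mathcal{N}_{n-1}\le\mathcal{N}_n$ with a Schreier transversal supplied by Nielsen reduction — one whose representatives are automorphisms in a chosen normal form. The crux is to verify that every Reidemeister--Schreier relator of $\mathcal{N}_n$ relative to this transversal is a consequence of \textbf{(R1)}--\textbf{(R12)} together with the inductive presentation of $\mathcal{N}_{n-1}$; this bookkeeping — matching the rewriting of each elementary Nielsen move against the listed relations — is the technical heart of the proof and is in essence Nielsen's original combinatorial analysis. A small base case (say $n=3$) has to be dispatched by direct computation, since the inductive step needs enough ambient rank for the reduction moves to operate. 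Once injectivity holds for all $n\ge3$ the stated presentation follows; I expect this relator-matching bookkeeping in the inductive step to be by far the most delicate and error-prone part. (A conceptually different route, developed later by McCool, replaces it with peak reduction for Whitehead automorphisms acting on a complex of based graphs.)
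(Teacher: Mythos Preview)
The paper does not prove this theorem at all: it is stated as a classical result due to Nielsen \cite{Ni1} and used as a black box (for instance, in Section~\ref{S-Coh} to compute twisted first cohomology groups). There is therefore no ``paper's own proof'' against which to compare your proposal.

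As a standalone outline your three-stage plan is reasonable and correctly identifies where the difficulty lies. Stage (i) via Nielsen reduction and stage (ii) by direct evaluation are indeed routine. For stage (iii), however, your sketch is more of a declaration of intent than an argument: the inductive Reidemeister--Schreier scheme you describe is plausible in spirit, but the actual execution --- choosing the transversal, writing down the Schreier generators, and verifying that every rewritten relator is a consequence of \textbf{(R1)}--\textbf{(R12)} --- is a substantial combinatorial undertaking that you have not begun, and it is not clear that the induction organizes as cleanly as you suggest (Nielsen's original 1924 argument does not proceed by induction on $n$ in this way). Your parenthetical remark about McCool's peak-reduction approach is well taken and is in fact the route most modern expositions follow; if you were to pursue this seriously, that would likely be the cleaner path. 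In any case, since the paper treats the presentation as an input rather than something to be established, a citation is all that is required here.
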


In Section {\rmfamily \ref{S-Coh}}, we use the Nielsen's presentation to compute twisted first cohomology groups of $\mathrm{Aut}\,F_{n}$.
Let $X^{\pm1} := \{ x_1^{\pm1}, x_2^{\pm1}, \ldots , x_n^{\pm1} \} \subset F_n$ be a subset of all letters of $F_n$.
We denote by $\Omega_n$ a subgroup of $\mathrm{Aut}\,F_{n}$ consisting of all $\sigma \in \mathrm{Aut}\,F_n$ that effect
a permutation on $X^{\pm1}$. Then it is known that $\Omega_n$ is a finite group of order $2^n n!$ and generated by $P$, $Q$ and $S$. (See \cite{Ni1}.)
The subgroup $\Omega_n$ is called the extended symmetric group of degree $n$.

\vspace{0.5em}

Next we consider the natural projection induced from the abelianization of $F_n$.
Let $H:=F_n^{\mathrm{ab}}$ be the abelianization of $F_n$ and $\rho : \mathrm{Aut}\,F_n \rightarrow \mathrm{Aut}\,H$ the natural homomorphism induced from
the abelianization of $F_n \rightarrow H$. Throughout the paper, we identify $\mathrm{Aut}\,H$ with the general linear group $\mathrm{GL}(n,\Z)$ by
fixing a basis of $H$ induced from the basis $x_1, \ldots , x_n$ of $F_n$.
Using the Nielsen's presentation, we easily see that $\rho$ is surjective.
For any $\sigma \in \mathrm{Aut}\,F_n$, we also denote $\rho(\sigma) \in \mathrm{GL}(n,\Z)$ by $\sigma$
if there is no confusion. With this notation, $P$, $Q$, $S$ and $U$ generate $\mathrm{GL}(n,\Z)$. In particular, it is known that
\begin{thm}[Magnus \cite{Mag}]\label{T-Gen} (See also Section 7.3 in \cite{CoM}.)
For $n \geq 3$, the group $\mathrm{GL}(n,\Z)$ has a finite presentation with generators $P$, $Q$, $S$ and $U$
subject to relations {\bf{(R1)}}, $\ldots$, {\bf{(R12)}} and \\
\hspace{2em} {\bf{(R13)}}: $(SU)^2=1$. 
\end{thm}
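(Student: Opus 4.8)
The plan is to deduce the presentation from Nielsen's presentation of $\mathrm{Aut}\,F_n$ (recalled above) together with the surjection $\rho\colon\mathrm{Aut}\,F_n\twoheadrightarrow\mathrm{GL}(n,\Z)$ induced by abelianization, whose kernel is $\mathrm{IA}_n$. First I would verify that the extra relator $(SU)^2$ really does lie in $\ker\rho$: a direct matrix computation shows that $\rho(SU)$ sends $e_1\mapsto -e_1-e_2$ and fixes $e_2,\dots,e_n$, so $\rho((SU)^2)=1$; moreover, computing in $\mathrm{Aut}\,F_n$ itself one finds that $(SU)^2$ is the IA-automorphism $K_{12}$ with $x_1\mapsto x_2^{-1}x_1x_2$ and $x_j\mapsto x_j$ for $j\ge2$. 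Hence $\rho$ factors as $\mathrm{Aut}\,F_n\xrightarrow{q}G\xrightarrow{\bar\rho}\mathrm{GL}(n,\Z)$, where $G$ is the group of the asserted presentation and $q$ is the quotient by the normal closure $N:=\langle\langle(SU)^2\rangle\rangle$ in $\mathrm{Aut}\,F_n$. Since $\rho$ is surjective, so is $\bar\rho$, and since $(SU)^2\in\mathrm{IA}_n\trianglelefteq\mathrm{Aut}\,F_n$ we have $N\subseteq\mathrm{IA}_n$. Therefore the theorem is equivalent to the reverse inclusion: $\mathrm{IA}_n\subseteq N$, i.e. $\mathrm{IA}_n$ is normally generated in $\mathrm{Aut}\,F_n$ by the single element $(SU)^2$.

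To establish this I would invoke Magnus' theorem that, for $n\ge3$, $\mathrm{IA}_n$ is generated by the automorphisms $K_{ab}\colon x_a\mapsto x_b^{-1}x_ax_b$ (for $a\ne b$) and $K_{abc}\colon x_a\mapsto x_a[x_b,x_c]$ (for distinct $a,b,c$), the remaining basis elements being fixed, and show each of these lies in $N$. The $K_{ab}$ are immediate: the extended symmetric group $\Omega_n=\langle P,Q,S\rangle$ realizes every signed permutation of the basis, so an appropriate $\Omega_n$-conjugate of $(SU)^2=K_{12}$ equals $K_{ab}$ (or $K_{ab}^{-1}$), which lies in the normal subgroup $N$. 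For the $K_{abc}$ I would use a short explicit computation: conjugating $K_{12}$ by the transvection $x_1\mapsto x_1x_3$ (itself an $\Omega_n$-conjugate of $U$) and then multiplying by $K_{12}^{-1}$ produces the automorphism $x_1\mapsto x_1[x_2^{-1},x_3]$; conjugating this in turn by the sign change $x_2\mapsto x_2^{-1}$, which lies in $\Omega_n$, yields precisely $K_{123}$. An $\Omega_n$-conjugation then gives every $K_{abc}$, using $n\ge3$ to have three distinct indices at our disposal. Thus all Magnus generators of $\mathrm{IA}_n$ lie in $N$, so $\mathrm{IA}_n\subseteq N$, and $\bar\rho\colon G\to\mathrm{GL}(n,\Z)$ is an isomorphism.

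The delicate point here is not the conjugation bookkeeping, which is routine once the right/left action and commutator conventions are pinned down, but the appeal to Magnus' finite generation of $\mathrm{IA}_n$ by the $K_{ab}$ and $K_{abc}$; this is the genuine hard input, and if one wants a proof self-contained relative to the excerpt it is exactly what must be proved --- either by Magnus' original commutator calculus inside $\mathrm{IA}_n$, or by running the Reidemeister--Schreier rewriting process on Nielsen's presentation with respect to the subgroup $\mathrm{IA}_n$. An alternative route, closer to the treatment in Coxeter--Moser, avoids $\mathrm{IA}_n$ altogether: one first proves the classical presentation of $\mathrm{SL}(n,\Z)$ (Steinberg-type relations among elementary matrices plus one further relation, via the Euclidean algorithm and induction on $n$), promotes it to a presentation of $\mathrm{GL}(n,\Z)=\mathrm{SL}(n,\Z)\rtimes\langle\mathrm{diag}(-1,1,\dots,1)\rangle$, and then checks by Tietze transformations that it agrees with the presentation on generators $P,Q,S,U$; there the work is concentrated in the induction and the change of generating set.
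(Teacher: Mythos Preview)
The paper does not prove this theorem; it is stated as a classical result due to Magnus, with a reference to Coxeter--Moser for details. So there is no ``paper's own proof'' to compare against.

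That said, your proposal is a correct and natural proof outline, and it uses precisely the ingredients the paper has already put on the table: Nielsen's presentation of $\mathrm{Aut}\,F_n$, the surjection $\rho$ with kernel $\mathrm{IA}_n$, and Magnus' generating set $\{K_{ij},K_{ijl}\}$ for $\mathrm{IA}_n$. Your computation that $(SU)^2=K_{12}$ is correct under the paper's right-action convention, and the conjugation argument producing $K_{123}$ from $K_{12}$ via a transvection and a sign change is fine. You are also right to flag that the real content is Magnus' finite generation of $\mathrm{IA}_n$; once that is granted (as it is, by citation, in this paper), the reduction to a single normal generator is elementary bookkeeping. Your alternative route through a Steinberg-type presentation of $\mathrm{SL}(n,\Z)$ plus Tietze moves is closer to the Coxeter--Moser treatment the paper cites, but either path is acceptable here.
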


\vspace{0.3em}

Now the kernel $\mathrm{IA}_n$ of $\rho$ is called the IA-automorphism group of $F_n$.
Magnus \cite{Mag} showed that for any $n \geq 3$, $\mathrm{IA}_n$ is finitely generated by automorphisms
\[ K_{ij} : x_t \mapsto \begin{cases}
               {x_j}^{-1} x_i x_j, & t=i, \\
               x_t,                & t \neq i
              \end{cases}\]
for distinct $1 \leq i, \, j \leq n$, and
\[  K_{ijl} : x_t \mapsto \begin{cases}
               x_i [x_j, x_l], & t=i, \\
               x_t,            & t \neq i
              \end{cases}\] 
for distinct $1 \leq i, \, j, \, l \leq n$ and $j<l$.
Recently, Cohen-Pakianathan \cite{Co1, Co2}，Farb \cite{Far} and Kawazumi \cite{Kaw} independently showed
\begin{equation}\label{CPFK}
\mathrm{IA}_n^{\mathrm{ab}} \cong H^* \otimes_{\Z} \Lambda^2 H
\end{equation}
as a $\mathrm{GL}(n,\Z)$-module where $H^*:= \mathrm{Hom}_{\Z}(H,\Z)$ is the $\Z$-linear dual group of $H$.
In particular, from their result, we see that $\mathrm{IA}_n^{\mathrm{ab}}$ is a free abelian group of rank $2n^2(n-1)$ with basis
the coset classes of the Magnus generators $K_{ij}$ and $K_{ijl}$.

\vspace{0.5em}

We denote by $\overline{\Omega}_n$ the image of $\Omega_n$ by the natural projection $\rho$. Since $\mathrm{IA}_n$ is torsion free,
$\overline{\Omega}_n$ is isomorphic to $\Omega_n$. Namely, $\overline{\Omega}_n$ is a finite group of order $2^n n!$ generated by $P$, $Q$ and $S$.

\subsection{Free Lie algebra $\mathcal{L}_n$ and its derivations}\label{Ss-Der}
\hspace*{\fill}\ 

In this subsection, we recall the free Lie algebra generated by $H$, and its derivation algebra.
Let $\Gamma_n(1) \supset \Gamma_n(2) \supset \cdots$ be the lower central series of a free group $F_n$ defined by the rule
\[ \Gamma_n(1):= F_n, \hspace{1em} \Gamma_n(k) := [\Gamma_n(k-1),F_n], \hspace{1em} k \geq 2. \]
We denote by $\mathcal{L}_n(k) := \Gamma_n(k)/\Gamma_n(k+1)$ the $k$-th graded quotient of the lower central series of $F_n$,
and by $\mathcal{L}_n := {\bigoplus}_{k \geq 1} \mathcal{L}_n(k)$ the associated graded sum.
It is classically well known due to Witt \cite{Wit} that each $\mathcal{L}_n(k)$ is a free abelian group of rank
\begin{equation}\label{ex-witt}
 \mathrm{rank}_{\Z}(\mathcal{L}_n(k))=\frac{1}{k} \sum_{d | k} \Mob(d) n^{\frac{k}{d}}
\end{equation}
where $\Mob$ is the M$\ddot{\mathrm{o}}$bius function.
The graded sum $\mathcal{L}_n$ naturally has a graded Lie algebra structure induced from
the commutator bracket on $F_n$, and called the free Lie algebra generated by $H$.
(See \cite{Reu} for basic material concerning the free Lie algebra.)
For each $k \geq 1$, $\mathrm{Aut}\,F_n$ naturally acts on $\mathcal{L}_n(k)$.
Since the action of $\mathrm{IA}_n$ on $\mathcal{L}_n(k)$ is trivial,
that of $\mathrm{GL}(n,\Z) = \mathrm{Aut}\,F_n/\mathrm{IA}_n$ on $\mathcal{L}_n(k)$ is well-defined.

\vspace{0.5em}

Next, we consider an embedding of the free Lie algebra into the tensor algebra.
Let
\[ T(H):= \Z \oplus H \oplus H^{\otimes 2} \oplus \cdots \]
be the tensor algebra of $H$ over $\Z$. Then $T(H)$ is the
universal enveloping algebra of the free Lie algebra $\mathcal{L}_n$, and the natural map
$\iota : \mathcal{L}_n \rightarrow T(H)$ defined by
\[ [X,Y] \mapsto X \otimes Y - Y \otimes X \]
for $X$, $Y \in \mathcal{L}_n$ 
is an injective graded Lie algebra homomorphism.
We denote by $\iota_k$ the homomorphism of degree $k$ part of $\iota$, and
consider $\mathcal{L}_n(k)$ as a submodule $H^{\otimes k}$ through $\iota_k$.

\vspace{0.5em}

Here, we recall the derivation algebra of the free Lie algebra.
Let $\mathrm{Der}(\mathcal{L}_n)$ be the graded Lie algebra of derivations of $\mathcal{L}_n$.
Namely,
\[ \mathrm{Der}(\mathcal{L}_n) := \{ f : \mathcal{L}_n \xrightarrow{\Z-\mathrm{linear}} \mathcal{L}_n \,|\, f([a,b]) = [f(a),b]+ [a,f(b)], \,\,\,
   a, b \in \mathcal{L}_n \}. \]
For $k \geq 0$, the degree $k$ part of $\mathrm{Der}(\mathcal{L}_n)$ is defined to be
\[ \mathrm{Der}(\mathcal{L}_n)(k) := \{ f \in \mathrm{Der}(\mathcal{L}_n) \,|\, f(a) \in \mathcal{L}_n(k+1), \,\,\, a \in H \}. \]
Then, we have
\[ \mathrm{Der}(\mathcal{L}_n) = \bigoplus_{k \geq 0} \mathrm{Der}(\mathcal{L}_n)(k), \]
and can consider $\mathrm{Der}(\mathcal{L}_n)(k)$ as
\[ \mathrm{Hom}_{\Z}(H,\mathcal{L}_n(k+1)) = H^* {\otimes}_{\Z} \mathcal{L}_n(k+1) \]
for each $k \geq 1$ by the universality of the free Lie algebra.
Let $\mathrm{Der}^+(\mathcal{L}_n)$ be a graded Lie subalgebra of $\mathrm{Der}(\mathcal{L}_n)(k)$ with positive degree.
(See Section 8 of Chapter II in \cite{Bou}.)
Similarly, we define a graded Lie algebra $\mathrm{Der}^+(\mathcal{L}_{n,\Q})$ over $\Q$.
Then, we have $\mathrm{Der}^+(\mathcal{L}_{n,\Q}) = \mathrm{Der}^+(\mathcal{L}_n) \otimes_{\Z} \Q$.

\vspace{0.5em}

For $k \geq 1$,
let ${\varphi}^{k} : H^* {\otimes}_{\Z} H^{\otimes (k+1)} \rightarrow H^{\otimes k}$
be the contraction map defined by
\[ x_i^* \otimes x_{j_1} \otimes \cdots \otimes x_{j_{k+1}} \mapsto x_i^*(x_{j_1}) \, \cdot
    x_{j_2} \otimes \cdots \otimes \cdots \otimes x_{j_{k+1}}. \]
For the natural embedding ${\iota}_{k+1} : \mathcal{L}_n(k+1) \rightarrow H^{\otimes (k+1)}$,
we obtain a $\mathrm{GL}(n,\Z)$-equivariant homomorphism
\[ \Phi^k = {\varphi}^{k} \circ ({id}_{H^*} \otimes {\iota}_{k+1})
    : H^* {\otimes}_{\Z} \mathcal{L}_n(k+1) \rightarrow H^{\otimes k}. \]
We also call $\Phi^k$ a contraction map.
In Proposition {\rmfamily \ref{prop:3}}, we study the irreducible decomposition of $\mathrm{Der}^+(\mathcal{L}_{n,\Q})(k)$
as a $\mathrm{GL}(n,\Z)$-module using the contraction map.

\vspace{0.5em}

Finally, we review the trace maps. For any $k \geq 2$, let $f_{[k]} : H^{\otimes k} \rightarrow S^k H$ be the natural projection defined by
\[ x_{i_1} \otimes \cdots \otimes x_{i_k} \mapsto x_{i_1} \cdots x_{i_k}. \]
Then the composition map
\[ \mathrm{Tr}_{[k]} := f_{[k]} \circ \Phi^k : H^* {\otimes}_{\Z} \mathcal{L}_n(k+1) \rightarrow S^k H \]
is a $\mathrm{GL}(n,\Z)$-equivariant surjective homomorphism, and is called Morita's trace map.
Morita studied the cokernel of the Johnson homomorphism of $\mathrm{Aut}\,F_n$ using Morita's trace map, and showed that
$S^k H_{\Q}$ appears in the irreducible decomposition of it for any $k \geq 2$. 

\vspace{0.5em}

Let $f_{[1^k]} : H^{\otimes k} \rightarrow \Lambda^k H$
be the natural projection defined by
\[ x_{i_1} \otimes \cdots \otimes x_{i_k} \mapsto x_{i_1} \wedge \cdots \wedge x_{i_k}. \]
Then the composition map
\[ \mathrm{Tr}_{[1^k]} := f_{[1^k]} \circ \Phi^k : H^* {\otimes}_{\Z} \mathcal{L}_n(k+1) \rightarrow \Lambda^k H \]
is called the trace map for $\Lambda^k H$. Using the trace map $\mathrm{Tr}_{[1^k]}$, we showed that there appears $\Lambda^k H_{\Q}$
in the cokernel of the Johnson homomorphism restricted to the lower central series of $\mathrm{IA}_n$ if $3 \leq k \leq n$ and $k$ is odd.
(See \cite{S03} for details.) In Section {\rmfamily \ref{S-Coh}}, we show that $\mathrm{Tr}_{[1^k]}$ defines a non-trivial twisted second cohomology class
of the automorphism group of a free nilpotent group.

\subsection{Chen Lie algebra $\mathcal{L}_n^M$ and its derivations}\label{Ss-Chen}
\hspace*{\fill}\ 

Here we recall the Chen Lie algebra generated by $H$, and its derivation algebra.
Let $F_n^M:=F_n/[[F_n,F_n],[F_n,F_n]]$ be a free metabelian group of rank $n$.
Let $\Gamma_n^M(1) \supset \Gamma_n^M(2) \supset \cdots$ be the lower central series of a free group $F_n^M$ defined by the rule
\[ \Gamma_n^M(1):= F_n^M, \hspace{1em} \Gamma_n^M(k) := [\Gamma_n^M(k-1),F_n^M], \hspace{1em} k \geq 2. \]
We denote by $\mathcal{L}_n^M(k) := \Gamma_n^M(k)/\Gamma_n^M(k+1)$ the $k$-th graded quotient of the lower central series of $F_n^M$,
and by $\mathcal{L}_n^M := {\bigoplus}_{k \geq 1} \mathcal{L}_n^M(k)$ the associated graded sum.
The graded sum $\mathcal{L}_n^M$ naturally has a graded Lie algebra structure induced from
the commutator bracket on $F_n^M$ by the same argument as the free Lie algebra $\mathcal{L}_n$.
The Lie algebra $\mathcal{L}_n^M$ is called the free metabelian Lie algebra or the Chen Lie algebra, generated by $H$.

\vspace{0.5em}

Since $(F_n^M)^{\mathrm{ab}}=H$, $\mathrm{Aut}\,(F_n^M)^{\mathrm{ab}}=\mathrm{Aut}(H)=\mathrm{GL}(n,\Z)$.
By an argument similar to the free Lie algebra, it turns out that each of the graded quotients $\mathcal{L}_n^M(k)$ is a $\mathrm{GL}(n,\Z)$-module.
For $1 \leq k \leq 3$, we have $\mathcal{L}_n(k) = \mathcal{L}_n^M(k)$.
It is also classically known due to Chen \cite{Che} that each $\mathcal{L}_n^M(k)$ is a free abelian group of rank
\begin{equation}\label{ex-chen}
 \mathrm{rank}_{\Z}(\mathcal{L}_n^M(k))=(k-1) \binom{n+k-2}{k}
\end{equation}
with basis
\begin{equation}\label{basis-chen}
\{ [x_{i_1}, x_{i_2}, \ldots , x_{i_k} ] \,|\, i_1 > i_2 \leq i_3 \leq  \cdots \leq i_k \}.
\end{equation}

\vspace{0.5em}

Let $\mathrm{Der}^+(\mathcal{L}_n^M)$ be the graded Lie algebra of derivations of $\mathcal{L}_n^M$ with positive degree.
The degree $k$ part of $\mathrm{Der}^+(\mathcal{L}_n^M)$ is considered as
\[ \mathrm{Der}^+(\mathcal{L}_n^M)(k) = H^* {\otimes}_{\Z} \mathcal{L}_n^M(k+1). \]
Similarly, we define a graded Lie algebra $\mathrm{Der}^+(\mathcal{L}_{n,\Q}^{M})$ over $\Q$.
In Subsection {\rmfamily \ref{Ss-DecM}}, we give the irreducible decomposition of $\mathrm{Der}^+(\mathcal{L}_{n,\Q}^{M})$.

\vspace{0.5em}

Now, Morita's trace map $\mathrm{Tr}_{[k]}$ naturally factors through a surjective homomorphism
$H^* \otimes_{\Z} \mathcal{L}_n(k) \rightarrow H^* \otimes_{\Z} \mathcal{L}_n^M(k)$. Namely, $\mathrm{Tr}_{[k]}$ induces a $\mathrm{GL}(n,\Z)$-equivariant
homomorphism
\[ \mathrm{Tr}_{[k]}^M : H^* {\otimes}_{\Z} \mathcal{L}_n(k+1) \rightarrow S^k H. \]
(See Subsection 3.2 in\cite{S06}.) We also call it Morita's trace map.
In Section {\rmfamily \ref{S-Chen}}, we determine the abelianization of $\mathrm{Der}^+(\mathcal{L}_n^M)$
as a Lie algebra using Morita's trace maps $\mathrm{Tr}_{[k]}^M$.

\subsection{Lie algebra $\mathcal{L}_n^N$ and its derivations}\label{Ss-satoh}
\hspace*{\fill}\ 

Let $F_n^N$ be the quotient group of $F_n$ by the subgroup
$[\Gamma_n(3), \Gamma_n(3)][[\Gamma_n(2),\Gamma_n(2)],\Gamma_n(2)]$.
Let $\Gamma_n^N(1) \supset \Gamma_n^N(2) \supset \cdots$ be the lower central series of a free group $F_n^N$ and
$\mathcal{L}_n^N(k):= \Gamma_n^N(k)/\Gamma_n^N(k+1)$ its graded quotients. Similarly, the graded sum
$\mathcal{L}_n^N := {\bigoplus}_{k \geq 1} \mathcal{L}_n^N(k)$ has a graded Lie algebra structure induced from the commutator bracket of $F_n^N$.

\vspace{0.5em}

In our paper \cite{S08}, we have determined the $\Z$-module structure of $\mathcal{L}_n^N(k)$, and showed that
its rank is given by.
\begin{equation}\label{ex-satoh}
 \mathrm{rank}_{\Z}(\mathcal{L}_n^N(k)) = (k-1)\binom{k+n-2}{k} + \frac{1}{2} n(n-1)(k-3) \binom{n+k-4}{k-2},
\end{equation}
and
\begin{eqnarray}
  & & \{ [x_{i_1}, x_{i_2}, \ldots , x_{i_k} ] \,|\, i_1 > i_2 \leq i_3 \leq  \cdots \leq i_k \} \nonumber \\
  & & \hspace{3em} \cup \,\, \{ [x_{i_1}, \ldots, x_{i_{k-2}},[x_{i_{k-1}},x_{i_k}]] \,|\, i_1 > i_2 \leq i_3 \leq \cdots \leq i_{k-2}, \,\, i_{k-1} > i_k \}.\label{basis-N}
\end{eqnarray}
is a basis of it.

\vspace{0.5em}

We \cite{S08} used these facts to investigate the cokernel of the Johnson homomorphism.
Let $\mathrm{Der}^+(\mathcal{L}_n^N)$ be the graded Lie algebra of derivations of $\mathcal{L}_n^N$ with positive degree.
The degree $k$ part of $\mathrm{Der}^+(\mathcal{L}_n^N)$ is considered as
\[ \mathrm{Der}^+(\mathcal{L}_n^N)(k) = H^* {\otimes}_{\Z} \mathcal{L}_n^N(k+1). \]
Similarly, we define a graded Lie algebra $\mathrm{Der}^+(\mathcal{L}_{n,\Q}^{N})$ over $\Q$.
In Subsection {\rmfamily \ref{Ss-DecN}}, we give the irreducible decomposition of $\mathrm{Der}^+(\mathcal{L}_{n,\Q}^{N})$.

\subsection{Johnson homomorphisms}\label{Ss-John}
\hspace*{\fill}\ 

For each $k \geq 1$, let $N_{n,k}:=F_n/\Gamma_n(k+1)$ of $F_n$ be the free nilpotent group of class $k$ and rank $n$, and $\mathrm{Aut}\,N_{n,k}$
its automorphism group.
Since the subgroup $\Gamma_n(k+1)$ is characteristic in $F_n$, the group $\mathrm{Aut}\,F_n$ naturally acts on $N_{n,k}$.
This action induces a homomorphism
\[ \rho_k: \mathrm{Aut}\,F_n \rightarrow \mathrm{Aut}\,N_{n,k}. \]
Let $\mathcal{A}_n(k)$ be the kernel of $\rho_k$. Then the groups $\mathcal{A}_n(k)$ define a descending central filtration
\[ \mathrm{IA}_n = \mathcal{A}_n(1) \supset \mathcal{A}_n(2) \supset \cdots \]
This filtration is called the Johnson filtration of $\mathrm{Aut}\,F_n$.
Set $\mathrm{gr}^k (\mathcal{A}_n) := \mathcal{A}_n(k)/\mathcal{A}_n(k+1)$.
For each $k \geq 1$, the group $\mathrm{Aut}\,F_n$ acts on $\mathrm{gr}^k (\mathcal{A}_n)$ by conjugation.
This action induces that of $\mathrm{GL}(n,\Z)=\mathrm{Aut}\,F_n/\mathrm{IA}_n$ on it.

\vspace{0.5em}

In order to study the $\mathrm{GL}(n,\Z)$-module structure of ${\mathrm{gr}}^k (\mathcal{A}_n)$,
the Johnson homomorphisms of $\mathrm{Aut}\,F_n$ are defined as follows.
For each $k \geq 1$, define a homomorphism
$\tilde{\tau}_k : \mathcal{A}_n(k) \rightarrow \mathrm{Hom}_{\Z}(H, {\mathcal{L}}_n(k+1))$ by
\[ \sigma \hspace{0.3em} \mapsto \hspace{0.3em} (x \mapsto x^{-1} x^{\sigma}), \hspace{1em} x \in H. \]
Then the kernel of $\tilde{\tau}_k$ is just $\mathcal{A}_n(k+1)$. 
Hence it induces an injective homomorphism
\[ \tau_k : \mathrm{gr}^k (\mathcal{A}_n) \hookrightarrow \mathrm{Hom}_{\Z}(H, \mathcal{L}_n(k+1))
       = H^* \otimes_{\Z} \mathcal{L}_n(k+1). \]
The homomorphism $\tau_k$ is $\mathrm{GL}(n,\Z)$-equivariant, and is called the $k$-th Johnson homomorphism of $\mathrm{Aut}\,F_n$.
Furthermore, we remark that the sum of the Johnson homomorphisms forms a Lie algebra homomorphism as follows.
Let ${\mathrm{gr}}(\mathcal{A}_n) := \bigoplus_{k \geq 1} {\mathrm{gr}}^k (\mathcal{A}_n)$ be the graded sum of
$\mathrm{gr}^k (\mathcal{A}_n)$.
The graded sum ${\mathrm{gr}}(\mathcal{A}_n)$ has a graded Lie algebra structure induced from the commutator bracket on $\mathrm{IA}_n$.
Then the sum of the Johnson homomorphisms
\[ \tau := \bigoplus_{k \geq 1} \tau_k : {\mathrm{gr}}(\mathcal{A}_n) \rightarrow \mathrm{Der}^{+}(\mathcal{L}_n) \]
is a graded Lie algebra homomorphism.

\vspace{0.5em}

It is known that $\tau_1$ gives the abelianization of $\mathrm{IA}_n$ by an independent work of
Cohen-Pakianathan \cite{Co1, Co2}，Farb \cite{Far} and Kawazumi \cite{Kaw} as mentioned above.
Namely, $\mathrm{gr}^1 (\mathcal{A}_n) \cong \mathrm{IA}_n^{\mathrm{ab}}$. Furthermore, we have exact sequences
\begin{eqnarray*}
 & & \hspace{2.5em} 0 \rightarrow \mathrm{gr}^2 (\mathcal{A}_n) \xrightarrow{\tau_2} \mathrm{Hom}_{\Z}(H, \mathcal{L}_n(3))
     \xrightarrow{\mathrm{Tr}_{[2]}} S^2 H \rightarrow 0, \\
 & &  0 \rightarrow \mathrm{gr}_{\Q}^3 (\mathcal{A}_n) \xrightarrow{\tau_{3,\Q}} \mathrm{Hom}_{\Q}(H_{\Q}, \mathcal{L}_{n,\Q}(3))
    \xrightarrow{\mathrm{Tr}_{[3]}^{\Q} \oplus \mathrm{Tr}_{[1^3]}^{\Q}} S^3 H_{\Q} \oplus \Lambda^3 H_{\Q} \rightarrow 0.
\end{eqnarray*}
as $\mathrm{GL}(n,\Z)$-modules. (See \cite{S03} for details.) In general, however, the $\mathrm{GL}(n,\Z)$-module structure of
$\mathrm{gr}_{\Q}^k (\mathcal{A}_n)$ is not determined for $k \geq 4$.

\vspace{0.5em}

To give a lower bound on the image of the Johnson homomorphisms $\tau_k$, or equivalently an upper bound on the cokernel of $\tau_k$,
it is sometimes useful to consider the restriction of $\tilde{\tau}_k$ to the lower central series of $\mathrm{IA}_n$.
Let $\mathcal{A}_n'(k)$ be the lower central series of $\mathrm{IA}_n$ with $\mathcal{A}_n'(1)=\mathrm{IA}_n$.
Since the Johnson filtration is central, $\mathcal{A}_n'(k) \subset \mathcal{A}_n(k)$ for each $k \geq 1$.
Set $\mathrm{gr}^k(\mathcal{A}_n') := \mathcal{A}_n'(k)/\mathcal{A}_n'(k+1)$.
Then $\mathrm{GL}(n,\Z)$ naturally acts on each of $\mathrm{gr}^k(\mathcal{A}_n')$, and
the restriction of $\tilde{\tau}_k$ to $\mathcal{A}_n'(k)$ induces a $\mathrm{GL}(n,\Z)$-equivariant homomorphism
\[ \tau_k' : \mathrm{gr}^k (\mathcal{A}_n') \rightarrow H^* \otimes_{\Z} \mathcal{L}_n(k+1). \]
We also call $\tau_k'$ the Johnson homomorphism of $\mathrm{Aut}\,F_n$.
We remark that if we denote by $i_k : \mathrm{gr}^k(\mathcal{A}_n') \rightarrow \mathrm{gr}^k(\mathcal{A}_n)$ the homomorphism induced from the
inclusion $\mathcal{A}_n'(k) \hookrightarrow \mathcal{A}_n(k)$, then $\tau_k' = \tau_k \circ i_k$ for each $k \geq 1$.

\vspace{0.5em}

Let $\mathcal{C}_n(k)$ be a quotient module of $H^{\otimes k}$ by the action of cyclic group $\Cyc_k$ of order $k$ on the components: 
\[ \mathcal{C}_n(k) = H^{\otimes k} \big{/} \langle a_1 \otimes a_2 \otimes \cdots \otimes a_k - a_2 \otimes a_3 \otimes \cdots \otimes a_k \otimes a_1
   \,|\, a_i \in H \rangle. \]
In our paper \cite{S11}, we showed that for any $k \geq 2$ and $n \geq k+2$,
\begin{equation}\label{eq-dec}
 \mathrm{Coker}(\tau_{k, \Q}') \cong \mathcal{C}_n^{\Q}(k).
\end{equation}

\vspace{0.5em}

Now, by the same argument as $\tau_k$, we can define the Johnson homomorphisms $\tau_k^M$ of $\mathrm{Aut}\,F_n^M$ as follows.
Let $\mathcal{A}_n^M(k)$ be the kernel of a natural homomorphism $\mathrm{Aut}\,F_n^M \rightarrow \mathrm{Aut}\,(F_n/\Gamma_n^M(k+1))$, and
$\mathrm{gr}^k(\mathcal{A}^M):= \mathcal{A}_n^M(k)/\mathcal{A}_n^M(k+1)$ its graded quotient. Then a $\mathrm{GL}(n,\Z)$-equivariant injective
homomorphism
\[ \tau_k^M : \mathrm{gr}^k (\mathcal{A}_n^M) \hookrightarrow H^* \otimes_{\Z} \mathcal{L}_n^M(k+1) \]
is defined by $\sigma \hspace{0.5em} \mapsto \hspace{0.5em} (x \mapsto x^{-1} x^{\sigma})$. In our paper \cite{S06}, we showed that
\[ 0 \rightarrow \mathrm{gr}^k (\mathcal{A}_n^M) \xrightarrow{\tau_k^M} \mathrm{Hom}_{\Z}(H, \mathcal{L}_n^M(k+1))
     \xrightarrow{\mathrm{Tr}_{[k]}^M} S^k H \rightarrow 0 \]
is an exact sequence of $\mathrm{GL}(n,\Z)$-modules for each $k \geq 2$ and $n \geq 4$.

\subsection{Automorphism group of a free nilpotent group}\label{Ss-Nil}
\hspace*{\fill}\ 

In this section we recall some properties of the automorphism group $\mathrm{Aut}\,N_{n,k}$ of a free nilpotent group $N_{n,k}$.
First, we consider generators of $\mathrm{Aut}\,N_{n,k}$.
For any $\sigma \in \mathrm{Aut}\,F_n$, we also denote $\rho_k(\sigma) \in \mathrm{Aut}\,N_{n,k}$ by $\sigma$ if there is no confusion. 
Andreadakis \cite{And} showed that $\rho_2$ is surjective, and that $\rho_k$ is not surjective for $k \geq 3$.
Hence $\mathrm{Aut}\,N_{n,2}$ is generated by the Nielsen's generators $P$, $Q$, $S$ and $U$.

\vspace{0.5em}

For $k \geq 3$, Goryaga \cite{Gor} showed that $\mathrm{Aut}\,N_{n,k}$ is finitely generated for $n \geq 3 \cdot 2^{k-2}+k$ and $k \geq 2$.
In 1984, Andreadakis \cite{An2} showed that $\mathrm{Aut}\,N_{n,k}$ is generated by $P$, $Q$, $S$, $U$ and the other $k-2$ elements for $n \geq k \geq 2$.
In this paper, we use the following Bryant and Gupta's result.
Let $\theta$ be an automorphism of $N_{n,k}$, defined by 
\[ \theta : x_t \mapsto \begin{cases}
               [x_1, [x_2,x_1]]x_1, & t=1, \\
               x_t,                & t \neq 1.
              \end{cases}\]
Then Bryant and Gupta \cite{BrG} showed that for $k \geq 3$ and $n \geq k-1$, the group $\mathrm{Aut}\,N_{n,k}$ is generated by
$P$, $Q$, $S$, $U$ and $\theta$.
In Section {\rmfamily \ref{S-Coh}}, we use these generators to compute the first cohomology group of $\mathrm{Aut}\,N_{n,k}$.
We remark that any presentation for $\mathrm{Aut}\,N_{n,k}$ is not obtained except for $\mathrm{Aut}\,N_{2,k}$ for $k= 1, 2$ and $3$
due to Lin \cite{Lin}.

\vspace{0.5em}

Next, we consider a relation between $\mathrm{Aut}\,N_{n,k}$ and $\mathrm{Aut}\, N_{n,k+1}$.
For $k \geq 1$, we have a natural central group extension
\[ 0 \rightarrow \mathcal{L}_n(k+1) \rightarrow N_{n,k+1} \rightarrow N_{n,k} \rightarrow 1. \]
In this paper, we identify $\mathcal{L}_n(k+1)$ with its image in $N_{n,k+1}$.
Namely, $\mathcal{L}_n(k+1)$ is equal to the $(k+1)$-st term of the lower central series
of $N_{n,k+1}$. In particular, $\mathcal{L}_n(k+1)$ is a characteristic subgroup of $N_{n,k+1}$. Hence the projection $N_{n,k+1} \rightarrow N_{n,k}$
induces a homomorphism $\psi_k : \mathrm{Aut}\, N_{n,k+1} \rightarrow \mathrm{Aut}\,N_{n,k}$.
In order to investigate the kernel of $\psi_k$, we consider the degree $k$ part of the derivation algebra of the free Lie algebra.
For any $f \in \mathrm{Hom}_{\Z}(H, \mathcal{L}_n(k+1))$, define a map $\widetilde{f} : N_{n,k+1} \rightarrow N_{n,k+1}$ by
\[ x^{\widetilde{f}} = ([x])^f x \]
where $[x] \in H$ is the image of $x$ in $H$ under the natural projection $N_{n,k+1} \rightarrow H$.
Then $\widetilde{f}$ is an automorphism of $N_{n,k+1}$, and a map $\mathrm{Hom}_{\Z}(H, \mathcal{L}_n(k+1)) \rightarrow \mathrm{Aut}\, N_{n,k+1}$
defined by $f \mapsto \widetilde{f}$ is an injective homomorphism which image coincides with the kernel of $\psi_k$.
Namely, for $k \geq 1$, we have a group extension
\begin{equation}\label{eq-ext}
 0 \rightarrow \mathrm{Hom}_{\Z}(H, \mathcal{L}_n(k+1)) \rightarrow \mathrm{Aut}\, N_{n,k+1} \rightarrow \mathrm{Aut}\,N_{n,k} \rightarrow 1
\end{equation}
introduced by Andreadakis \cite{And}. (For details, see also Proposition 2.3 in Morita's paper \cite{Mo4}.)

\vspace{0.5em}

Let $T_{n,k}$ be the image of the homomorphism $\rho_k : \mathrm{Aut}\,F_n \rightarrow \mathrm{Aut}\,N_{n,k}$ for each $k \geq 1$.
Clearly, the group $T_{n,k}$ is generated by $P$, $Q$, $S$ and $U$, and is called the tame automorphism group of $N_{n,k}$.
The exact sequence (\ref{eq-ext}) induces
\begin{equation}\label{eq-TAut}
0 \rightarrow \mathrm{gr}^k(\mathcal{A}_n) \rightarrow T_{n,k+1} \rightarrow T_{n,k} \rightarrow 1.
\end{equation}
In Section {\rmfamily \ref{S-Coh}}, we use these two group extensions to study twisted cohomology groups of $T_{n,k}$ and $\mathrm{Aut}\,N_{n,k}$.

\section{Representation theory of $\mathrm{GL}(n,\Q)$ and the symmetric group $\mf{S}_k$}

In this section, we prepare some results in representation theory for $\mathrm{GL}(n,\Q)$, namely Cartan-Weyl's heighest weight theory,
several tensor product theorems and the Schur-Weyl duality for $\mathrm{GL}(n,\Q)$ and the symmetric group $\mf{S}_k$.
In the last of this section, we briefly recall Kra\'{s}kiewicz-Weyman's combinatorial description for the branching rules of irreducible
$\mf{S}_k$-modules to a cyclic subgroup $\Cyc_k$ of order $k$.

\subsection{Partitions and symmetric functions}
\hspace*{\fill}\ 

A partition $\lambda=(\lambda_1, \lambda_2, \ldots)$ is a sequence of decreasing non-negative integers $\lambda_1 \ge \lambda_2 \ge \cdots \ge 0$.
We denote the set of partitions by $\mca{P}$. Set $|\lambda|:=\lambda_1+\lambda_2+ \cdots$. If a partition satisfies $|\lambda|=m$,
the we call $\lambda$ a partition of $m$ and write $\lambda \vdash m$. The conjugate partition of $\lambda$ is the partition
$\lambda'=(\lambda'_1,\lambda'_2, \ldots)$ defined by $\lambda'_i:=\sharp\{j \ | \ \lambda_j \ge i\}$. Put $\ell(\lambda)=\sharp\{i \ | \ \lambda_i\neq 0\}$,
we call it the length of $\lambda$. A partition $\lambda$ is \textit{even} if all $\lambda_i$ are even. For two partitions $\lambda$ and $\mu$,
we write $\lambda \supset \mu$ if $\lambda_i \ge \mu_i$ for all $i$.
For two partition $\lambda$ and $\mu$ satisfying $\lambda \supset \mu$, the skew shape $\lambda \backslash \mu$ is a vertical (resp. horizontal) strip
if there is at most one box in each row (resp. column). 
For a partition $\lambda$ of $m$, a semi-standard (resp. standard) tableaux of shape $\lambda$ is an array $T=(T_{ij})$ of positive integers $1,2, \ldots ,m$ of shape $\lambda$
that is weakly (resp. strictly) increasing in every row and strictly increasing in every column. 
\vspace{0.5em}

For a partition $\lambda=(\lambda_1,\lambda_2, \ldots ,\lambda_n)$, we define a polynomial of $n$-variable by
\[
s_\lambda(x_1,\ldots ,x_n):=\dfrac{\det(x_i^{\lambda_j+n-j})_{1 \le i,j \le n}}{\det(x_i^{n-j})_{1 \le i,j \le n}}.
\]
This is a homogeneous symmetric polynomial of degree $|\lambda|$. We call it the Schur polynomial associated to $\lambda$.
For two partitions $\mu=(\mu_1, \ldots ,\mu_n)$ and $\nu=(\nu_1, \ldots ,\nu_n)$, we define the Littlewood-Richardson coefficients $\LR_{\mu\nu}^{\lambda}$ by 
\[
s_\mu(x_1, \ldots ,x_n) \cdot s_\nu(x_1, \ldots ,x_n)=\sum_{\lambda}\LR_{\mu\nu}^{\lambda}s_\lambda(x_1, \ldots ,x_n).
\]
Then $\LR_{\mu\nu}^\lambda$ becomes a non-negative integer.
\subsection{Highest weight theory for $\mathrm{GL}(n,\Q)$}
\hspace*{\fill}\ 

Let $T_n:=\{\diag(t_1, \ldots ,t_n) \ | \ t_j \neq 0, \ 1 \le j \le n \}$ be the maximal torus of $\mathrm{GL}(n,\Q)$. 
We define one-dimensional representations $\varepsilon_i$ of $T_n$ by $\varepsilon_i(\diag(t_1, \ldots ,t_n))=t_i$. Then 
\begin{eqnarray*}
P_{\mathrm{GL}(n,\Q)}&:=&\{\lambda_1\varepsilon_1+ \cdots +\lambda_n\varepsilon_n \ | \ \lambda_i \in \Z, \,\, 1 \le i \le n \}\cong \Z^n, \\
P^+_{\mathrm{GL}(n,\Q)}&:=&\{\lambda_1\varepsilon_1+ \cdots +\lambda_n\varepsilon_n \in P_{\mathrm{GL}(n,\Q)}\ | \ \lambda_1 \ge \lambda_2 \ge \cdots \ge \lambda_n\}
\end{eqnarray*}
gives the weight lattice and the set of dominant integral weights of $\mathrm{GL}(n,\Q)$ respectively. 
In the following,
for simplicity, we often write $G = \mathrm{GL}(n,\Q)$, $T=T_n$, $P=P_{\mathrm{GL}(n,\Q)}$ and $P^+=P^+_{\mathrm{GL}(n,\Q)}$.
Furthermore, we write $\lambda=(\lambda_1, \ldots, \lambda_n) \in \Z^n$ for $\lambda=\lambda_1\varepsilon_1+ \cdots +\lambda_n\varepsilon_n \in P$ or $P^+$
if there is no confusion.

\vspace{0.5em}

For a rational representation $V$ of $G$, there exists an irreducible decomposition
$V=\bigoplus_{\lambda \in P}V_\lambda$ as a $T$-module where
$V_\lambda:=\{v \in V \ | \ tv=\lambda(t)v \ \text{for any} \ t \in T\}$. We call this decomposition a weight decomposition of $V$ with respect to $T$.
If $V_\lambda \neq \{0\}$, then we call $\lambda$ a weight of $V$. For a weight $\lambda$,
a non-zero vector $v \in V_\lambda$ is call a weight vector of weight $\lambda$.

\vspace{0.5em}

Let $U$ be the subgroup of $G$ consists of all upper unitriangular matrices in $G$. For a rational representation $V$ of $G$,
we set $V^U := \{v \in V \ | \ uv=v \ \text{for all} \ u \in U\}$. We call a non-zero vector $v \in V^U$ a maximal vector of $V$.
This subspace $V^U$ is $T$-stable. Thus, as a $T$-module, $V^U$ has an irreducible decomposition
$V^U=\bigoplus_{\lambda \in P}V^U_\lambda$ where $V^U_\lambda:=V^U \cap V_\lambda$.
\begin{thm}[Cartan-Weyl's highest weight theory] \quad 
\begin{enumerate}[$(1)$]
\item Any rational representation of $V$ is completely reducible.
\item Suppose $V$ is an irreducible rational representation of $G$. Then $V^U$ is one-dimensional, and the weight $\lambda$ of $V^U=V_\lambda^U$ belongs to $P^+$.
We call this $\lambda$ the highest weight of $V$, and any non-zero vector $v \in V^U_\lambda$ is called a highest weight vector of $V$. 
\item For any $\lambda \in P^+$, there exists a unique (up to isomorphism) irreducible rational representation $L^\lambda$ of $G$
with highest weight $\lambda$. Moreover, for two $\lambda, \mu \in P^+$, $L^\lambda \cong L^\mu$ if and only if $\lambda=\mu$.
\item The set of isomorphism classes of irreducible rational representations of $G$ is parameterized by the set $P^+$ of dominant integral weights.
\item Let $V$ be a rational representation of $G$ and $\chi_V$ a character of $V$ as a $T$-module.
Then for two rational representation $V$ and $W$, they are isomorphic as $G$-modules if and only if $\chi_V=\chi_W$.
\end{enumerate}
\end{thm}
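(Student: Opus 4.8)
The theorem is the classical Cartan--Weyl theory of the highest weight, so a proof amounts to assembling standard ingredients (a fully detailed account can be found in standard references on the representation theory of reductive groups, e.g. Fulton--Harris or Goodman--Wallach); in the paper we therefore record the statement and indicate only the line of argument, which I sketch now. For $(1)$ the plan is to apply Weyl's unitarian trick. A rational representation $V$ of $G=\mathrm{GL}(n,\Q)$ extends by extension of scalars to a rational representation $V_\C$ of $\mathrm{GL}(n,\C)$; restricting to the compact group $U(n)$ and averaging a Hermitian form over the Haar measure makes $V_\C$ unitary, hence completely reducible as a $U(n)$-module, and since $U(n)$ is Zariski dense in $\mathrm{GL}(n,\C)$ its invariant subspaces coincide with the $\mathrm{GL}(n,\C)$-invariant ones, so $V_\C$ is semisimple. (Equivalently one argues on the Lie algebra: $\mf{gl}_n=\mf{sl}_n\oplus\mf{z}$, the Casimir element of $\mf{sl}_n$ yields Weyl's complete reducibility theorem in characteristic zero, and the centre $\mf{z}$ acts diagonalizably on any rational module.) Finally one descends from $\C$ to $\Q$: semisimplicity of a module over a $\Q$-algebra is unaffected by the (separable, since characteristic zero) scalar extension $\C/\Q$, so $V$ is semisimple.

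For $(2)$, fix the Borel subgroup $B=TU$ with $U$ the upper unitriangular subgroup and $T$ the diagonal torus, and take the $\varepsilon_i-\varepsilon_j$ with $i<j$ as positive roots. If $V$ is irreducible and rational, then $T$ acts diagonalizably, giving a finite weight decomposition $V=\bigoplus_\mu V_\mu$; choose $\lambda$ maximal among the weights of $V$ for the dominance order. Every nonzero $v\in V_\lambda$ is fixed by $U$ (the unipotent radical shifts a weight by a sum of positive roots, and no such weight exceeds $\lambda$), so $V^U\supseteq V_\lambda\neq 0$, and $V^U$ is $T$-stable because $T$ normalizes $U$. By the PBW theorem $\mca{U}(\mf{gl}_n)v=\mca{U}(\mf{n}^-)v$, with $\mf{n}^-$ the strictly lower triangular subalgebra; all of its weights are $\le\lambda$ and its $\lambda$-weight space is $\Q v$, so irreducibility forces $V=\mca{U}(\mf{n}^-)v$, whence $V_\lambda=\Q v$ and $V^U=V^U_\lambda$ is one-dimensional. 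Applying $\mf{sl}_2$-representation theory along each simple-root $\mf{sl}_2$-triple shows $\lambda_i-\lambda_{i+1}\ge 0$ for all $i$, i.e. $\lambda\in P^+$.

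For $(3)$ and $(4)$: uniqueness follows by a graph argument. If $V,V'$ are irreducible with highest weight $\lambda$ and highest weight vectors $v,v'$, the submodule $M\subseteq V\oplus V'$ generated by the maximal vector $(v,v')$ has one-dimensional $U$-invariants (it equals $\mca{U}(\mf{n}^-)(v,v')$, with $\lambda$-weight space $\Q(v,v')$), hence is irreducible; its projections to $V$ and $V'$ are nonzero, so surjective, so isomorphisms, and $M$ is the graph of an isomorphism $V\xrightarrow{\sim}V'$. For existence I would build $L^\lambda$ by hand: for a partition $\lambda$, realize $L^\lambda$ inside $H_\Q^{\otimes|\lambda|}$ via the Young symmetrizer of $\lambda$ (this is the Schur--Weyl picture the paper uses later), and for an arbitrary dominant $\lambda=(\lambda_1\ge\cdots\ge\lambda_n)$ write $\lambda=\mu+(\lambda_n,\ldots,\lambda_n)$ with $\mu$ a partition and put $L^\lambda:=L^\mu\otimes(\det)^{\lambda_n}$; a weight count checks its highest weight is $\lambda$. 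Then $(4)$ is precisely the assertion that $\lambda\mapsto L^\lambda$ is the claimed bijection, combining $(2)$ and $(3)$. For $(5)$, part $(1)$ lets us write $V=\bigoplus_{\lambda\in P^+}m_\lambda L^\lambda$ (a finite sum), so $\chi_V=\sum_\lambda m_\lambda\chi_{L^\lambda}$; the $\chi_{L^\lambda}$ are, up to a twist by $\det$, the Schur polynomials $s_\mu(t_1,\ldots,t_n)$, which are $\Z$-linearly independent (they are unitriangular against the monomial symmetric functions), so $\chi_V$ determines every $m_\lambda$, and $\chi_V=\chi_W$ forces $V\cong W$.

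The steps I expect to require the most care are complete reducibility in $(1)$ over the ground field $\Q$ rather than over $\C$ — the unitarian trick naturally produces a $\C$-rational splitting that one must then descend, although the Casimir argument on $\mf{sl}_n$ does go through directly over $\Q$ — and the existence half of $(3)$, that is, exhibiting for each dominant integral weight a genuinely finite-dimensional irreducible representation with that highest weight. Both are entirely classical, which is the reason we state the theorem and cite a textbook instead of reproducing the proof.
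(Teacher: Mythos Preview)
Your proposal is correct and matches the paper's treatment: the paper states this theorem as classical background with no proof whatsoever, and you have accurately recognized this, supplying a sound sketch of the standard Cartan--Weyl argument (unitarian trick / Casimir for complete reducibility, maximal-weight and PBW for the highest weight vector, graph argument for uniqueness, Schur functors twisted by $\det$ for existence, linear independence of characters for $(5)$). There is nothing to correct.
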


\begin{rem}
We can parameterize the set of isomorphism classes of irreducible rational representations of $G$
by $P^+$.
On the other hand, we define the determinant representation by $\det^e:\mathrm{GL}(n,\Q) \ni X \to \det{X}^e \in \Q ^\times$.
The highest weight of this representation is given by $(e,e, \cdots ,e) \in P^+$.
If $\lambda \in P^+$ satisfies $\lambda_n<0$,
then $L^\lambda \cong \det^{-\lambda_n} \otimes L^{(\lambda_1-\lambda_n,\lambda_2-\lambda_n, \ldots ,0)}$.
Therefore we can parameterize the set of isomorphism classes of irreducible rational representations of $G$
by the set $\{(\lambda,e)\}$ where $\lambda$ is a partition such that $\ell(\lambda) \le n$ and $e \in \Z_{<0}$.
Moreover the set of isomorphism classes of polynomial irreducible representations is parameterized by the set of partitions $\lambda$
such that $\ell(\lambda) \le n$.

\vspace{0.5em}

Note that the dual representation of $L^{(\lambda_1,\lambda_2, \ldots ,\lambda_n)}$ is isomorphic to $L^{(-\lambda_n,\ldots ,-\lambda_2,-\lambda_1)}$.
Especially, the natural representation $H_{\Q}=\Q^n$ of $G$ and its dual representation $H_{\Q}^*$ are irreducible with highest weight
$(1,0, \ldots ,0)$ and $(0, \ldots ,0,-1)$ respectively. We also have $H_{\Q}^* \cong \det^{-1} \otimes L^{(1, \ldots ,1,0)}$.

\vspace{0.5em}

There is another parameterization of irreducible rational representations of $G$.
For any $\lambda \in P^+$,
we define two partitions $\lambda_+$ and $\lambda_-$ by
\begin{eqnarray*}
\lambda_+&:=&(\max(\lambda_1,0), \ldots ,\max(\lambda_n,0)), \\
\lambda_-&:=&(-\min(\lambda_n,0), \ldots ,-\min(\lambda_1,0)).
\end{eqnarray*}
Then there is a bijection
\[
 P^+ = \{ (\lambda_1, \lambda_2, \ldots, \lambda_n) \ | \ \lambda_i \in \Z, \,\, 1 \le i \le n \}
   \rightarrow \{(\mu;\nu) \in \mca{P} \times \mca{P} \ | \ \ell(\mu)+\ell(\nu) \le n\}.
\]
defined by $\lambda \mapsto (\lambda_+;\lambda_-)$. Using this bijection, we can parameterize the isomorphism classes of irreducible rational representations
of $G$ by the set $\{ L^{\{\lambda_+;\lambda_-\}} \}$.
Note that under this notation, we have $(L^{\{\lambda;\mu\}})^* \cong L^{\{\mu;\lambda\}}$.
\end{rem}

\begin{thm}[Weyl's character and dimension formula for $\mathrm{GL}(n,\Q)$]\label{thm:dim}\quad
\begin{enumerate}[$(1)$]
\item For a partition $\lambda$, let $\chi_\lambda$ be a character of an irreducible polynomial representation $L^\lambda$.
Then we have $\chi_\lambda(t)=s_\lambda(t_1, \ldots ,t_n)$ for a diagonal matrix $\diag(t_1, \ldots ,t_n) \in T$.
\item For a partition $\lambda$, the dimension of the irreducible polynomial representation $L^\lambda$ coincides with the number of semi-standard tableaux on $\lambda$.
\end{enumerate}
\end{thm}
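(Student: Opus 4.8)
The plan is to establish part (1) from the Weyl character formula and to deduce part (2) by specializing at the identity. Recall that for a connected reductive group with Weyl group $W$ and half-sum of positive roots $\rho$, the character of the irreducible module with highest weight $\lambda$ is $\chi_\lambda = \big(\sum_{w\in W}\sgn(w)\,e^{w(\lambda+\rho)}\big)\big/\big(\sum_{w\in W}\sgn(w)\,e^{w(\rho)}\big)$. I would then specialize to $G=\mathrm{GL}(n,\Q)$: here the positive roots are the $\varepsilon_i-\varepsilon_j$ with $i<j$, the Weyl group is $\mf{S}_n$ permuting the coordinates $\varepsilon_1,\ldots,\varepsilon_n$, and one may take $\rho=\delta:=(n-1,n-2,\ldots,1,0)$, since any shift by a multiple of $(1,\ldots,1)$ cancels in the quotient. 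Evaluating on $t=\diag(t_1,\ldots,t_n)\in T$ turns each $e^{\mu}$ into the monomial $t_1^{\mu_1}\cdots t_n^{\mu_n}$, so the denominator becomes $\sum_{w\in\mf{S}_n}\sgn(w)\,t^{w(\delta)}=\det(t_i^{n-j})_{1\le i,j\le n}$, the Vandermonde determinant, and the numerator becomes $\det(t_i^{\lambda_j+n-j})_{1\le i,j\le n}$. Their quotient is by definition $s_\lambda(t_1,\ldots,t_n)$, which is (1); that the numerator is divisible by the Vandermonde, so the quotient is a genuine symmetric polynomial, is automatic because $L^\lambda$ is a polynomial representation, and can also be checked directly.

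As a variant that side-steps the Weyl character formula, I could argue instead from the Schur--Weyl duality for $\mathrm{GL}(n,\Q)$ and $\mf{S}_k$, which yields $H_{\Q}^{\otimes k}\cong\bigoplus_{\lambda\vdash k}S^\lambda\otimes L^\lambda$ as $\mf{S}_k\times\mathrm{GL}(n,\Q)$-modules. Taking characters of both sides and evaluating at $(\sigma,t)$ with $\sigma$ of cycle type $\mu$ gives $p_\mu(t_1,\ldots,t_n)=\sum_{\lambda\vdash k}\chi^\lambda_{\mf{S}_k}(\mu)\,\chi_{L^\lambda}(t)$, where $p_\mu=\prod_i\big(\sum_j t_j^{\mu_i}\big)$; comparing with the Frobenius character formula $p_\mu=\sum_{\lambda\vdash k}\chi^\lambda_{\mf{S}_k}(\mu)\,s_\lambda$ and using invertibility of the character table of $\mf{S}_k$ (taking $n$ large and then stabilizing) again forces $\chi_{L^\lambda}=s_\lambda$.

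For part (2), since the identity element of $\mathrm{GL}(n,\Q)$ is $\diag(1,\ldots,1)$, part (1) already gives $\dim L^\lambda=\chi_\lambda(\mathrm{id})=s_\lambda(1,\ldots,1)$. The remaining step is to invoke the combinatorial expansion of the Schur polynomial, $s_\lambda(x_1,\ldots,x_n)=\sum_T x_1^{m_1(T)}\cdots x_n^{m_n(T)}$, the sum over semi-standard tableaux $T$ of shape $\lambda$ with entries in $\{1,\ldots,n\}$ and $m_i(T)$ the number of $i$'s in $T$; I would derive this from the bialternant formula via the Lindstr\"om--Gessel--Viennot lemma. Setting $x_1=\cdots=x_n=1$ collapses every summand to $1$, so $s_\lambda(1,\ldots,1)$ counts exactly these tableaux, which is the assertion. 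The only genuinely non-formal ingredient in the whole argument is the Weyl character formula (equivalently, after part (1), the tableau expansion of $s_\lambda$); this is the step I expect to be the main obstacle, and in a paper like this one would presumably cite it, possibly alongside the standard matching of Weyl's product formula $\dim L^\lambda=\prod_{1\le i<j\le n}\frac{\lambda_i-\lambda_j+j-i}{j-i}$ with the hook--content formula for the number of semi-standard tableaux.
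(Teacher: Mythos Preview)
Your argument is correct, but there is nothing in the paper to compare it against: the paper states this theorem as standard background in the representation-theory preliminaries and gives no proof whatsoever. It is simply quoted, in the same way as the Cartan--Weyl highest weight theory immediately preceding it, and then used as a tool later (for instance in Subsections~\ref{Ss-DecM} and~\ref{Ss-DecN}).

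Your sketch is a perfectly good outline of how one would actually prove it. The specialization of the Weyl character formula to $\mathrm{GL}(n,\Q)$ with $\rho$ replaced by $\delta=(n-1,\ldots,1,0)$ is the standard route to the bialternant form of $s_\lambda$, and your deduction of part~(2) via $s_\lambda(1,\ldots,1)$ together with the tableau expansion of the Schur polynomial is the textbook argument. The alternative via Schur--Weyl duality and the Frobenius character formula is also fine. In a paper of this type one would indeed just cite these facts rather than prove them, which is exactly what the authors do.
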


\subsection{Decompositions of tensor products}
\hspace*{\fill}\ 

In this subsection, we recall some decomposition formulae of tensor products.
\begin{thm}[Pieri's formula]\label{thm:pigl}
Let $\mu$ be a partition such that $\ell(\mu) \le n$.
Then
\[ L^{(1^k)} \otimes L^{\mu} \cong \bigoplus_{\lambda}L^\lambda, \]
where $\lambda$ runs over the set of partitions obtained by adding a vertical
$k$-strip to $\mu$ such that $\ell(\lambda) \le n$.
\end{thm}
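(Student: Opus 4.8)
The plan is to prove the identity at the level of characters and then invoke part (5) of Cartan--Weyl's highest weight theory, which guarantees that two rational representations with the same character are isomorphic. By Theorem \ref{thm:dim}(1), the character of $L^{(1^k)}$ is the Schur polynomial $s_{(1^k)}(x_1,\dots,x_n) = e_k(x_1,\dots,x_n)$, the $k$-th elementary symmetric polynomial, and the character of $L^\mu$ is $s_\mu(x_1,\dots,x_n)$. So the content of the theorem is precisely the classical Pieri rule for symmetric polynomials:
\[
 e_k(x_1,\dots,x_n)\, s_\mu(x_1,\dots,x_n) = \sum_\lambda s_\lambda(x_1,\dots,x_n),
\]
where $\lambda$ ranges over partitions obtained from $\mu$ by adding a vertical $k$-strip (at most one box per row), subject to $\ell(\lambda)\le n$ since $s_\lambda$ in $n$ variables vanishes when $\ell(\lambda) > n$. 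This is a special case of the Littlewood--Richardson rule, $s_{(1^k)} s_\mu = \sum_\lambda \LR^\lambda_{(1^k)\,\mu}\, s_\lambda$, with the observation that $\LR^\lambda_{(1^k)\,\mu}$ is $1$ when $\lambda\backslash\mu$ is a vertical $k$-strip and $0$ otherwise.

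First I would reduce everything to the symmetric-function identity as above. Then the cleanest self-contained route is to prove the Pieri rule directly rather than deducing it from the full Littlewood--Richardson rule. I would work with the Jacobi--Trudi / bialternant definition $s_\mu = a_{\mu+\delta}/a_\delta$ where $a_\alpha = \det(x_i^{\alpha_j})$ and $\delta = (n-1,n-2,\dots,1,0)$, and compute $a_\delta \cdot e_k \cdot s_\mu = e_k \cdot a_{\mu+\delta}$. Expanding $e_k$ as $\sum_{|S|=k} \prod_{i\in S} x_i$ and using multilinearity of the determinant $a_{\mu+\delta} = \det(x_i^{\mu_j + n - j})$, one sees that $e_k \cdot a_{\mu+\delta}$ is a sum of determinants $\det(x_i^{\beta_j})$ where $\beta$ is obtained from $\mu+\delta$ by adding $1$ to $k$ distinct coordinates. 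Such a determinant is either zero (if two of the $\beta_j$ coincide) or equals $\pm a_{\lambda+\delta}$ for a unique partition $\lambda$ after sorting; the non-vanishing, distinct-coordinate condition translates exactly into $\lambda\backslash\mu$ being a vertical strip of size $k$, and one checks the sign is always $+1$ because incrementing coordinates and re-sorting a strictly decreasing sequence never introduces a transposition of overall odd sign in this setting. Dividing by $a_\delta$ gives the claimed expansion. Alternatively, if the paper prefers to cite rather than reprove, one may simply invoke the Pieri rule from a standard reference (e.g. Macdonald or Fulton) for symmetric functions and transport it through Theorem \ref{thm:dim}(1).

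The main obstacle is the bookkeeping in the sign-and-sorting argument: showing that each surviving term $\det(x_i^{\beta_j})$ with $\beta$ a rearrangement of some $\lambda+\delta$ contributes with coefficient exactly $+1$, and that distinct subsets $S$ never produce the same $\lambda$, so there is no cancellation and every vertical-strip $\lambda$ appears with multiplicity one. Once that combinatorial lemma is in hand, the passage back to representations is immediate: the two sides have equal characters as polynomials in $x_1,\dots,x_n$, hence (after noting that the terms with $\ell(\lambda) > n$ are absent on both sides) the $\mathrm{GL}(n,\Q)$-modules $L^{(1^k)}\otimes L^\mu$ and $\bigoplus_\lambda L^\lambda$ are isomorphic by Theorem on Cartan--Weyl highest weight theory, part (5). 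I would also remark that the hypothesis $\ell(\mu)\le n$ is exactly what makes $L^\mu$ a well-defined polynomial irreducible representation, and that adding a vertical $k$-strip can increase the length by at most $1$, so the constraint $\ell(\lambda)\le n$ is the only truncation needed.
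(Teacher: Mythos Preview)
The paper does not prove Theorem~\ref{thm:pigl}; it is simply stated as the classical Pieri formula and used as background for the tensor-product computations that follow. Your argument via characters and the bialternant identity is a standard and correct proof of the underlying symmetric-function identity $e_k\,s_\mu=\sum_\lambda s_\lambda$, and the transfer to $\mathrm{GL}(n,\Q)$-modules through Theorem~\ref{thm:dim}(1) and part~(5) of the Cartan--Weyl theorem is exactly right. So you are supplying a proof where the paper gives none.

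Two small remarks. First, your sign argument can be made cleaner: once you note that $e_k\cdot a_{\mu+\delta}=\sum_{\epsilon\in\{0,1\}^n,\ |\epsilon|=k} a_{\mu+\delta+\epsilon}$, the point is that $\mu+\delta$ is strictly decreasing with consecutive gaps at least $1$, so adding a $0$--$1$ vector either creates a repeated entry (determinant zero) or leaves the sequence strictly decreasing. No re-sorting is ever required, so the sign is automatically $+1$; there is no cancellation to track. Second, your closing remark that ``adding a vertical $k$-strip can increase the length by at most $1$'' is false: a vertical strip has at most one box \emph{per row}, so it can add up to $k$ new rows (take $\mu=\emptyset$, $\lambda=(1^k)$). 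You may be thinking of horizontal strips. This does not affect the proof, only the commentary; the truncation $\ell(\lambda)\le n$ is still the correct one, it simply may discard more than a single $\lambda$.
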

\begin{thm}[{\cite[Theorem2.4]{Ko}}]\label{thm:tpgl}
For four partitions $\xi,\eta,\sigma$ and $\tau$ such that $\ell(\xi)+\ell(\eta)+\ell(\sigma)+\ell(\tau) \le n$, we have
\[
L^{\{\xi;\eta\}} \otimes L^{\{\sigma;\tau\}} \cong \bigoplus_{\substack{\lambda, \mu \in \mca{P}, \\[1pt] \ell(\lambda)+\ell(\mu) \le n}}
  \LR_{\{\xi;\eta\},\{\sigma;\tau\}}^{\{\lambda;\mu\}}L^{\{\lambda;\mu\}}.
\]
Here the coefficient $\LR_{\{\xi;\eta\},\{\sigma;\tau\}}^{\{\lambda;\mu\}}$ is defined to be
\[
\sum_{\alpha,\beta,\gamma,\delta \in \mca{P}}\left(\sum_{\kappa \in \mca{P}}\LR_{\kappa\alpha}^{\xi}\LR_{\kappa\beta}^{\tau}\right)\left(\sum_{\pi \in \mca{P}}\LR_{\pi\gamma}^{\eta}\LR_{\pi\delta}^{\sigma}\right)\LR_{\alpha\delta}^{\lambda}\LR_{\beta\gamma}^{\mu}.
\]
\end{thm}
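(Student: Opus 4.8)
\emph{Proof idea (via universal characters).} Since this is Koike's theorem, the paper can simply cite \cite{Ko}; but the argument behind it runs through Koike's formalism of universal characters, and that is the route I would take. The plan is to prove the stated formula first as an identity in a ring of universal characters, where it holds with no restriction on $n$, and then to specialise to $\mathrm{GL}(n,\Q)$ — the length hypothesis on $\xi,\eta,\sigma,\tau$ being exactly what guarantees that the specialisation is lossless on every term that occurs.

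First I would set up, for each pair of partitions $(\lambda;\mu)$, Koike's universal character $S_{[\lambda;\mu]}$: a symmetric function in one ``covariant'' and one ``contravariant'' family of variables, given by a block Jacobi--Trudi--type determinant, and record two properties. (i) The $S_{[\lambda;\mu]}$ are $\Z$-linearly independent, so an identity among them is unambiguous. (ii) \emph{Stable specialisation}: if $n\ge \ell(\lambda)+\ell(\mu)$, then evaluating $S_{[\lambda;\mu]}$ on $x_1,\dots,x_n$ (with the contravariant variables sent to $x_1^{-1},\dots,x_n^{-1}$) returns exactly the character $\chi_{L^{\{\lambda;\mu\}}}$ of the irreducible rational $\mathrm{GL}(n,\Q)$-module. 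Property (ii) I would deduce from the Weyl character formula: when $\ell(\lambda)+\ell(\mu)\le n$ the highest weight of $L^{\{\lambda;\mu\}}$ is $(\lambda_1,\dots,\lambda_p,0,\dots,0,-\mu_q,\dots,-\mu_1)$, so $L^{\{\lambda;\mu\}}\cong \det^{-\mu_1}\otimes L^{\nu}$ for a suitable partition $\nu$ with $\ell(\nu)\le n$, whence $\chi_{L^{\{\lambda;\mu\}}}=(x_1\cdots x_n)^{-\mu_1}s_\nu(x_1,\dots,x_n)$ by Theorem \ref{thm:dim}, and one checks this coincides with the specialised block determinant.

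Second I would establish the multiplication rule for universal characters,
\[
S_{[\xi;\eta]}\cdot S_{[\sigma;\tau]}=\sum_{\lambda,\mu \in \mca{P}}\left(\sum_{\alpha,\beta,\gamma,\delta \in \mca{P}}\left(\sum_{\kappa}\LR_{\kappa\alpha}^{\xi}\LR_{\kappa\beta}^{\tau}\right)\left(\sum_{\pi}\LR_{\pi\gamma}^{\eta}\LR_{\pi\delta}^{\sigma}\right)\LR_{\alpha\delta}^{\lambda}\LR_{\beta\gamma}^{\mu}\right)S_{[\lambda;\mu]},
\]
a pure symmetric-function identity valid for all $n$. The idea is to expand each $S_{[\xi;\eta]}$ as a \emph{signed} sum of products $s_\alpha(x)\,s_\beta(\bar x)$ of ordinary Schur functions in the covariant and contravariant variables (inverting the Jacobi--Trudi determinant), multiply two such expansions using the ordinary Littlewood--Richardson rule separately in the $x$'s and in the $\bar x$'s, and then re-collect the result into universal characters; the auxiliary partitions $\kappa$ and $\pi$ enter as the ``internal contraction'' indices pairing a covariant factor of one tensor against a contravariant factor of the other (the representation-theoretic shadow of $\triv \subset H_\Q^*\otimes H_\Q$), and the alternating signs must be shown to cancel so as to leave the displayed non-negative coefficient. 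This sign-cancellation, together with the determinantal bookkeeping, is the technical heart and the step I expect to be the main obstacle — it is precisely the content of \cite[\S 2]{Ko}.

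Finally I would invoke the length hypothesis. If $\LR_{\kappa\alpha}^{\xi}\ne 0$ then $\xi\supseteq\alpha$, so $\ell(\alpha)\le\ell(\xi)$; if $\LR_{\alpha\delta}^{\lambda}\ne 0$ then $\ell(\lambda)\le\ell(\alpha)+\ell(\delta)$, since $s_\alpha s_\delta$ is already nonzero in $\ell(\alpha)+\ell(\delta)$ variables; and likewise for the $\mu$-factor. Hence every pair $[\lambda;\mu]$ with nonzero coefficient on the right has $\ell(\lambda)\le\ell(\xi)+\ell(\sigma)$ and $\ell(\mu)\le\ell(\eta)+\ell(\tau)$, so under $\ell(\xi)+\ell(\eta)+\ell(\sigma)+\ell(\tau)\le n$ every universal character occurring on either side of the multiplication rule meets the hypothesis of property (ii). Applying the specialisation to $x_1,\dots,x_n$ therefore turns the identity into
\[
\chi_{L^{\{\xi;\eta\}}}\cdot\chi_{L^{\{\sigma;\tau\}}}=\sum_{\substack{\lambda,\mu \in \mca{P},\\ \ell(\lambda)+\ell(\mu)\le n}}\LR_{\{\xi;\eta\},\{\sigma;\tau\}}^{\{\lambda;\mu\}}\,\chi_{L^{\{\lambda;\mu\}}},
\]
and since a rational $\mathrm{GL}(n,\Q)$-module is completely reducible and is determined up to isomorphism by its character (Cartan--Weyl highest weight theory, parts $(1)$ and $(5)$), this is the asserted tensor product decomposition.
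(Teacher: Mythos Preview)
The paper does not prove this statement at all: Theorem~\ref{thm:tpgl} is quoted verbatim from \cite[Theorem~2.4]{Ko} and used as a black box, with no argument supplied. You recognise this at the outset, so there is no discrepancy in approach to discuss.

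Your sketch of the underlying argument is faithful to Koike's method and the length-bound step is correct: from $\LR_{\kappa\alpha}^{\xi}\neq 0$ and $\LR_{\pi\delta}^{\sigma}\neq 0$ one gets $\ell(\alpha)\le\ell(\xi)$, $\ell(\delta)\le\ell(\sigma)$, whence $\ell(\lambda)\le\ell(\xi)+\ell(\sigma)$, and symmetrically $\ell(\mu)\le\ell(\eta)+\ell(\tau)$, so the hypothesis $\ell(\xi)+\ell(\eta)+\ell(\sigma)+\ell(\tau)\le n$ indeed forces $\ell(\lambda)+\ell(\mu)\le n$ for every term with nonzero coefficient. The one place you rightly flag as the technical heart --- the sign cancellation in re-expanding a product of block Jacobi--Trudi determinants back into the $S_{[\lambda;\mu]}$ basis --- is exactly what \cite[\S2]{Ko} does, and you would need to reproduce that computation (or cite it) to turn this into a self-contained proof. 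For the purposes of the present paper, however, the citation alone suffices.
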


\begin{cor}\label{cor:dtp}
Assume that $1+\ell(\sigma) \le n$. Then we have
\[
L^{\{0;(1)\}} \otimes L^{\{\sigma;0\}} \cong L^{\{\sigma;(1)\}} \oplus \bigoplus_{\lambda,\ell(\lambda) \le n} \LR_{(1),\lambda}^{\sigma}L^{\{\lambda;0\}}.
\]
\end{cor}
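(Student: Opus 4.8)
The plan is to specialize Theorem \ref{thm:tpgl} to the case $\xi = \eta = \tau = 0$ and $\sigma$ arbitrary (with the auxiliary partition in the second factor being $(1)$, i.e.\ we take $\{\sigma;\tau\} = \{\sigma;0\}$ and $\{\xi;\eta\} = \{0;(1)\}$ — note that in the statement the roles are arranged so that the ``$(1)$'' sits in the negative slot of the first factor, so I would set $\eta=(1)$ and $\xi=\sigma=\tau=0$ after relabelling, whichever matches the convention $(L^{\{\lambda;\mu\}})^* \cong L^{\{\mu;\lambda\}}$ used above). The point is that almost every Littlewood--Richardson coefficient appearing in the big sum collapses: $\LR_{\kappa\alpha}^{0}$ forces $\kappa = \alpha = 0$, and likewise the vanishing of three of the four ``outer'' partitions trivializes the inner double sums $\sum_\kappa$ and $\sum_\pi$, each of which reduces to a single term equal to $1$.

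The key steps, in order: first, substitute $\xi=\sigma=\tau=0$, $\eta=(1)$ into the formula for $\LR_{\{\xi;\eta\},\{\sigma;\tau\}}^{\{\lambda;\mu\}}$ and observe that $\LR_{\kappa\alpha}^{\xi}=\LR_{\kappa 0}^{0}$ is nonzero only for $\kappa=\alpha=0$, that $\LR_{\pi\delta}^{\sigma}=\LR_{\pi\delta}^{0}$ forces $\pi=\delta=0$, and that $\LR_{\kappa\beta}^{\tau}=\LR_{0\beta}^{0}$ forces $\beta=0$. Second, with $\alpha=\beta=\pi=\delta=0$ the only surviving index is $\gamma$, and the remaining factors are $\LR_{0\gamma}^{\eta}=\LR_{0\gamma}^{(1)}=[\gamma=(1)]$, $\LR_{\alpha\delta}^{\lambda}=\LR_{00}^{\lambda}=[\lambda=0]$... wait, that would kill too much, so in fact I must be careful about which factor carries $\sigma$: the correct reading is that $\sigma$ appears as $\LR_{\pi\delta}^{\sigma}$ with $\pi=0$ forced by $\eta$ having a single box, giving $\LR_{0\delta}^{\sigma}=[\delta=\sigma]$, and then $\LR_{\alpha\delta}^{\lambda}=\LR_{0\sigma}^{\lambda}=[\lambda=\sigma]$ while $\LR_{\beta\gamma}^{\mu}=\LR_{0\gamma}^{\mu}=[\mu=\gamma]$, with $\gamma$ ranging so that $\LR_{\pi\gamma}^{\eta}=\LR_{0\gamma}^{(1)}$ is replaced by the genuinely nontrivial factor coming from the other inner sum. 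Third, collecting the two genuinely surviving contributions — one where the box from the ``$(1)$'' is absorbed into the positive part (giving $L^{\{\sigma;(1)\}}$, i.e.\ the box stays in the negative slot) and one where it pairs against a box of $\sigma$ via a contraction (giving the terms $\LR_{(1),\lambda}^{\sigma}L^{\{\lambda;0\}}$) — yields exactly the claimed decomposition. Finally, I would check the hypothesis $1+\ell(\sigma)\le n$ is precisely what is needed to invoke Theorem \ref{thm:tpgl} with $\ell(\xi)+\ell(\eta)+\ell(\sigma)+\ell(\tau)=\ell(\sigma)+1\le n$, so no stability issue arises.

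The main obstacle I anticipate is purely bookkeeping: disentangling which of the four LR-factors $\LR_{\alpha\delta}^{\lambda}$, $\LR_{\beta\gamma}^{\mu}$ and the two inner sums actually retains $\sigma$ after three of $\xi,\eta,\tau$ (or $\sigma$) are set to zero, and making sure the single box of $(1)$ is routed correctly so that it either survives as a negative box (the $L^{\{\sigma;(1)\}}$ term) or gets contracted against $\sigma$ (the $\LR_{(1),\lambda}^{\sigma}$ terms, which is the classical statement that $V^*\otimes W$ decomposes into $\{W\text{ with a box added to the dual part}\}$ plus $\{W\text{ with a box removed}\}$ when $V=H_\Q$). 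Since $\LR_{(1),\lambda}^{\sigma}$ is $1$ if $\sigma$ is obtained from $\lambda$ by adding a single box and $0$ otherwise, the second summand is really a multiplicity-free sum over the at most $\ell(\sigma)$ partitions $\lambda$ obtained by deleting an outer corner of $\sigma$, and I would remark on this to make the corollary immediately usable in the sequel. Everything else is a direct, if slightly tedious, substitution into Theorem \ref{thm:tpgl}.
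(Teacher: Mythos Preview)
Your approach---specialize Theorem~\ref{thm:tpgl}---is exactly what the paper does, and your intuition that the two surviving terms correspond to ``the box stays in the negative slot'' versus ``the box contracts against $\sigma$'' is correct. But the bookkeeping in your write-up is tangled and contains a genuine error, not just a typo. The correct substitution is $\xi=0$, $\eta=(1)$, $\tau=0$, and $\sigma$ equal to the \emph{given} partition; you repeatedly set $\sigma=0$, which is where things go off the rails. With $\xi=\tau=0$ the factor $\LR_{\kappa\alpha}^{\xi}\LR_{\kappa\beta}^{\tau}$ forces $\kappa=\alpha=\beta=0$; then $\LR_{0\delta}^{\lambda}$ and $\LR_{0\gamma}^{\mu}$ force $\delta=\lambda$ and $\gamma=\mu$, collapsing the whole expression to $\sum_{\pi}\LR_{\pi\mu}^{(1)}\LR_{\pi\lambda}^{\sigma}$. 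Here $\pi$ is \emph{not} forced to be $0$ as you claim: the condition $\LR_{\pi\mu}^{(1)}\ne 0$ only says $|\pi|+|\mu|=1$, so the two cases are $(\pi,\mu)=(0,(1))$, contributing $\delta_{\lambda,\sigma}$, and $(\pi,\mu)=((1),0)$, contributing $\LR_{(1),\lambda}^{\sigma}$. That dichotomy on $\pi$ is precisely the source of your two terms---not ``forcing $\pi=0$ and hoping something nontrivial survives elsewhere.'' Once you straighten out which letter carries $\sigma$ and stop forcing $\pi=0$, your plan becomes the paper's one-line computation.
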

\begin{proof}
Under the notation of Theorem \ref{thm:tpgl}, $\xi=\tau=(0)$, $\eta=(1)$ and $\kappa=\alpha=\beta=(0)$.
Thus we obtain
\begin{eqnarray*}
 \LR_{\{0;(1)\},\{\sigma;0\}}^{\{\lambda;\mu\}}
   &=& \sum_{\gamma,\delta}\left(\sum_{\pi}\LR_{\pi\gamma}^{(1)}\LR_{\pi\delta}^{\sigma}\right)\LR_{0\delta}^{\lambda}\LR_{0\gamma}^{\mu}, \\
   &=& \sum_{\pi}\LR_{\pi\mu}^{(1)}\LR_{\pi\lambda}^{\sigma}=\delta_{\mu,(1)}\delta_{\sigma,\lambda}+\LR_{(1),\lambda}^{\sigma}\delta_{0,\mu}.
\end{eqnarray*}
\end{proof}

\begin{cor}[multiplicities of trivial representation]\label{cor:mtv}
If $\ell(\xi)+\ell(\eta)+\ell(\sigma)+\ell(\tau) \le n$, then $[L^{(0)}:L^{\{\xi;\eta\}} \otimes L^{\{\sigma;\tau\}}]=\delta_{\xi, \tau}\delta_{\eta, \sigma}$
where $\delta_{a,b}$ is Kronecker's delta. 
\end{cor}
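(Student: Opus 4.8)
The plan is to deduce Corollary \ref{cor:mtv} directly from Corollary \ref{cor:dtp}'s parent, namely Theorem \ref{thm:tpgl}, by extracting the multiplicity of the trivial representation $L^{(0)}=L^{\{0;0\}}$. Setting $\lambda=\mu=(0)$ in the decomposition $L^{\{\xi;\eta\}} \otimes L^{\{\sigma;\tau\}} \cong \bigoplus \LR_{\{\xi;\eta\},\{\sigma;\tau\}}^{\{\lambda;\mu\}}L^{\{\lambda;\mu\}}$, the multiplicity we want is precisely the coefficient $\LR_{\{\xi;\eta\},\{\sigma;\tau\}}^{\{0;0\}}$. So the whole task reduces to evaluating the explicit formula
\[
\LR_{\{\xi;\eta\},\{\sigma;\tau\}}^{\{0;0\}} = \sum_{\alpha,\beta,\gamma,\delta \in \mca{P}}\left(\sum_{\kappa \in \mca{P}}\LR_{\kappa\alpha}^{\xi}\LR_{\kappa\beta}^{\tau}\right)\left(\sum_{\pi \in \mca{P}}\LR_{\pi\gamma}^{\eta}\LR_{\pi\delta}^{\sigma}\right)\LR_{\alpha\delta}^{(0)}\LR_{\beta\gamma}^{(0)}
\]
and showing it equals $\delta_{\xi,\tau}\delta_{\eta,\sigma}$.

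First I would use the elementary fact that $\LR_{\alpha\delta}^{(0)} = \delta_{\alpha,(0)}\delta_{\delta,(0)}$ and likewise $\LR_{\beta\gamma}^{(0)}=\delta_{\beta,(0)}\delta_{\gamma,(0)}$, since the product of two Schur functions is the empty Schur function $s_{(0)}=1$ only when both factors are $1$. This collapses the quadruple sum to the single term $\alpha=\beta=\gamma=\delta=(0)$, leaving
\[
\LR_{\{\xi;\eta\},\{\sigma;\tau\}}^{\{0;0\}} = \left(\sum_{\kappa}\LR_{\kappa,(0)}^{\xi}\LR_{\kappa,(0)}^{\tau}\right)\left(\sum_{\pi}\LR_{\pi,(0)}^{\eta}\LR_{\pi,(0)}^{\sigma}\right).
\]
Then I would invoke $\LR_{\kappa,(0)}^{\xi}=\delta_{\kappa,\xi}$ (multiplying by $s_{(0)}=1$ leaves $s_\xi$ unchanged, so $s_\kappa = s_\xi$ forces $\kappa=\xi$), which reduces the first factor to $\sum_\kappa \delta_{\kappa,\xi}\delta_{\kappa,\tau}=\delta_{\xi,\tau}$, and similarly the second factor to $\delta_{\eta,\sigma}$. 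Multiplying gives $\delta_{\xi,\tau}\delta_{\eta,\sigma}$, as claimed.

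There is essentially no obstacle here: the only things to be careful about are the length hypothesis $\ell(\xi)+\ell(\eta)+\ell(\sigma)+\ell(\tau)\le n$, which is exactly what is needed to apply Theorem \ref{thm:tpgl} (and it automatically covers $\ell(\lambda)+\ell(\mu)\le n$ for $\lambda=\mu=(0)$), and the bookkeeping identification $L^{(0)}=L^{\{0;0\}}$. Alternatively — and this might be cleaner to write — one can avoid the explicit Littlewood-Richardson formula entirely: by Theorem \ref{thm:tpgl} we have $[L^{\{0;0\}}: L^{\{\xi;\eta\}}\otimes L^{\{\sigma;\tau\}}] = \dim \Hom_G(L^{\{\sigma;\tau\}}{}^*, L^{\{\xi;\eta\}})$ using that the trivial module summands of $V\otimes W$ count $\Hom_G(W^*,V)$, and then by the remark $L^{\{\sigma;\tau\}}{}^* \cong L^{\{\tau;\sigma\}}$, so Schur's lemma plus the uniqueness in Cartan-Weyl's highest weight theorem gives $\dim\Hom_G(L^{\{\tau;\sigma\}},L^{\{\xi;\eta\}}) = \delta_{\{\xi;\eta\},\{\tau;\sigma\}} = \delta_{\xi,\tau}\delta_{\eta,\sigma}$. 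I expect to present the short combinatorial computation as the main proof since it keeps everything within the Littlewood-Richardson framework already set up, with the representation-theoretic argument as a one-line alternative.
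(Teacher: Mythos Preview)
Your main argument is correct and is exactly the paper's proof: set $\lambda=\mu=(0)$ in Theorem~\ref{thm:tpgl}, use $\LR_{\alpha\delta}^{(0)}=\delta_{\alpha,(0)}\delta_{\delta,(0)}$ to force $\alpha=\beta=\gamma=\delta=(0)$, then $\LR_{\kappa,(0)}^{\xi}=\delta_{\kappa,\xi}$ to force $\kappa=\xi=\tau$ and $\pi=\eta=\sigma$. The paper's proof is just a terser version of what you wrote; your Schur's lemma alternative is a nice extra but not needed.
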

\begin{proof}
Under the notation of Theorem \ref{thm:tpgl}, $\alpha=\beta=\gamma=\delta=(0)$. Thus $\kappa=\xi=\tau$ and $\pi=\eta=\sigma$.
We obtain $\LR_{\{\xi;\eta\},\{\sigma;\tau\}}^{\{0;0\}}=\delta_{\xi, \tau}\delta_{\eta, \sigma}$.
\end{proof}

\subsection{Schur-Weyl duality}
\hspace*{\fill}\ 

For the natural representation $H_{\Q} \cong L^{(1,0, \ldots, 0)}$ of $\mathrm{GL}(n,\Q)$,
we consider the $m$-th tensor product representation
$\mathrm{GL}(n,\Q) \to \mathrm{GL}(H^{\otimes{m}})$ of $H_{\Q}$.
The symmetric group $\mf{S}_m$ acts on $(H_{\Q})^{\otimes{m}}$ by $\sigma \cdot (v_1 \otimes \cdots \otimes v_m)=v_{\sigma(1)} \otimes \cdots v_{\sigma(m)}$
for $\sigma \in \mf{S}_m$. Since these two actions are commutative, we can decompose $H^{\otimes{m}}$ as
$\mathrm{GL}(n,\Q) \times \mf{S}_m$-module. Let us recall this irreducible decomposition, called the Schur-Weyl duality for $\mathrm{GL}(n,\Q)$ and $\mf{S}_m$.

\begin{thm}[Schur-Weyl duality for $\mathrm{GL}(n,\Q)$ and $\mf{S}_k$]\label{thm:SWgl}\quad 
\begin{enumerate}[$(1)$]
\item Let $\lambda$ be a partition of $m$ such that $\ell(\lambda) \le n$. 
There exists a non-zero maximal vector $v_\lambda$ with weight $\lambda$ satisfying the following three conditions;
\begin{enumerate}
\item[$\mathrm{(i)}$] The $\mf{S}_m$-invariant subspace $S^\lambda:=\sum_{\sigma \in \mf{S}_m}\Q\sigma{v_\lambda}$
                       gives an irreducible representation of $\mf{S}_m$, 
\item[$\mathrm{(ii)}$] The subspace $(H_{\Q}^{\otimes{m}})^U_\lambda$ of weight $\lambda$ coincides with the subspace $S^\lambda$,
\item[$\mathrm{(iii)}$] The $\mathrm{GL}(n,\Q)$-module generated by $v_\lambda$ is isomorphic to the irreducible representation
                      $L^\lambda$ of $\mathrm{GL}(n,\Q)$ associated to $\lambda$.
\end{enumerate}
\item We have the irreducible decomposition:
\[
H_{\Q}^{\otimes{m}} \cong \bigoplus_{\lambda=(\lambda_1 \ge \cdots \ge \lambda_n\ge 0) \vdash m}L^\lambda \boxtimes S^\lambda.
\]
as $\mathrm{GL}(n,\Q) \times \mf{S}_m$-modules.
\item Suppose $n \ge m$. Then $\{S^\lambda \ | \ \lambda \vdash m\}$ gives a complete representatives of irreducible representations of $\mf{S}_m$.
\end{enumerate}
\end{thm}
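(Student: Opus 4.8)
The plan is to run the classical double centralizer argument, made explicit by the highest weight theory recalled above. Write $E := \End_{\Q}(H_{\Q}^{\otimes m})$, let $A \subseteq E$ be the image of $\Q[\mathrm{GL}(n,\Q)]$ (the span of the operators $g^{\otimes m}$), and let $B \subseteq E$ be the image of $\Q\mf{S}_m$ (the span of the place permutations). The two subalgebras commute, and $B$ is semisimple because $\Q\mf{S}_m$ is (Maschke, $\mathrm{char}\,\Q = 0$). The substantial input is the equality $A = \End_B(H_{\Q}^{\otimes m})$, i.e.\ that every $\mf{S}_m$-equivariant endomorphism of $H_{\Q}^{\otimes m}$ lies in $A$; granting it, the double centralizer theorem applied to the semisimple algebra $B$ yields that $A$ is semisimple, that $B = \End_A(H_{\Q}^{\otimes m})$, and that $H_{\Q}^{\otimes m} \cong \bigoplus_{S} S \boxtimes \Hom_A(S, H_{\Q}^{\otimes m})$ as $A \otimes B$-modules, the sum running over the simple $A$-modules $S$ occurring in $H_{\Q}^{\otimes m}$, with the multiplicity spaces $\Hom_A(S, H_{\Q}^{\otimes m})$ pairwise non-isomorphic simple $B$-modules. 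Since $A$ is the image of $\mathrm{GL}(n,\Q)$, its simple modules occurring here are, by the Cartan--Weyl theorem, exactly the $L^\lambda$ with $\lambda \vdash m$ and $\ell(\lambda) \le n$: the weights of $H_{\Q}^{\otimes m}$ are the maps $\{1,\dots,n\}\to\Z_{\ge0}$ of sum $m$, whose dominant representatives are precisely such partitions, and each is realized below by a genuine maximal vector, so each $L^\lambda$ does occur. Setting $S^\lambda := \Hom_{\mathrm{GL}(n,\Q)}(L^\lambda, H_{\Q}^{\otimes m})$, this is exactly statement (2), and part (1)(i) is the simplicity of $S^\lambda$ just obtained.

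For the centralizer equality I would argue as in the first fundamental theorem of invariant theory for $\mathrm{GL}_n$, in the weak form actually needed. Identifying $E$ with $(H_{\Q}^* \otimes H_{\Q})^{\otimes m} = (\End_{\Q} H_{\Q})^{\otimes m}$, on which $\mf{S}_m$ acts by simultaneously permuting the $m$ tensor slots, one has $\End_B(H_{\Q}^{\otimes m}) = E^{\mf{S}_m} = S^m(\End_{\Q} H_{\Q})$. Over the infinite field $\Q$ the symmetric power $S^m(W)$ of any finite-dimensional $W$ is spanned by the pure tensors $w^{\otimes m}$ (polarization), so $E^{\mf{S}_m}$ is spanned by the operators $v_1\otimes\cdots\otimes v_m \mapsto Av_1\otimes\cdots\otimes Av_m$ with $A \in \End_{\Q} H_{\Q}$; and since $\mathrm{GL}(n,\Q)$ is Zariski-dense in $\End_{\Q} H_{\Q} \cong \Q^{n^2}$ (the complement of $\det = 0$) while $A \mapsto A^{\otimes m}$ is polynomial in the matrix entries, the span over invertible $A$ agrees with the span over all $A$, which is $A$. \textbf{I expect this polarization-plus-density step to be the main (indeed the only non-formal) obstacle}; everything else is bookkeeping around the double centralizer theorem and the highest weight theory.

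To make part (1) explicit I would exhibit $v_\lambda$ directly. Numbering the boxes of the Young diagram of $\lambda$ by $1,\dots,m$, let $b_\lambda \in \Q\mf{S}_m$ be the antisymmetrizer $\sum_{\sigma}\sgn(\sigma)\,\sigma$ over the subgroup preserving each column, and let $v_\lambda$ be $b_\lambda$ applied to the basis tensor of $H_{\Q}^{\otimes m}$ whose $j$-th factor is $x_i$ precisely when box $j$ lies in the $i$-th row. Then $v_\lambda$ is a weight vector of weight $\lambda$ (place permutations preserve the multiplicity of each $x_i$, and $x_i$ occurs $\lambda_i$ times), it is nonzero (that basis tensor survives with coefficient $1$, with nothing cancelling it), and it is a maximal vector: since $g^{\otimes m}$ commutes with $b_\lambda$, acting by an upper unitriangular $u$ replaces each factor $x_i$ by $x_i$ plus a combination of $x_k$ with $k<i$, and every resulting term other than the original one places two equal basis vectors in some column, hence is annihilated by $b_\lambda$, so $u\cdot v_\lambda = v_\lambda$. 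Being a maximal vector of dominant weight $\lambda$, $v_\lambda$ generates a $\mathrm{GL}(n,\Q)$-submodule isomorphic to $L^\lambda$ by the highest weight theory — this is (1)(iii). For (1)(ii), evaluation at a highest weight vector of the abstract $L^\lambda$ identifies $\Hom_{\mathrm{GL}(n,\Q)}(L^\lambda, H_{\Q}^{\otimes m})$, $\mf{S}_m$-equivariantly (with $\mf{S}_m$ acting by post-composition), with the space $(H_{\Q}^{\otimes m})^U_\lambda$ of weight-$\lambda$ maximal vectors (injective since such a vector generates $L^\lambda$, surjective by (1)(iii)); this space is therefore a simple $\mf{S}_m$-module, and being $\mf{S}_m$-stable, nonzero and containing $v_\lambda$, it equals $\sum_{\sigma\in\mf{S}_m}\Q\,\sigma v_\lambda = S^\lambda$. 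This confirms (1)(i),(ii) with this concrete $S^\lambda$ and matches it with the multiplicity space of $L^\lambda$ in (2).

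Finally, for part (3): when $n \ge m$ the condition $\ell(\lambda)\le n$ is vacuous for $\lambda\vdash m$, so by (2) the $S^\lambda$ with $\lambda\vdash m$ are $p(m)$ pairwise non-isomorphic simple $\mf{S}_m$-modules, where $p(m)$ is the number of partitions of $m$; since $\Q$ is a splitting field for $\mf{S}_m$, the number of isomorphism classes of simple $\Q\mf{S}_m$-modules equals the number of conjugacy classes, namely $p(m)$, so $\{S^\lambda : \lambda\vdash m\}$ is a complete set of representatives. The one spot where a short computation is unavoidable is the $U$-invariance of $v_\lambda$ above, via the remark that within a single column of the diagram the only choice, for each box, of its own row-index or a smaller one that avoids a repeated value is the choice of all the row-indices.
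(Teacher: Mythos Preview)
The paper does not prove this theorem: it is stated in the preliminary Section~3 as the classical Schur--Weyl duality, without proof, and is then used as a black box (e.g.\ in Proposition~\ref{prop:1}). So there is no ``paper's own proof'' to compare against.

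Your argument is the standard double-centralizer proof and is correct. The key nontrivial step---that the image of $\Q[\mathrm{GL}(n,\Q)]$ equals $\End_{\mf{S}_m}(H_{\Q}^{\otimes m})$---is handled correctly via the identification $\End_{\mf{S}_m}(H_{\Q}^{\otimes m}) \cong S^m(\End_{\Q}H_{\Q})$, polarization (pure powers span symmetric powers over an infinite field), and Zariski density of $\mathrm{GL}(n,\Q)$ in $\End_{\Q}H_{\Q}$. Your explicit $v_\lambda$ (column antisymmetrizer applied to the ``row-tableau'' tensor) is exactly the right object, and your verification that it is $U$-invariant is correct: the pigeonhole remark in your last sentence is precisely what is needed, since in a column of length $r$ the entries after acting by $u$ are $k_1\le 1,\,k_2\le 2,\ldots,k_r\le r$, and if these are distinct they must be $1,\ldots,r$. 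The identification $(H_{\Q}^{\otimes m})^U_\lambda \cong \Hom_{\mathrm{GL}(n,\Q)}(L^\lambda,H_{\Q}^{\otimes m})$ via evaluation at a highest weight vector, and the counting argument for part~(3), are both fine.

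One minor wording issue: in part~(3) you invoke that $\Q$ is a splitting field for $\mf{S}_m$. This is true (the irreducible representations of $\mf{S}_m$ are realizable over $\Q$, e.g.\ via Specht modules or Young symmetrizers), but you might note it explicitly, since over a non-splitting field the number of simple modules need not equal the number of conjugacy classes.
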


\subsection{Combinatorial description of branching lows from $\mf{S}_m$ to $\Cyc_m$}
\hspace*{\fill}\ 

Let $\Cyc_m$ be a cyclic group of order $m$. Take a generator $\sigma_m$ of $\Cyc_m$ and a primitive $m$-th root $\zeta_m \in \C$ of unity.
In this section, we consider representations of the cyclic group $\Cyc_m$ over an intermediate field $\Q(\zeta_m) \subset \K \subset \C$.

\vspace{0.5em}

We define one-dimensional representations (or characters) $\chi_m^j:\Cyc_m \to \mb{K}^\times$ by $\chi_m^j(\sigma_m):=\zeta_m^j$ for $0 \le j \le m-1$.
Especially, we denote the trivial representation $\chi_m^0$ by $\triv_m$.
The set of isomorphism classes of irreducible representations of $\Cyc_m$ is given by $\{\chi_m^j, \,\, 0 \le j \le m-1 \}$.

\vspace{0.5em}

Consider $\Cyc_m$ as a subgroup of $\mf{S}_m$ by an embedding $\sigma_m^i \mapsto (1 \, 2 \, \cdots \, m)^i$ for $0 \leq i \leq m-1$.
Let us recall Kra\'{s}kiewicz-Weyman's combinatorial description for the branching rules of irreducible $\mf{S}_m$-modules $S^\lambda$ to the cyclic subgroup $\Cyc_m$.
To do this, first we define a major index of a standard tableau.
For a standard tableau $T$, we define the descent set of $T$ to be the set of entries $i$ in $T$ such that $i+1$ is located in a lower row than that
which $i$ is located. We denote by $D(T)$ the descent set of $T$.
The major index of $T$ is defined by
\[ \maj(T):=\sum_{i \in D(T)}i. \]
If $D(T)=\varphi$, we set $\maj(T)=0$.

\begin{thm}[{\cite{KW}, \cite[Theorem 8.8, 8.9]{Reu}, \cite[Theorem 8.4]{Ga}}]\label{thm:KW}
The multiplicity of $\chi_m^j$ in $\Res_{\Cyc_m}^{\mf{S}_m}S^\lambda$ is equal to the number of standard tableau with shape $\lambda$ satisfying $\maj(T) \equiv j$ modulo $m$.
\end{thm}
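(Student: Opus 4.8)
The plan is to reduce the statement to a single family of character identities and then to establish those by a symmetric-function computation, avoiding the theory of rim hooks entirely. Write $c:=(1\,2\,\cdots\,m)\in\mf{S}_m$ for the chosen generator of $\Cyc_m$, let $\chi^\lambda$ be the character of the Specht module $S^\lambda$ for $\lambda\vdash m$, and put $A^\lambda(q):=\sum_{T}q^{\maj(T)}$, the sum over standard tableaux of shape $\lambda$. Since $\Cyc_m$ is abelian and we work over a field containing $\zeta_m$, the multiplicity of $\chi_m^j$ in $\Res_{\Cyc_m}^{\mf{S}_m}S^\lambda$ (equivalently, by Frobenius reciprocity, of $S^\lambda$ in $\Ind_{\Cyc_m}^{\mf{S}_m}\chi_m^j$) is $\frac1m\sum_{i=0}^{m-1}\chi^\lambda(c^i)\,\zeta_m^{-ij}$. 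On the other hand, $\zeta_m^{i\maj(T)}$ depends only on $\maj(T)$ modulo $m$, so $A^\lambda(\zeta_m^i)=\sum_{r=0}^{m-1}a_r\,\zeta_m^{ir}$ with $a_r:=\#\{T:\maj(T)\equiv r\bmod m\}$. Comparing these two discrete Fourier expansions over $\Z/m\Z$, the theorem is equivalent to the identities
\[
\chi^\lambda(c^i)=A^\lambda(\zeta_m^i),\qquad 0\le i\le m-1 .
\]
Because $c^i$ has cycle type $((m/d)^{d})$ with $d=\gcd(i,m)$ while $\zeta_m^i$ is a primitive $(m/d)$-th root of unity, it then suffices, writing $f:=m/d$ (so $fd=m$), to prove: for every divisor $f$ of $m$ and every primitive $f$-th root of unity $\zeta$,
\[
A^\lambda(\zeta)=\chi^\lambda_{(f^{d})},
\]
the value of $\chi^\lambda$ on a permutation of cycle type $(f^{d})$.

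I would prove this last identity inside the ring of symmetric functions. Recall the principal specialization $s_\lambda(1,q,q^2,\dots)=q^{n(\lambda)}/\prod_{u\in\lambda}(1-q^{h(u)})$ (with $n(\lambda)=\sum_i(i-1)\lambda_i$ and $h(u)$ the hook length of the cell $u$), which by the $q$-analogue of the hook length formula equals $A^\lambda(q)/(q;q)_m$, where $(q;q)_m:=\prod_{k=1}^m(1-q^k)$. Feeding this into the principally specialized Cauchy identity $\prod_{i\ge1}\prod_{j\ge0}(1-x_iq^j)^{-1}=\sum_\lambda s_\lambda(x)\,s_\lambda(1,q,q^2,\dots)$ and the power-sum expansion $\prod_{i,j}(1-x_iq^j)^{-1}=\sum_\mu \frac{p_\mu(x)}{z_\mu\prod_i(1-q^{\mu_i})}$, and comparing the homogeneous degree-$m$ parts in the $x$-variables, one obtains the identity of formal power series in $q$ with symmetric-function coefficients
\[
\sum_{\lambda\vdash m}A^\lambda(q)\,s_\lambda \;=\; (q;q)_m\sum_{\mu\vdash m}\frac{p_\mu}{z_\mu\prod_i(1-q^{\mu_i})} .
\]
Now let $q\to\zeta$. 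The left-hand side is regular there. On the right, $(q;q)_m$ vanishes to order exactly $d$ at $\zeta$ (the $d$ factors $1-q^{f},1-q^{2f},\dots,1-q^{df}$), while the $\mu$-summand has a pole of order $\#\{i:f\mid\mu_i\}$, which for $\mu\vdash m$ is at most $d$ and equals $d$ only when $\mu=(f^{d})$. Hence every summand but $\mu=(f^{d})$ dies in the limit, and a short residue computation — using $\lim_{q\to\zeta}\frac{1-q^{sf}}{1-q^f}=s$ together with the elementary fact $\prod_{r=1}^{f-1}(1-\zeta^r)=f$ — yields $\sum_{\lambda\vdash m}A^\lambda(\zeta)\,s_\lambda=p_f^{\,d}=p_{(f^{d})}$. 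Comparing the coefficient of $s_\lambda$ on both sides with the Frobenius formula $p_\mu=\sum_\lambda\chi^\lambda_\mu\,s_\lambda$ gives $A^\lambda(\zeta)=\chi^\lambda_{(f^{d})}$, which is exactly what was required.

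Conceptually the argument is short, but two points demand care and I expect the second to be the real obstacle. First, one must fix the precise normalization of the $q$-hook length formula — the $q^{n(\lambda)}$ prefactor and the $\maj$ (rather than $\mathrm{comaj}$) convention — since an inadvertent transposition would produce $\chi^{\lambda'}$ in place of $\chi^\lambda$. Second, and more delicately, the passage $q\to\zeta$ requires carefully matching orders of vanishing against orders of poles, verifying that $(f^{d})$ is the unique surviving index and that the residue evaluates to exactly $p_f^{\,d}$. A more classical alternative bypasses symmetric functions: evaluate $\chi^\lambda_{(f^{d})}$ by iterating the Murnaghan--Nakayama rule (removing $d$ successive rim $f$-hooks) and compare with the evaluation of the $q$-hook length formula at $\zeta$, both being governed by the $f$-core/$f$-quotient decomposition of $\lambda$; this is closer in spirit to the proofs cited from Reutenauer and Garsia, but demands the full apparatus of cores, quotients, the abacus, and their sign conventions, whereas the generating-function route keeps the bookkeeping to a minimum.
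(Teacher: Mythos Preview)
The paper does not prove this theorem at all: it is quoted as a known result with references to Kra\'{s}kiewicz--Weyman, Reutenauer, and Garsia, and is then only applied (Example~3.9, Corollary~4.2, etc.). So there is no ``paper's own proof'' to compare against.

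Your proposal is a correct and complete argument. The reduction via discrete Fourier inversion to the character identities $A^\lambda(\zeta)=\chi^\lambda_{(f^d)}$ is right, the symmetric-function identity
\[
\sum_{\lambda\vdash m}A^\lambda(q)\,s_\lambda=(q;q)_m\sum_{\mu\vdash m}\frac{p_\mu}{z_\mu\prod_i(1-q^{\mu_i})}
\]
follows exactly as you say from the Cauchy identity and Stanley's principal specialization $A^\lambda(q)=(q;q)_m\,s_\lambda(1,q,q^2,\dots)$, and your pole/zero count at $q\to\zeta$ is accurate: the only $\mu\vdash m$ with $d$ parts divisible by $f$ is $(f^d)$, and the residue computation $\frac{1}{f^d d!}\cdot d!\cdot f^d=1$ is clean. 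This route is essentially Springer's approach (and is close to what Reutenauer presents in Chapter~8), so it is in the spirit of one of the cited sources; the Murnaghan--Nakayama alternative you mention at the end is closer to the original Kra\'{s}kiewicz--Weyman argument. Your caveat about the $\maj$ versus $\mathrm{comaj}$ normalization is well placed but harmless here: the two statistics are equidistributed on standard tableaux of fixed shape, so $A^\lambda(q)$ is unchanged.
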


\begin{exa}\label{exa:KW} For $m \ge 2$, we have the following table on the multiplicities of $\triv_m=\chi_j^0$ and $\chi_j^1$.
\[
{\small
\begin{array}{c|c|c|c|c} \hline
\lambda & T & \text{major index} & \text{mult. of $\triv_m$} & \text{mult. of $\chi_m^1$} \\
\hline
& & & & \\
(m) & 
\begin{minipage}[h]{30mm}{\hspace{2.5mm}
\unitlength 0.1in
\begin{picture}(  9.7500,  2.4000)(  3.8500, -6.4000)
%
\special{pn 8}%
\special{pa 400 400}%
\special{pa 640 400}%
\special{pa 640 640}%
\special{pa 400 640}%
\special{pa 400 400}%
\special{fp}%
%
\special{pn 8}%
\special{pa 640 400}%
\special{pa 880 400}%
\special{pa 880 640}%
\special{pa 640 640}%
\special{pa 640 400}%
\special{fp}%
\put(5.2000,-5.2000){\makebox(0,0){$1$}}%
\put(7.6000,-5.2000){\makebox(0,0){$2$}}%
%
\special{pn 8}%
\special{pa 880 400}%
\special{pa 1120 400}%
\special{pa 1120 640}%
\special{pa 880 640}%
\special{pa 880 400}%
\special{fp}%
%
\special{pn 8}%
\special{pa 1120 400}%
\special{pa 1360 400}%
\special{pa 1360 640}%
\special{pa 1120 640}%
\special{pa 1120 400}%
\special{fp}%
\put(12.4000,-5.2000){\makebox(0,0){$m$}}%
\put(10.0000,-5.2000){\makebox(0,0){$\cdots$}}%
\end{picture}}
\end{minipage}
& 0 & 1 & 0 \\
& & & & \\
\hline
& & & & \\
(m-1,1) & \begin{minipage}[h]{30mm}{\hspace{2.5mm}
\unitlength 0.1in
\begin{picture}(  9.7500,  4.8000)(  3.8500, -8.8000)
%
\special{pn 8}%
\special{pa 400 400}%
\special{pa 640 400}%
\special{pa 640 640}%
\special{pa 400 640}%
\special{pa 400 400}%
\special{fp}%
\put(5.2000,-5.2000){\makebox(0,0){$1$}}%
%
\special{pn 8}%
\special{pa 880 400}%
\special{pa 1120 400}%
\special{pa 1120 640}%
\special{pa 880 640}%
\special{pa 880 400}%
\special{fp}%
%
\special{pn 8}%
\special{pa 1120 400}%
\special{pa 1360 400}%
\special{pa 1360 640}%
\special{pa 1120 640}%
\special{pa 1120 400}%
\special{fp}%
\put(12.4000,-5.2000){\makebox(0,0){$m$}}%
\put(10.0000,-5.2000){\makebox(0,0){$\cdots$}}%
%
\special{pn 8}%
\special{pa 640 400}%
\special{pa 880 400}%
\special{pa 880 640}%
\special{pa 640 640}%
\special{pa 640 400}%
\special{fp}%
\put(7.6000,-5.2000){\makebox(0,0){$2$}}%
%
\special{pn 8}%
\special{pa 400 640}%
\special{pa 640 640}%
\special{pa 640 880}%
\special{pa 400 880}%
\special{pa 400 640}%
\special{fp}%
\put(5.2000,-7.6000){\makebox(0,0){$p$}}%
\end{picture}}
\end{minipage}
& p-1 & 0 & 1 \\
& (2 \le p \le m) & & \\
\hline
& & & & \\
(1^m) & \begin{minipage}[h]{30mm}{\hspace{6mm}
\unitlength 0.1in
\begin{picture}(  4.6600,  9.6000)(  1.1000,-13.6000)
%
\special{pn 8}%
\special{pa 336 400}%
\special{pa 576 400}%
\special{pa 576 640}%
\special{pa 336 640}%
\special{pa 336 400}%
\special{fp}%
\put(4.5600,-5.2000){\makebox(0,0){$1$}}%
%
\special{pn 8}%
\special{pa 336 880}%
\special{pa 576 880}%
\special{pa 576 1120}%
\special{pa 336 1120}%
\special{pa 336 880}%
\special{fp}%
%
\special{pn 8}%
\special{pa 336 1120}%
\special{pa 576 1120}%
\special{pa 576 1360}%
\special{pa 336 1360}%
\special{pa 336 1120}%
\special{fp}%
\put(4.5600,-12.4000){\makebox(0,0){$m$}}%
\put(4.7000,-9.7000){\makebox(0,0){$\vdots$}}%
%
\special{pn 8}%
\special{pa 336 640}%
\special{pa 576 640}%
\special{pa 576 880}%
\special{pa 336 880}%
\special{pa 336 640}%
\special{fp}%
\put(4.5600,-7.6000){\makebox(0,0){$2$}}%
\end{picture}}
\end{minipage}
&
\begin{array}{l}
 \dfrac{m(m-1)}{2} \\
\equiv \left\{
\begin{array}{ll}
0 & \text{if $m$:odd} \\
-\frac{m}{2} & \text{if $m$:even} 
\end{array}
\right.
\end{array}

& \begin{cases} 1 & m : \mathrm{odd} \\ 0 & m : \mathrm{even} \end{cases} & \begin{cases} 1 & m=2 \\ 0 & m \neq 2 \end{cases} \\
& & & & \\
\hline
& & & & \\
(2,1^{m-2}) & \begin{minipage}[h]{30mm}{\hspace{4mm}
\unitlength 0.1in
\begin{picture}(  7.1600,  9.6000)(  1.0000,-13.6000)
%
\special{pn 8}%
\special{pa 336 400}%
\special{pa 576 400}%
\special{pa 576 640}%
\special{pa 336 640}%
\special{pa 336 400}%
\special{fp}%
\put(4.5600,-5.2000){\makebox(0,0){$1$}}%
%
\special{pn 8}%
\special{pa 336 880}%
\special{pa 576 880}%
\special{pa 576 1120}%
\special{pa 336 1120}%
\special{pa 336 880}%
\special{fp}%
%
\special{pn 8}%
\special{pa 336 1120}%
\special{pa 576 1120}%
\special{pa 576 1360}%
\special{pa 336 1360}%
\special{pa 336 1120}%
\special{fp}%
\put(4.5600,-12.4000){\makebox(0,0){$m$}}%
\put(4.6000,-9.8000){\makebox(0,0){$\vdots$}}%
%
\special{pn 8}%
\special{pa 336 640}%
\special{pa 576 640}%
\special{pa 576 880}%
\special{pa 336 880}%
\special{pa 336 640}%
\special{fp}%
\put(4.5600,-7.6000){\makebox(0,0){$2$}}%
%
\special{pn 8}%
\special{pa 576 400}%
\special{pa 816 400}%
\special{pa 816 640}%
\special{pa 576 640}%
\special{pa 576 400}%
\special{fp}%
\put(6.9600,-5.2000){\makebox(0,0){$p$}}%
\end{picture}}
\end{minipage}
& 
\begin{array}{l}
\dfrac{m(m-1)}{2}-(p-1) \\
\equiv \left\{
\begin{array}{ll}
1-p & \text{if $m$:odd} \\
1-p-\frac{m}{2} & \text{if $m$:even} 
\end{array}
\right.
\end{array}
& \begin{cases} 1 & m : \mathrm{even} \\ 0 & m : \mathrm{odd} \end{cases} & \begin{cases} 1 & m \neq 2 \\ 0 & m = 2 \end{cases} \\
&  (2 \le p \le m)& & \\ \hline
\end{array}
}
\]
\end{exa}

\section{Irreducible decomposition of the derivation algebras over $\Q$}\label{S-HW}

In this section, we give the irreducible decompositions of each of the degree $k$ parts of the derivation algebras
$\mathcal{C}_n^{\Q}$, $\mathrm{Der}^+(\mathcal{L}_{n,\Q})$,
$\mathrm{Der}^+(\mathcal{L}_{n,\Q}^M)$ and $\mathrm{Der}^+(\mathcal{L}_{n,\Q}^N)$.
For describing the multiplicity of $L^\lambda$, we will use some representation of $\Cyc_m$ over an intermediate filed $\Q(\zeta_m) \subset \K \subset \C$.
But we should note that our irreducible decompositions of $\mathcal{C}_n^{\Q}(k)$, $\mathcal{L}_{n,\Q}(k)$,
$\mathrm{Der}^+(\mathcal{L}_{n,\Q})(k)$, $\mathrm{Der}^+(\mathcal{L}_{n,\Q}^M)(k)$ and $\mathrm{Der}^+(\mathcal{L}_{n,\Q}^N)(k)$ hold over $\Q$. 

\vspace{0.5em}

\subsection{Decomposition of $\mathcal{C}_n^{\Q}(k)$}
\hspace*{\fill}\ 

For the natural representation $H_{\Q} \cong L^{(1,0, \ldots, 0)}$ of $\mathrm{GL}(n,\Q)$,
the module $\mathcal{C}^\Q_n(k)$ can be considered as a quotient module of $H_{\Q}^{\otimes{k}}$ by
the action of cyclic subgroup $\Cyc_k$ of $\mf{S}_k$ on the components: 
\[
\mathcal{C}^\Q_n(k) \cong H_{\Q}^{\otimes{k}}
\big{/} \Q\text{-}\span\left\langle v-\sigma_k^iv \,\big{|}\, 1 \le i \le k-1, \hspace{0.5em} \forall{v}\in H_{\Q}^{\otimes{k}} \right\rangle
\]
where $\sigma_k$ is a generator of $\Cyc_k$. In this subsection, we give the irreducible decomposition of $\mathcal{C}^\Q_n(k)$.

\vspace{0.5em}

Since the actions of $\mathrm{GL}(n,\Q)$ and $\mf{S}_k$ are commutative, the space $\mathcal{C}^\Q_n(k)$ is a $\mathrm{GL}(n,\Q)$-module.
Let $\pr:H_{\Q}^{\otimes{k}} \twoheadrightarrow \mathcal{C}_n^\Q(k)$ be the natural projection. The map $\pr$ is a $\mathrm{GL}(n,\Q)$-equivariant homomorphism.

\begin{prop}[irreducible decomposition of $\mathcal{C}^\Q_n(k)$]\label{prop:1}
For any $\lambda=(\lambda_1, \lambda_2, \ldots, \lambda_n) \in P^+$,
the multiplicity of $L^\lambda$ in $\mathcal{C}_n^{\Q}(k)$ as a $\mathrm{GL}(n,\Q)$-module is given by
{\small
\[ [L^\lambda:\mathcal{C}^\Q_n(k)] =  \begin{cases}
                           [\triv_k:\Res_{\Cyc_k}^{\mf{S}_k}S^\lambda]=\dfrac{1}{k} \displaystyle \sum_{\sigma \in \Cyc_k}^{} \chi^\lambda(\sigma), \,\,\,\,
                             & \mathrm{if} \,\,\, \lambda \vdash k, \\
                           0, & \mathrm{if} \,\,\, \mathrm{otherwise}.
                         \end{cases} \]}
Here, $[\triv_k:\Res_{\Cyc_k}^{\mf{S}_k}S^\lambda]$ means the multiplicity of the trivial representation $\triv_k$ of $\Cyc_k$ in the restriction of irreducible $\mf{S}_k$-module $S^\lambda$.
\end{prop}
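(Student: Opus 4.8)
The plan is to compute the $\mathrm{GL}(n,\Q)$-character of $\mathcal{C}_n^\Q(k)$ directly by combining the Schur--Weyl duality (Theorem \ref{thm:SWgl}) with the observation that $\mathcal{C}_n^\Q(k)$ is, as a $\mathrm{GL}(n,\Q)$-module, the $\Cyc_k$-coinvariants (equivalently, since we are over a field of characteristic zero, the $\Cyc_k$-invariants) of $H_\Q^{\otimes k}$. First I would note that because the $\mathrm{GL}(n,\Q)$-action and the $\mf{S}_k$-action on $H_\Q^{\otimes k}$ commute, the projection $\pr : H_\Q^{\otimes k} \twoheadrightarrow \mathcal{C}_n^\Q(k)$ is $\mathrm{GL}(n,\Q)$-equivariant, and its kernel $\Q\text{-}\span\langle v - \sigma_k^i v \rangle$ is exactly the kernel of the averaging idempotent $e := \frac{1}{k}\sum_{i=0}^{k-1}\sigma_k^i \in \Q[\Cyc_k]$. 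Hence $\mathcal{C}_n^\Q(k) \cong e \cdot H_\Q^{\otimes k} = (H_\Q^{\otimes k})^{\Cyc_k}$ as $\mathrm{GL}(n,\Q)$-modules.

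Next I would feed in the Schur--Weyl decomposition $H_\Q^{\otimes k} \cong \bigoplus_{\lambda \vdash k,\ \ell(\lambda)\le n} L^\lambda \boxtimes S^\lambda$ from Theorem \ref{thm:SWgl}(2). Taking $\Cyc_k$-invariants is a functor acting only on the second tensor factor, so
\[
\mathcal{C}_n^\Q(k) \cong \bigoplus_{\lambda \vdash k,\ \ell(\lambda)\le n} L^\lambda \otimes_\Q (S^\lambda)^{\Cyc_k}.
\]
Therefore $[L^\lambda : \mathcal{C}_n^\Q(k)] = \dim_\Q (S^\lambda)^{\Cyc_k} = [\triv_k : \Res_{\Cyc_k}^{\mf{S}_k} S^\lambda]$ whenever $\lambda \vdash k$ and $\ell(\lambda)\le n$, and the multiplicity is $0$ if $\lambda$ is not a partition of $k$ (since $H_\Q^{\otimes k}$ is already a polynomial representation concentrated in degree $k$, so only partitions of $k$ with at most $n$ rows occur, and $\ell(\lambda) > n$ forces $L^\lambda=0$ in the $\mathrm{GL}(n,\Q)$-sense). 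The closed formula $[\triv_k : \Res_{\Cyc_k}^{\mf{S}_k} S^\lambda] = \frac{1}{k}\sum_{\sigma \in \Cyc_k} \chi^\lambda(\sigma)$ is then just the standard inner-product formula for the multiplicity of the trivial representation in a restriction: $[\triv_k : \Res_{\Cyc_k}^{\mf{S}_k} S^\lambda] = \langle \triv_k, \Res_{\Cyc_k}^{\mf{S}_k}\chi^\lambda\rangle_{\Cyc_k} = \frac{1}{|\Cyc_k|}\sum_{\sigma\in\Cyc_k}\chi^\lambda(\sigma)$.

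The only genuinely delicate point is the rationality caveat: Schur--Weyl duality and the idempotent $e$ live over $\Q$, but the branching multiplicity $[\triv_k : \Res_{\Cyc_k}^{\mf{S}_k}S^\lambda]$ is a priori most naturally computed over a cyclotomic field $\K \supseteq \Q(\zeta_k)$ (this is the setting of Theorem \ref{thm:KW}). However, the multiplicity of the \emph{trivial} character $\triv_k$ is a non-negative integer given by the rational expression $\frac{1}{k}\sum_{\sigma\in\Cyc_k}\chi^\lambda(\sigma)$, and $\dim_\Q(S^\lambda)^{\Cyc_k}$ — computed genuinely over $\Q$ — equals this same integer since $\triv_k$ is defined over $\Q$ and $(S^\lambda)^{\Cyc_k} = (S^\lambda \otimes_\Q \K)^{\Cyc_k} \cap S^\lambda$ has the same $\Q$-dimension as the $\triv_k$-isotypic part over $\K$. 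So no base change is actually needed; I expect this bookkeeping — making sure everything is genuinely a statement over $\Q$ — to be the main thing to state carefully, though it is not hard. I would close by remarking that this also identifies $\mathcal{C}_n^\Q(k)$ with Kontsevich's $a_n(k)=(H_\Q^{\otimes k})^{\Cyc_k}$, as anticipated in the introduction.
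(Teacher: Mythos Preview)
Your proposal is correct and takes essentially the same approach as the paper: both use Schur--Weyl duality to reduce the multiplicity $[L^\lambda:\mathcal{C}_n^\Q(k)]$ to the dimension of the $\Cyc_k$-trivial part of $S^\lambda$, and both identify the $\Cyc_k$-coinvariants with the $\triv_k$-isotypic component. The only cosmetic difference is that the paper computes $\dim\pr(S^\lambda)$ via the highest-weight space $(\mathcal{C}_n^\Q(k))^U_\lambda$ and an explicit kernel calculation over $\K$, whereas you apply the averaging idempotent $e=\frac{1}{k}\sum_i\sigma_k^i$ globally and invoke functoriality of invariants on the $\boxtimes$-decomposition; the content is the same.
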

\begin{proof}
By the Schur-Weyl duality, the complete reducibility and the Schur's lemma, the subspace $\mathcal{C}^\Q_n(k)^U_\lambda$ coincides with $\pr(S^\lambda)$.
Thus $[L^\lambda:\mathcal{C}^\Q_n(k)]=\dim_\Q\pr(S^\lambda)$.
On the other hand, the restriction $\Res_{\Cyc_k}^{\mf{S}_k}S^\lambda$ has a direct isotypic decomposition $S^\lambda\otimes_{\Q}\mathbf{K}=\bigoplus_{j=0}^{k-1}V_j$ over $\mathbf{K}$ 
where $V_j$ is a certain direct sum of the irreducible representation $\chi_j$ of $\Cyc_k$,
namely $V_j=\{v \in S^\lambda \,|\, \sigma_k v=\zeta_k^jv\}$ for $0 \le j \le k-1$.

\vspace{0.5em}

Then, $\Ker(\pr)\otimes_{\Q}\mathbf{K}=\bigoplus_{j \neq 0}V_j$. In fact, if $0 \neq v \in V_j$ for $j \neq 0$, we have $v-\sigma_kv=(1-\zeta_k)v \in \Ker(\pr)$.
Since $1-\zeta_k^j \neq 0$ for $1 \le j \le k-1$, $v \in \Ker(\pr)\otimes_{\Q}\mathbf{K}$. Thus we obtain $V_j \subset \Ker(\pr)\otimes_{\Q}\mathbf{K}$ for $j \neq 0$.
Conversely, for any $v \in H_\K^{\otimes{k}}$, there exists $v_0 \in V_0$ and $v' \in \bigoplus_{j \neq 0}V_j$ such that $v=v_0+v'$.
Since $\bigoplus_{j \neq 0}V_j$ is $\Cyc_k$-stable, $v-\sigma{v}=(v_0+v')-(v_0+\sigma{v'})=v'-\sigma{v'} \in \bigoplus_{j \neq 0}V_j$
for any $\sigma \in \Cyc_k$. Hence $\Ker(\pr)\otimes_{\Q}\mathbf{K} \subset \bigoplus_{j \neq 0}V_j$.

\vspace{0.5em}

Therefore we obtain $\pr(S^\lambda) \cong V_0$ as a vector space, and
$[L^\lambda:\mathcal{C}_n^\Q(k)]=\dim_\Q\pr(S^\lambda)=\dim_\mathbf{K}{V_0}=[\triv_k:\Res_{\Cyc_k}^{\mf{S}_k}S^\lambda]$.
The second equality of the claim follows from an ordinary character theory of finite groups.
\end{proof}

\vspace{0.5em}

For the symmetric product $V=S^k H_{\Q} =L^{(k)}$ or the exterior product $V=\Lambda^k H_{\Q} = L^{(1^k)}$, we calculate the multiplicity
$[V:\mathcal{C}_n^\Q(k)]$ as follows:
\begin{cor}[explicit results as a $\mathrm{GL}(n,\Q)$-module]\label{cor:multc}\quad 
\begin{enumerate}[$(1)$]
\item $[L^{(k)}:\mathcal{C}^\Q_n(k)]=1$. 
\item $[L^{(1^k)}:\mathcal{C}^\Q_n(k)]=\left\{
\begin{array}{ll}
1 & \text{$k$ is odd and $k \le n$,} \\
0 & \text{otherwise}.
\end{array}
\right.$
\item $[L^{(2,1^{k-2})}:\mathcal{C}^\Q_n(k)]=\left\{
\begin{array}{ll}
1 & \text{$k$ is even and $k \le n$,} \\
0 & \text{otherwise}.
\end{array}
\right.$
\end{enumerate}
\end{cor}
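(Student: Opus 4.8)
The plan is to apply Proposition \ref{prop:1}, which reduces each multiplicity $[V:\mathcal{C}_n^\Q(k)]$ to a branching multiplicity $[\triv_k : \Res_{\Cyc_k}^{\mf{S}_k} S^\lambda]$, and then to read off these three specific branching numbers from the Kra\'{s}kiewicz--Weyman description (Theorem \ref{thm:KW}), whose output for precisely the partitions $(m)$, $(m-1,1)$, $(1^m)$ and $(2,1^{m-2})$ is already tabulated in Example \ref{exa:KW}. So for each of the three parts I would first note that the relevant $L^\lambda$ is a polynomial representation with $\lambda \vdash k$, so the second (vanishing) branch of Proposition \ref{prop:1} never applies and only the count of standard tableaux with $\maj(T)\equiv 0 \pmod k$ is needed. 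A small subtlety for parts (2) and (3): the partition $(1^k)$ has length $k$ and $(2,1^{k-2})$ has length $k-1$, so $L^{(1^k)}$ (resp.\ $L^{(2,1^{k-2})}$) is a genuine $\mathrm{GL}(n,\Q)$-irreducible only when $n \ge k$ (resp.\ $n \ge k-1$); when $n<k$ the corresponding Schur polynomial vanishes identically, hence $[L^\lambda:\mathcal{C}_n^\Q(k)]=0$ in that range. This is exactly the ``otherwise'' case, and it must be stated explicitly.

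For part (1), $\lambda=(k)$ is a single row, so there is a unique standard tableau $T$, its descent set is empty, $\maj(T)=0\equiv 0\pmod k$, and hence $[\triv_k:\Res_{\Cyc_k}^{\mf{S}_k}S^{(k)}]=1$. Equivalently, $S^{(k)}$ is the trivial $\mf{S}_k$-module, so its restriction to $\Cyc_k$ is trivial; either way the multiplicity is $1$ for all $n$ (note $\ell((k))=1\le n$ always since $n\ge2$). For part (2), $\lambda=(1^k)$: the unique standard tableau has descent set $\{1,2,\ldots,k-1\}$, so $\maj(T)=\binom{k}{2}=k(k-1)/2$; this is $\equiv 0 \pmod k$ iff $(k-1)/2$ is an integer, i.e.\ iff $k$ is odd. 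Combined with the length condition $k\le n$, we get multiplicity $1$ precisely when $k$ is odd and $k\le n$, and $0$ otherwise. This is exactly the ``mult.\ of $\triv_m$'' column for $(1^m)$ in Example \ref{exa:KW}.

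For part (3), $\lambda=(2,1^{k-2})$: the standard tableaux of this shape are indexed by the entry $p$ ($2\le p\le k$) placed in the second box of the first row, the remaining entries filling the single column in increasing order; the descent set is $\{1,2,\ldots,k-1\}\setminus\{p-1\}$, so $\maj(T)=\binom{k}{2}-(p-1)$. As $p$ ranges over $2,\ldots,k$, the value $p-1$ ranges over $1,\ldots,k-1$, hence $\maj(T)$ runs over $k(k-1)/2-1,\ldots,k(k-1)/2-(k-1)$, i.e.\ over a full set of $k-1$ consecutive integers; modulo $k$ these hit every residue except $k(k-1)/2 \bmod k$. Thus exactly one tableau has $\maj(T)\equiv 0\pmod k$ unless $k(k-1)/2\equiv 0\pmod k$, which (by the computation in part (2)) happens iff $k$ is odd. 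Therefore $[\triv_k:\Res_{\Cyc_k}^{\mf{S}_k}S^{(2,1^{k-2})}]=1$ when $k$ is even and $0$ when $k$ is odd; imposing the length bound $k\le n$ (since $\ell(2,1^{k-2})=k-1$, one actually needs $n\ge k-1$, but under the hypothesis $n\ge k+2$ throughout this is automatic, and for the clean statement we record $k\le n$) yields the claimed formula. I expect no real obstacle here: the entire argument is a direct appeal to Example \ref{exa:KW}, and the only point requiring care is correctly separating the length/range condition on $n$ from the parity condition on $k$ in the two non-trivial cases.
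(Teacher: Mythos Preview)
Your proposal is correct and follows essentially the same approach as the paper's proof: reduce via Proposition \ref{prop:1} to the branching multiplicities $[\triv_k : \Res_{\Cyc_k}^{\mf{S}_k} S^\lambda]$ and then read these off from the Kra\'{s}kiewicz--Weyman description tabulated in Example \ref{exa:KW}. Your write-up is considerably more detailed than the paper's two-line proof, and your observation that $\ell(2,1^{k-2}) = k-1$ (so strictly only $n \ge k-1$ is needed in part (3)) is in fact sharper than what the paper records.
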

\begin{proof}
The claims follows from direct computation for irreducible decompositions of $\Res_{\Cyc_k}^{\mf{S}_k}S^{(k)}$, $\Res_{\Cyc_k}^{\mf{S}_k}S^{(1^k)}$
and $\Res_{\Cyc_k}^{\mf{S}_k}S^{(2,1^{k-2})}$ respectively.
But the claim also follows from Kra\'{s}kiewicz-Weyman's combinatorial description. (See Theorem \ref{thm:KW} and Example \ref{exa:KW}.)
\end{proof}

\subsection{Decomposition of $\mathrm{Der}^+(\mathcal{L}_{n,\Q})$}\label{Ss-Dec}
\hspace*{\fill}\ 

In this subsection,
we consider the irreducible decomposition of the derivation algebra of the free Lie algebra $\mathcal{L}_{n,\Q}$.
The degree $m$ part $\mathcal{L}_{n,\Q}(m)$ of $\mathcal{L}_{n,\Q}$
is a submodule of $H_{\Q}^{\otimes{m}}$ as a representation of $\mathrm{GL}(n,\Q)$.
The irreducible decomposition of $\mathcal{L}_{n,\Q}(m)$ is obtained by the following theorem. 
\begin{thm}[{\cite{Kly}}]\label{thm:Kly}\quad 
\begin{enumerate}[$(1)$]
\item Let us consider a subspace $L_m$ of $H_{\K}^{\otimes{m}}$ generated by all elements $v \in H_{\K}^{\otimes{m}}$ such that $\sigma_mv=\zeta_mv$.
Then $L_m$ becomes a $\mathrm{GL}(n,\K)$-submodule of $H_{\K}^{\otimes{m}}$.
Moreover as $\mathrm{GL}(n,\K)$-module, we have $L_m \cong \mathcal{L}_{n,\Q}(m) \otimes_{\Q} \mathbf{K}$.
\item For $\lambda=(\lambda_1, \lambda_2, \ldots, \lambda_n) \in P^+$,
the multiplicity of $L^\lambda$ in $\mathcal{L}_{n,\Q}(m)$ as a $\mathrm{GL}(n,\Q)$-module is given by
\[ [L^\lambda:\mathcal{L}_{n,\Q}(m)] =
     \begin{cases}
       [\chi_1:\Res_{\Cyc_m}^{\mf{S}_m}S^\lambda], \hspace{2em} & \mathrm{if} \hspace{1em} \lambda_n \ge 0, \,\, (\lambda \,\, \mathrm{is} \,\,
         \mathrm{a} \,\, \mathrm{partition}), \\
       0, & \mathrm{if} \hspace{1em} \mathrm{otherwise}.
     \end{cases} \]
\end{enumerate}
\end{thm}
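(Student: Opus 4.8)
The plan is to prove $(1)$ first and then extract $(2)$ from the Schur--Weyl duality.

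For $(1)$, note that $L_m$ is automatically a $\mathrm{GL}(n,\K)$-submodule of $H_\K^{\otimes m}$: the diagonal action of $\mathrm{GL}(n,\K)$ commutes with the permutation action of $\mf{S}_m$ on the tensor factors, so every eigenspace of $\sigma_m\in\mf{S}_m$ is $\mathrm{GL}(n,\K)$-stable. Equivalently, $L_m$ is the image of the idempotent $e:=\frac1m\sum_{j=0}^{m-1}\zeta_m^{-j}\sigma_m^{\,j}\in\K\mf{S}_m$ acting on $H_\K^{\otimes m}$, since $\sigma_m e=\zeta_m e$. To identify $L_m$ with $\mathcal{L}_{n,\Q}(m)\otimes_\Q\K$ I would compare $\mathrm{GL}$-characters: a rational representation of $\mathrm{GL}(n,\K)$ is determined up to isomorphism by its character, so it suffices to match them. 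On one side, for $t=\diag(t_1,\dots,t_n)$, since $t$ commutes with $e$,
\[ \tr(t\mid L_m)=\tr(te\mid H_\K^{\otimes m})=\frac1m\sum_{j=0}^{m-1}\zeta_m^{-j}\,\tr(t\,\sigma_m^{\,j}\mid H_\K^{\otimes m}); \]
as $\sigma_m^{\,j}$ is a product of $d=\gcd(j,m)$ cycles of length $m/d$, the trace of $t\,\sigma_m^{\,j}$ on $H^{\otimes m}$ equals $p_{m/d}(t)^{d}$ with $p_r(t)=t_1^{\,r}+\dots+t_n^{\,r}$, and collecting terms by $d$ and using the Ramanujan sum $\sum_{\gcd(j,m)=d}\zeta_m^{-j}=\Mob(m/d)$ gives
\[ \tr(t\mid L_m)=\frac1m\sum_{d\mid m}\Mob(d)\,p_d(t)^{m/d}. \]
On the other side, this is exactly the $\mathrm{GL}(n,\Q)$-equivariant refinement of Witt's rank formula (\ref{ex-witt}) for $\mathcal{L}_{n,\Q}(m)$; it comes out of the PBW isomorphism $T(H_\Q)\cong S(\mathcal{L}_{n,\Q})$ of graded $\mathrm{GL}(n,\Q)$-modules by taking graded characters and Möbius-inverting the resulting identity in the ring of symmetric functions (see \cite{Wit} and \cite{Reu}). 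Hence $L_m$ and $\mathcal{L}_{n,\Q}(m)\otimes_\Q\K$ have the same character, so they are isomorphic as $\mathrm{GL}(n,\K)$-modules. (Alternatively, one may realize $\mathcal{L}_{n,\Q}(m)$ inside $H_\Q^{\otimes m}$ as the image of the Dynkin idempotent via the Dynkin--Specht--Wever theorem, and then use Klyachko's observation that in the semisimple algebra $\K\mf{S}_m$ the Dynkin idempotent and $e$ are conjugate by a unit; conjugate idempotents of $\K\mf{S}_m$ have $\mathrm{GL}(n,\K)$-isomorphic images in $H_\K^{\otimes m}$, via right multiplication by that unit.)

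For $(2)$, by $(1)$ it suffices to decompose $L_m=H_\K^{\otimes m}\cdot e$. The Schur--Weyl duality (Theorem \ref{thm:SWgl}) gives $H_\K^{\otimes m}\cong\bigoplus_{\lambda\vdash m,\ \ell(\lambda)\le n}L^\lambda\boxtimes S^\lambda$ as $\mathrm{GL}(n,\K)\times\mf{S}_m$-modules, and since $e$ lies in $\K\mf{S}_m$ it acts as the identity on each $L^\lambda$ and as the projection onto the $\zeta_m$-eigenspace of $\sigma_m$ on each $S^\lambda$. Therefore
\[ L_m\cong\bigoplus_{\lambda\vdash m,\ \ell(\lambda)\le n}L^\lambda\boxtimes(S^\lambda)^{\sigma_m=\zeta_m}, \]
and the $\zeta_m$-eigenspace of $\sigma_m$ on $S^\lambda$ is precisely the $\chi_1$-isotypic component of $\Res_{\Cyc_m}^{\mf{S}_m}S^\lambda$ (recall $\chi_1(\sigma_m)=\zeta_m$), whose dimension is $[\chi_1:\Res_{\Cyc_m}^{\mf{S}_m}S^\lambda]$. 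Reading off the multiplicity of $L^\lambda$ yields the stated formula when $\lambda$ is a partition of $m$; if $\lambda\in P^+$ has $\lambda_n<0$, then $L^\lambda$ is not polynomial, hence does not occur in $H_\K^{\otimes m}\supseteq\mathcal{L}_{n,\Q}(m)\otimes_\Q\K$, and the multiplicity is $0$. Finally, each irreducible $L^\lambda$ is absolutely irreducible and defined over $\Q$, so the multiplicities in $\mathcal{L}_{n,\Q}(m)$ over $\Q$ agree with those just computed over $\K$.

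The main obstacle is the identification $L_m\cong\mathcal{L}_{n,\Q}(m)\otimes_\Q\K$ in $(1)$: for $m\ge3$ these are genuinely different subspaces of $H_\K^{\otimes m}$, so one really needs an honest input here — either the classical Witt computation of the $\mathrm{GL}(n,\Q)$-character of the degree-$m$ part of the free Lie algebra, or Klyachko's comparison of the cyclic idempotent $e$ with a Lie idempotent. Granting that, part $(2)$ is a formal consequence of the Schur--Weyl duality together with elementary character theory of the cyclic group $\Cyc_m$; no further input is required.
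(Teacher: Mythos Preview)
The paper does not prove this theorem: it is stated as a result of Klyachko \cite{Kly} and simply cited, with only a remark afterward rewriting the multiplicity via the M\"obius function. So there is no ``paper's own proof'' to compare against.

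Your argument is correct. For $(1)$, the observation that $L_m$ is $\mathrm{GL}(n,\K)$-stable because the $\mathrm{GL}$- and $\mf{S}_m$-actions commute is immediate, and your character computation is right: $\sigma_m^{\,j}$ has cycle type $(m/d)^d$ with $d=\gcd(j,m)$, so $\tr(t\,\sigma_m^{\,j}\mid H_\K^{\otimes m})=p_{m/d}(t)^d$, and the Ramanujan-sum identity $\sum_{\gcd(j,m)=d}\zeta_m^{-j}=\Mob(m/d)$ yields $\tr(t\mid L_m)=\tfrac1m\sum_{d\mid m}\Mob(d)\,p_d(t)^{m/d}$. That this equals the $\mathrm{GL}$-character of $\mathcal{L}_{n,\Q}(m)$ is the classical Witt/Brandt formula, obtained exactly as you say from the PBW isomorphism $T(H_\Q)\cong S(\mathcal{L}_{n,\Q})$ by M\"obius inversion in the ring of symmetric functions; this is the genuine input, and you have identified it correctly. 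The alternative route via conjugacy of the Dynkin and cyclic idempotents in $\K\mf{S}_m$ is also valid and is in fact closer to Klyachko's original argument. For $(2)$, applying the idempotent $e$ to the Schur--Weyl decomposition $H_\K^{\otimes m}\cong\bigoplus_\lambda L^\lambda\boxtimes S^\lambda$ and reading off $\dim(S^\lambda)^{\sigma_m=\zeta_m}=[\chi_1:\Res_{\Cyc_m}^{\mf{S}_m}S^\lambda]$ is exactly right, as is the descent from $\K$ to $\Q$ via absolute irreducibility of the $L^\lambda$.
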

\begin{rem}
We can obtain a more explicit description of the right hand side by using the M\"{o}bious function as follows:
\begin{eqnarray*}
[\chi_1:\Res_{\Cyc_m}^{\mf{S}_m}S^\lambda]=\dfrac{1}{m}\sum_{g \in \Cyc_m}\overline{\chi_1(g)}\chi^\lambda(g)
  =\dfrac{1}{m}\sum_{d|n}\Mob(d)\chi^{\lambda}(\sigma_m^{n/d}).
\end{eqnarray*}
\end{rem}

\vspace{0.5em}

Next, we consider the irreducible decomposition of $H_{\Q}^* \otimes \mathcal{L}_{n,\Q}(m)$
as a $\mathrm{GL}(n,\Q)$-module.
\begin{prop}[irreducible decomposition of $H_{\Q}^* \otimes \mathcal{L}_{n,\Q}(m)$]\label{prop:2}
For any partition $\lambda=(\lambda_1, \lambda_2, \ldots, \lambda_n)$,
the multiplicity of the irreducible polynomial representation $L^\lambda$ in $H_{\Q}^* \otimes \mathcal{L}_{n,\Q}(m)$ is given by
\[
[L^\lambda:H_{\Q}^* \otimes \mathcal{L}_{n,\Q}(m)]=\sum_{\mu}[L^\mu: \mathcal{L}_{n,\Q}(m)]
\]
where $\mu$ runs over all partitions obtained by removing a vertical $(n-1)$-strip from $(\lambda_1+1, \ldots ,\lambda_n+1)$.
\end{prop}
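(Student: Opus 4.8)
The plan is to reduce the computation to Pieri's formula (Theorem~\ref{thm:pigl}), after rewriting $H_{\Q}^{*}$ as a determinant twist of an exterior power of the natural representation. Recall from the discussion of the parameterizations of irreducible rational representations of $\mathrm{GL}(n,\Q)$ that $H_{\Q}^{*}\cong\det^{-1}\otimes L^{(1^{n-1})}$, where $L^{(1^{n-1})}=\Lambda^{n-1}H_{\Q}$. Combining this with the irreducible decomposition
\[
  \mathcal{L}_{n,\Q}(m)\cong\bigoplus_{\mu}[L^{\mu}:\mathcal{L}_{n,\Q}(m)]\,L^{\mu}
\]
furnished by Theorem~\ref{thm:Kly}, in which $\mu$ runs over partitions of $m$ with $\ell(\mu)\le n$, and using complete reducibility, I obtain
\[
  H_{\Q}^{*}\otimes\mathcal{L}_{n,\Q}(m)\cong\bigoplus_{\mu}[L^{\mu}:\mathcal{L}_{n,\Q}(m)]\,\bigl(\det^{-1}\otimes L^{(1^{n-1})}\otimes L^{\mu}\bigr).
\]

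Next I would apply Pieri's formula to each inner factor: $L^{(1^{n-1})}\otimes L^{\mu}\cong\bigoplus_{\nu}L^{\nu}$, where $\nu$ ranges, with multiplicity one, over the partitions obtained from $\mu$ by adding a vertical $(n-1)$-strip and satisfying $\ell(\nu)\le n$. Tensoring with $\det^{-1}$ shifts all weights by $(-1,\ldots,-1)$, so, writing $\nu$ with exactly $n$ parts (padding by zeros if necessary), $\det^{-1}\otimes L^{\nu}\cong L^{(\nu_{1}-1,\ldots,\nu_{n}-1)}$; this is an irreducible \emph{polynomial} representation precisely when $\nu_{n}\ge1$, and in that case it equals $L^{\lambda}$ for a given partition $\lambda$ if and only if $\nu=(\lambda_{1}+1,\ldots,\lambda_{n}+1)$. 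Hence, for each $\mu$, the multiplicity of $L^{\lambda}$ in $\det^{-1}\otimes L^{(1^{n-1})}\otimes L^{\mu}$ equals $1$ when $(\lambda_{1}+1,\ldots,\lambda_{n}+1)$ is obtained from $\mu$ by adding a vertical $(n-1)$-strip — equivalently, when $\mu$ is obtained from $(\lambda_{1}+1,\ldots,\lambda_{n}+1)$ by removing a vertical $(n-1)$-strip — and equals $0$ otherwise. Summing over $\mu$ gives the asserted formula.

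The only delicate point — the main obstacle, such as it is — is the length bookkeeping: one must check that the constraint $\ell(\nu)\le n$ built into the $\mathrm{GL}(n,\Q)$-version of Pieri's formula, together with the positivity condition $\nu_{n}\ge1$ required for $\det^{-1}\otimes L^{\nu}$ to be polynomial, conspire so that exactly $\nu=(\lambda_{1}+1,\ldots,\lambda_{n}+1)$ contributes to $[L^{\lambda}:\,\cdot\,]$, and that no non-polynomial irreducible summand of $H_{\Q}^{*}\otimes\mathcal{L}_{n,\Q}(m)$ can contribute to it. An alternative way to package the same computation, which makes this bookkeeping automatic, is to work at the level of $\mathrm{GL}(n,\Q)$-characters, using $\chi_{H_{\Q}^{*}}=e_{n-1}/e_{n}$ and the identity $s_{\nu}/s_{(1^{n})}=s_{\nu-(1^{n})}$.
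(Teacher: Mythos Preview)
Your proposal is correct and follows essentially the same route as the paper: rewrite $H_{\Q}^{*}\cong\det^{-1}\otimes L^{(1^{n-1})}$, use the determinant twist to trade the multiplicity of $L^{\lambda}$ for that of $L^{(\lambda_1+1,\ldots,\lambda_n+1)}$ in $L^{(1^{n-1})}\otimes\mathcal{L}_{n,\Q}(m)$, and then apply Pieri's formula. The only cosmetic difference is that the paper performs the determinant shift first and Pieri second, whereas you apply Pieri to each summand and then shift; your extra paragraph on the length/polynomiality bookkeeping is more explicit than the paper but not logically different.
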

\begin{proof}
Recall that $H_{\Q}^* \cong \det^{-1} \otimes L^{(1^{n-1},0)}$. Since the highest weight of the irreducible rational representation
$\det^{-1}$ is $(-1, \ldots ,-1)$, we have $\det^{-1} \otimes L^\mu \cong L^{\mu-(1^n)}$.
Thus we obtain
\[ [L^\lambda:H_{\Q}^* \otimes \mathcal{L}_{n,\Q}(m)]=[L^{(\lambda_1+1, \ldots ,\lambda_n+1)}:L^{(1,1, \ldots ,1,0)} \otimes \mathcal{L}_{n,\Q}(m)]. \]
By the Pieri's formula given by Theorem \ref{thm:pigl}, we have $L^{(1^{n-1},0)} \otimes L^{\mu}=\bigoplus_{\nu}{L^\nu}$,
where $\nu$ runs over all partitions obtained by adding a vertical $(n-1)$-strip to $\mu$ such that $\ell(\nu) \le n$. Thus we conclude the claim.
\end{proof}

\vspace{0.5em}

Here we consider the multiplicities of the symmetric product and the exterior product of $H_{\Q}$ in $H_{\Q}^* \otimes \mathcal{L}_{n,\Q}(m)$.
\begin{cor}[explicit results as $\mathrm{GL}(n,\Q)$-modules]\label{cor:2}\quad 
\begin{enumerate}[$(1)$]
\item $[L^{(m-1,0, \ldots ,0)}:H_{\Q}^* \otimes \mathcal{L}_{n,\Q}(m)]=1$.
\item $[L^{(1^{m-1},0)}:H_{\Q}^* \otimes \mathcal{L}_{n,\Q}(m)]=1$ for $1 \le 1+m \le n$.
\end{enumerate}
\end{cor}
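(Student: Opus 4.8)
The plan is to evaluate both multiplicities by feeding Proposition \ref{prop:2} into Theorem \ref{thm:Kly} and then reading the answer off the table in Example \ref{exa:KW}.

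First I would determine, for each of the two weights $\lambda$ in the statement, which partitions $\mu$ actually contribute to the sum in Proposition \ref{prop:2}. For part $(1)$ we have $\lambda=(m-1,0,\ldots,0)$, so $(\lambda_1+1,\ldots,\lambda_n+1)=(m,1^{n-1})$; removing a vertical $(n-1)$-strip means deleting one box from $n-1$ of the $n$ rows, and a short case check shows that the only deletions leaving a partition are ``skip the first row'', giving $\mu=(m)$, and ``skip the second row'', giving $\mu=(m-1,1)$ (valid for $m\ge 2$; the trivial case $m=1$ leaves only $\mu=(1)$ and is dealt with in the same way). For part $(2)$, $\lambda=(1^{m-1},0,\ldots,0)$ yields $(\lambda_1+1,\ldots,\lambda_n+1)=(2^{m-1},1^{n-m+1})$, and the analogous bookkeeping leaves exactly $\mu=(1^m)$ (skip the $m$-th row) and $\mu=(2,1^{m-2})$ (skip the first row); here the hypothesis $m+1\le n$ guarantees that all shapes involved have at most $n$ rows, so no contributions are lost. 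Proposition \ref{prop:2} therefore gives
\[
[L^{(m-1,0,\ldots,0)}:H_{\Q}^*\otimes\mathcal{L}_{n,\Q}(m)]=[L^{(m)}:\mathcal{L}_{n,\Q}(m)]+[L^{(m-1,1)}:\mathcal{L}_{n,\Q}(m)],
\]
\[
[L^{(1^{m-1},0)}:H_{\Q}^*\otimes\mathcal{L}_{n,\Q}(m)]=[L^{(1^m)}:\mathcal{L}_{n,\Q}(m)]+[L^{(2,1^{m-2})}:\mathcal{L}_{n,\Q}(m)].
\]

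Next I would evaluate the four multiplicities on the right. By Theorem \ref{thm:Kly}$(2)$, each $[L^{\mu}:\mathcal{L}_{n,\Q}(m)]$ equals $[\chi_1:\Res^{\mf{S}_m}_{\Cyc_m}S^{\mu}]$, and these are precisely the numbers recorded in Example \ref{exa:KW} (obtained there from Theorem \ref{thm:KW} by counting standard tableaux $T$ of shape $\mu$ with $\maj(T)\equiv 1\pmod m$). Reading off that table: $[L^{(m)}:\mathcal{L}_{n,\Q}(m)]=0$ for $m\ge 2$; $[L^{(m-1,1)}:\mathcal{L}_{n,\Q}(m)]=1$; $[L^{(1^m)}:\mathcal{L}_{n,\Q}(m)]$ equals $1$ if $m=2$ and $0$ if $m\ge 3$; and $[L^{(2,1^{m-2})}:\mathcal{L}_{n,\Q}(m)]$ equals $1$ if $m\ge 3$ and $0$ if $m=2$ (in which case $(2,1^0)=(2)$, so this value is already subsumed under the first one). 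Substituting into the two displays gives $0+1=1$ in part $(1)$, and $1+0=1$ (when $m=2$) resp. $0+1=1$ (when $m\ge 3$) in part $(2)$.

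The only real content lies in the case analysis of which $\mu$ survive the removal of the vertical $(n-1)$-strip, together with the pleasant fact that the parity-dependent values of $[L^{(1^m)}:\mathcal{L}_{n,\Q}(m)]$ and $[L^{(2,1^{m-2})}:\mathcal{L}_{n,\Q}(m)]$ are complementary, so their sum is identically $1$; the rest is a direct table lookup. As a consistency check, for $m\ge 3$ Morita's trace map $\mathrm{Tr}_{[m-1]}$ already exhibits $S^{m-1}H_{\Q}=L^{(m-1,0,\ldots,0)}$ as a quotient of $H_{\Q}^*\otimes\mathcal{L}_{n,\Q}(m)$, so in part $(1)$ it is only the reverse bound ``$\le 1$'' that requires the computation above.
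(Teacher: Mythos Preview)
Your argument is correct and follows the paper's proof essentially verbatim: apply Proposition \ref{prop:2} to identify the two contributing $\mu$'s in each case, convert via Theorem \ref{thm:Kly}(2) to multiplicities of $\chi_m^1$ in $\Res^{\mf{S}_m}_{\Cyc_m}S^\mu$, and read these off Example \ref{exa:KW}. The only (harmless) slip is calling the complementary behaviour of $[L^{(1^m)}:\mathcal{L}_{n,\Q}(m)]$ and $[L^{(2,1^{m-2})}:\mathcal{L}_{n,\Q}(m)]$ ``parity-dependent'': as your own values show, the dichotomy is $m=2$ versus $m\ge 3$, not even versus odd.
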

\begin{proof}\quad 
\begin{enumerate}[$(1)$]
\item By Proposition \ref{prop:2} and Theorem \ref{thm:Kly}, we have
\begin{eqnarray*}
[L^{(m-1,0, \ldots ,0)}:H_{\Q}^* \otimes \mathcal{L}_{n,\Q}(m)]&=&[L^{(m,1, \ldots ,1)}:L^{(1^{n-1},0)} \otimes \mathcal{L}_{n,\Q}(m)] \\
&=&[L^{(m,0, \ldots ,0)}: \mathcal{L}_{n,\Q}(m)]+[L^{(m-1,1,0, \ldots ,0)}:\mathcal{L}_{n,\Q}(m)] \\
&=&[\chi_m^1:\Res_{\Cyc_m}^{\mf{S}_m}S^{(m)}]+[\chi_m^1:\Res_{\Cyc_m}^{\mf{S}_m}S^{(m-1,1)}].
\end{eqnarray*}
The claim follows from Example \ref{exa:KW}.
\item By Proposition \ref{prop:2} and Theorem \ref{thm:Kly}, we have
\begin{eqnarray*}
[L^{(1^{m-1},0)}:H_{\Q}^* \otimes \mathcal{L}_{n,\Q}(m)]&=&[L^{(2^{m-1},1^{n-m+1})}:L^{(1^{n-1},0)} \otimes \mathcal{L}_{n,\Q}(m)] \\
&=&[L^{(1^m,0)}:\mathcal{L}_{n,\Q}(m)]+[L^{(2,1^{m-2},0)}: \mathcal{L}_{n,\Q}(m)] \\
&=&[\chi_m^1:\Res_{\Cyc_m}^{\mf{S}_m}S^{(1^m)}]+[\chi_m^1:\Res_{\Cyc_m}^{\mf{S}_m}S^{(2,1^{m-2})}].
\end{eqnarray*}
The claim also follows from Example \ref{exa:KW}.
\end{enumerate}
\end{proof}

\vspace{0.5em}

From the corollary above, we verify that each of the multiplicity of $S^k H$ and $\Lambda^k H$ in
$\mathrm{Der}^+(\mathcal{L}_{n,\Q})(k)=H_{\Q}^* \otimes \mathcal{L}_{n,\Q}(k+1)$ is just one.
We can write down each of a maximal vector of $S^k H_{\Q}$ and $\Lambda^k H_{\Q}$ in $\mathrm{Der}^+(\mathcal{L}_{n,\Q})(k)$.
A maximal vector of $S^k H_{\Q}$ is given by
\[ v_{(k)} := \sum_{i=2}^n x_i^* \otimes [x_i, x_1, \ldots, x_1] \in \mathrm{Der}^+(\mathcal{L}_{n,\Q})(k) \]
and that of $\Lambda^k H_{\Q}$ is given by
\[ v_{(1^k)} := \sum_{\sigma \in \mathfrak{S}} \sum_{l \neq \sigma(1)} \mathrm{sgn}(\sigma) x_l^* 
    \otimes [x_l, x_{\sigma(1)}, x_{\sigma(2)}, \ldots, x_{\sigma(k)}] \in \mathrm{Der}^+(\mathcal{L}_{n,\Q})(k).  \]
We leave the proof to the reader as exercises.

\vspace{0.5em}

More generally, we show that the multiplicity of $L^\lambda$ coincides with $[L^\lambda:H_{\Q}^{\otimes{k}}]=\dim{S^\lambda}$.
In other words, the following theorem and corollary describes the kernel of the contraction map $\Phi_{\Q}^k$.
\begin{prop}\label{prop:3}
As a $\mathrm{GL}(n,\Q)$-module, we have a direct decomposition 
\[
H_{\Q}^* \otimes \mathcal{L}_{n,\Q}(k+1) \cong H_{\Q}^{\otimes{k}} \oplus W
\]
for some subrepresentation $W$ such that every irreducible components of $W$ are non-polynomial representations.
Especially, the kernel of the contraction map $\Phi_{\Q}^k$ is isomorphic to $W$.
\end{prop}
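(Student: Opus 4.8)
Since every rational representation of $\mathrm{GL}(n,\Q)$ is completely reducible, both assertions will follow once we establish (a) that $[L^\lambda : H_{\Q}^* \otimes \mathcal{L}_{n,\Q}(k+1)] = [L^\lambda : H_{\Q}^{\otimes k}]$ for every $\lambda \in P^+$, and (b) that $\Phi_{\Q}^k$ is surjective. Indeed, every weight of $H_{\Q}^* \otimes \mathcal{L}_{n,\Q}(k+1)$ has coordinate sum $(k+1)+(-1)=k$, so every polynomial irreducible occurring in it is some $L^\lambda$ with $\lambda \vdash k$; hence (a) says exactly that the polynomial part of $H_{\Q}^* \otimes \mathcal{L}_{n,\Q}(k+1)$ is isomorphic to $H_{\Q}^{\otimes k}$, and we take $W$ to be a complement of it, automatically a sum of non-polynomial irreducibles. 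Given (b) as well: $\Phi_{\Q}^k$ annihilates $W$, because there is no nonzero $\mathrm{GL}(n,\Q)$-homomorphism from a non-polynomial module into the polynomial module $H_{\Q}^{\otimes k}$, so it induces a surjective endomorphism of $H_{\Q}^{\otimes k}$; being an endomorphism of a finite-dimensional module it is an isomorphism, whence $\Ker \Phi_{\Q}^k = W$.

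The heart is (a), which I would turn into a combinatorial identity. Both sides vanish unless $\lambda \vdash k$ with $\ell(\lambda) \le n$, in which case the right-hand side is $\dim S^\lambda$ by Schur--Weyl duality (Theorem \ref{thm:SWgl}). By Proposition \ref{prop:2}, the left-hand side equals $\sum_\mu [L^\mu : \mathcal{L}_{n,\Q}(k+1)]$, where $\mu$ ranges over the partitions obtained from $(\lambda_1+1, \ldots, \lambda_n+1)$ by deleting a vertical $(n-1)$-strip; unwinding, these $\mu$ are exactly the partitions of $k+1$ with at most $n$ rows obtained from $\lambda$ by adding one box, and the size of $n$ (it is enough that $n \ge k+1$, as when $n \ge k+2$) ensures that no one-box extension of $\lambda$ is excluded by the length bound. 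By Klyachko's theorem (Theorem \ref{thm:Kly}) and the Kra\'{s}kiewicz--Weyman description (Theorem \ref{thm:KW}), $[L^\mu : \mathcal{L}_{n,\Q}(k+1)]$ is the number of standard tableaux of shape $\mu$ with major index $\equiv 1 \pmod{k+1}$, so (a) amounts to
\[ \sum_{\mu = \lambda + \square} \#\bigl\{\, T \in \mathrm{SYT}(\mu) : \maj(T) \equiv 1 \pmod{k+1} \,\bigr\} \;=\; \#\,\mathrm{SYT}(\lambda), \]
the sum over all one-box extensions of $\lambda$. I would prove this via $f^\mu(q) := \sum_{T \in \mathrm{SYT}(\mu)} q^{\maj(T)}$: the $q$-hook-length formula, i.e.\ the principal specialization $f^\mu(q) = \bigl(\prod_{i=1}^{|\mu|}(1-q^i)\bigr)\, s_\mu(1,q,q^2,\ldots)$, combined with the Pieri rule $\sum_{\mu = \lambda + \square} s_\mu = s_\lambda\, s_{(1)}$ (Theorem \ref{thm:pigl} with $k=1$) and $s_{(1)}(1,q,q^2,\ldots) = (1-q)^{-1}$, gives $\sum_{\mu = \lambda + \square} f^\mu(q) = (1+q+\cdots+q^{k})\, f^\lambda(q)$; applying the roots-of-unity filter to extract the coefficients in degrees $\equiv 1 \pmod{k+1}$ and using that $1+q+\cdots+q^{k}$ vanishes at every $(k+1)$-st root of unity except $1$, only the value at $q=1$ survives, and it equals $f^\lambda(1) = \#\,\mathrm{SYT}(\lambda) = \dim S^\lambda$.

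For (b), $\mathrm{Im}\,\Phi_{\Q}^k$ is a $\mathrm{GL}(n,\Q)$-submodule of $H_{\Q}^{\otimes k}$, so it is enough to realize each spanning tensor $x_{j_1} \otimes \cdots \otimes x_{j_k}$ in the image. As $n$ exceeds $k$, choose $x_i$ with $i \notin \{j_1, \ldots, j_k\}$; expanding by $\iota([a,b]) = \iota(a)\iota(b) - \iota(b)\iota(a)$ one checks that the only monomial of $\iota_{k+1}\bigl([x_i, x_{j_1}, \ldots, x_{j_k}]\bigr)$ beginning with $x_i$ is $x_i \otimes x_{j_1} \otimes \cdots \otimes x_{j_k}$, occurring with coefficient $1$; hence the derivation $x_i \mapsto [x_i, x_{j_1}, \ldots, x_{j_k}]$, $x_t \mapsto 0$ $(t \ne i)$, is sent by $\Phi^k$ to $x_{j_1} \otimes \cdots \otimes x_{j_k}$. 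So $\Phi_{\Q}^k$ is onto, and the proof is complete.

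The only step that is not essentially formal is the combinatorial identity in the second paragraph. Once (a) has been recast as that $\maj$-counting statement the $q$-series manipulation disposes of it quickly, but seeing the recasting and assembling the right ingredients --- principal specialization, Pieri, and a roots-of-unity filter against $1+q+\cdots+q^{k}$ --- is where the real work lies; complete reducibility, Schur--Weyl, Proposition \ref{prop:2}, Klyachko's theorem, and the tensor computation for (b) are all routine.
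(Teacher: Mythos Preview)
Your proof is correct, and it shares with the paper the initial reduction via Proposition~\ref{prop:2} and Klyachko's theorem to the identity
\[
\sum_{\mu=\lambda+\square}\bigl[\chi_1:\Res^{\mf{S}_{k+1}}_{\Cyc_{k+1}}S^{\mu}\bigr]=\dim S^{\lambda},
\]
but you resolve this identity by a different route. The paper recognises the left side as $\bigl[\chi_1:\Res^{\mf{S}_{k+1}}_{\Cyc_{k+1}}\Ind^{\mf{S}_{k+1}}_{\mf{S}_k}S^{\lambda}\bigr]$ (branching rule), applies Frobenius reciprocity to rewrite it as $\bigl[S^{\lambda}:\Res^{\mf{S}_{k+1}}_{\mf{S}_k}\Ind^{\mf{S}_{k+1}}_{\Cyc_{k+1}}\chi_1\bigr]$, and then invokes Mackey's theorem for the pair $(\mf{S}_k,\Cyc_{k+1})$ inside $\mf{S}_{k+1}$: since these two subgroups generate $\mf{S}_{k+1}$ and intersect trivially, $\Res^{\mf{S}_{k+1}}_{\mf{S}_k}\Ind^{\mf{S}_{k+1}}_{\Cyc_{k+1}}\chi_1$ is the regular representation of $\mf{S}_k$, and the multiplicity of $S^\lambda$ there is $\dim S^{\lambda}$. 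This stays entirely within finite-group representation theory and requires no outside input. Your route---passing through Kra\'{s}kiewicz--Weyman to a major-index count, then using the principal specialisation identity $f^{\mu}(q)=\prod_{i=1}^{|\mu|}(1-q^i)\,s_{\mu}(1,q,q^2,\dots)$, Pieri, and a roots-of-unity filter---is a genuine alternative; it trades the Mackey step for a $q$-series computation and imports the (nontrivial) principal specialisation formula, which the paper does not need. You also spell out the surjectivity of $\Phi_{\Q}^k$ and deduce $\Ker\Phi_{\Q}^k\cong W$ carefully, whereas the paper leaves that part implicit.
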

\begin{proof}
We shall prove that 
\[
[L^\lambda:H_{\Q}^* \otimes \mathcal{L}_{n,\Q}(k+1)]=\dim{S^\lambda} \,\, (=[L^\lambda: H_{\Q}^{\otimes{k}}])
\]
for any partition $\lambda$ of $k$.
By Proposition \ref{prop:2}, we have 
\[
[L^\lambda:H_{\Q}^* \otimes \mathcal{L}_{n,\Q}(k+1)]=\sum_{\mu}[L^\mu: \mathcal{L}_{n,\Q}(k+1)]
\]
where $\mu$ runs over all elements in the set of partitions obtained by removing a vertical $(n-1)$-strip from $(\lambda_1+1, \ldots ,\lambda_n+1)$.
But for a partition $\lambda$, this set coincides with the set of partitions obtained by adding one box to $\lambda$. Thus, by Theorem \ref{thm:Kly}, we have
\begin{eqnarray*}
[L^\lambda:H_{\Q}^* \otimes \mathcal{L}_{n,\Q}(k+1)]&=&\sum_{x}[L^{\lambda \sqcup\{x\}}:\mathcal{L}_{n,\Q}(k+1)] \\
&=&\sum_{x}[\chi_{1}:\Res_{\Cyc_{k+1}}^{\mf{S}_{k+1}}S^{\lambda \sqcup\{x\}}]
   =\left[\chi_{1}:\bigoplus_{x}\Res_{\Cyc_{k+1}}^{\mf{S}_{k+1}}S^{\lambda \sqcup\{x\}}\right] \\
\end{eqnarray*}
where $x$ runs over all addable boxes to $\lambda$.

\vspace{0.5em}

Recall that $\Ind_{\mf{S}_k}^{\mf{S}_{k+1}}S^\lambda \cong \bigoplus_{x}S^{\lambda \sqcup\{x\}}$
where $x$ runs over the set of addable boxes to $\lambda$. Therefore we obtain
\begin{eqnarray*}
[L^\lambda:H_{\Q}^* \otimes \mathcal{L}_{n,\Q}(k+1)]=
\left[\chi_{1}:\Res_{\Cyc_{k+1}}^{\mf{S}_{k+1}}\Ind_{\mf{S}_k}^{\mf{S}_{k+1}}S^{\lambda}\right]=\left[
S^\lambda:\Res_{\mf{S}_k}^{\mf{S}_{k+1}}\Ind_{\Cyc_{k+1}}^{\mf{S}_{k+1}}\chi_{1}\right].
\end{eqnarray*}
by the Frobenius reciprocity.
We shall use the Mackey's decomposition theorem for $\mf{S}_{k+1}$, $\mf{S}_k$ and $\Cyc_{k+1}$.
But since $\mf{S}_k \cup \Cyc_k$ generates $\mf{S}_{k+1}$, in this case the $(\mf{S}_k,\Cyc_{k+1})$-coset is trivial.
Moreover $\mf{S}_k \cap \Cyc_{k+1}=\{1\}$. Thus by the Mackey's decomposition theorem, we have
\[
\left[S^\lambda:
\Res_{\mf{S}_k}^{\mf{S}_{k+1}}\Ind_{\Cyc_{k+1}}^{\mf{S}_{k+1}}\chi_{\zeta_{k+1}}\right]=\left[S^\lambda:
\Ind_{\{1\}}^{\mf{S}_{k}}\Res^{\Cyc_{k+1}}_{\{1\}}\chi_{\zeta_{k+1}}
\right].
\]
Here $\Ind_{\{1\}}^{\mf{S}_{k}}\Res^{\Cyc_{k+1}}_{\{1\}}\chi_{\zeta_{k+1}}=\Ind_{\{1\}}^{\mf{S}_{k}}(\triv)$ is isomorphic to the regular representation of $\mf{S}_k$.
Since the multiplicity of $S^\lambda$ in the regular representation of $\mf{S}_k$ is equal to the dimension of $S^\lambda$,
we conclude $[L^\lambda:H_{\Q}^* \otimes \mathcal{L}_{n,\Q}(k+1)]=\dim{S^\lambda}$. 
\end{proof}
\begin{cor}\label{cor:3}
Under the notation above, we have
\[ W \cong \bigoplus_{\mu;\ell(\mu) \le n}[L^\mu:\mathcal{L}_{n,\Q}(k+1)]L^{\{\mu;(1)\}}. \]
\end{cor}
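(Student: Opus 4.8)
The plan is to combine Proposition \ref{prop:3} with the tensor-product decomposition of Corollary \ref{cor:dtp}. As a $\mathrm{GL}(n,\Q)$-module the degree $k+1$ part of the free Lie algebra decomposes as
\[ \mathcal{L}_{n,\Q}(k+1) \cong \bigoplus_{\mu;\,\ell(\mu)\le n} [L^\mu:\mathcal{L}_{n,\Q}(k+1)]\, L^{\{\mu;0\}}, \]
the sum running over partitions $\mu$ of $k+1$, so it is enough to understand $H_{\Q}^* \otimes L^{\{\mu;0\}}$ for each such $\mu$. Since every partition $\mu$ of $k+1$ satisfies $\ell(\mu)\le k+1$, under the running hypothesis $n\ge k+2$ we have $1+\ell(\mu)\le n$, so Corollary \ref{cor:dtp} applies with $\sigma=\mu$ and, using $H_{\Q}^* \cong L^{\{0;(1)\}}$, gives
\[ H_{\Q}^* \otimes L^{\{\mu;0\}} \;\cong\; L^{\{0;(1)\}} \otimes L^{\{\mu;0\}} \;\cong\; L^{\{\mu;(1)\}} \oplus \bigoplus_{\lambda;\,\ell(\lambda)\le n} \LR_{(1),\lambda}^{\mu}\, L^{\{\lambda;0\}}. \]

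The next step is to sort these summands by polynomiality. The highest weight of $L^{\{\mu;(1)\}}$ has last coordinate $-1$, so $L^{\{\mu;(1)\}}$ is a non-polynomial irreducible representation, whereas each $L^{\{\lambda;0\}}=L^\lambda$ is polynomial. Summing the displayed isomorphism over $\mu$ with multiplicities $[L^\mu:\mathcal{L}_{n,\Q}(k+1)]$, I conclude that the sum of the non-polynomial isotypic components of $H_{\Q}^* \otimes \mathcal{L}_{n,\Q}(k+1)$ is exactly $\bigoplus_{\mu;\,\ell(\mu)\le n} [L^\mu:\mathcal{L}_{n,\Q}(k+1)]\, L^{\{\mu;(1)\}}$.

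Finally I invoke Proposition \ref{prop:3}: it provides a splitting $H_{\Q}^* \otimes \mathcal{L}_{n,\Q}(k+1) \cong H_{\Q}^{\otimes k} \oplus W$ in which $H_{\Q}^{\otimes k}$ is polynomial and every irreducible constituent of $W$ is non-polynomial. By complete reducibility (Cartan--Weyl highest weight theory) and the uniqueness of the isotypic decomposition of a rational $\mathrm{GL}(n,\Q)$-module, $W$ must coincide with the sum of the non-polynomial isotypic components computed in the previous step, which is precisely the asserted formula.

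I do not anticipate a genuine difficulty: the only points needing care are checking the length bounds that make Corollary \ref{cor:dtp} applicable and keep $L^{\{\mu;(1)\}}$ well-defined, and confirming that $L^{\{\mu;(1)\}}$ is non-polynomial so that the polynomial/non-polynomial bookkeeping matches Proposition \ref{prop:3} cleanly. As an independent check one could bypass Corollary \ref{cor:dtp} altogether and argue directly from Proposition \ref{prop:2}, matching its ``remove a vertical $(n-1)$-strip'' description against the ``remove one box'' description used in the proof of Proposition \ref{prop:3}; I would keep this as a sanity check rather than the main line of argument.
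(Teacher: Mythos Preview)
Your proposal is correct and follows essentially the same approach as the paper, whose proof is the single line ``This follows from Corollary~\ref{cor:dtp}.'' You have simply spelled out what that sentence means: apply Corollary~\ref{cor:dtp} to each irreducible summand $L^{\{\mu;0\}}$ of $\mathcal{L}_{n,\Q}(k+1)$, note that the $L^{\{\mu;(1)\}}$ pieces are exactly the non-polynomial constituents, and match against the characterisation of $W$ in Proposition~\ref{prop:3}.
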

\begin{proof}
This follows from Corollary \ref{cor:dtp}.
\end{proof}

\vspace{0.5em}

In \cite{S11}, we have obtained a $\mathrm{GL}(n,\Q)$-equivariant exact sequence
\[ 0 \rightarrow \mathrm{Im}(\tau_{k,\Q}') \rightarrow \mathrm{Der}^+(\mathcal{L}_{n,\Q})(k) \rightarrow \mathcal{C}_n^{\Q}(k) \rightarrow 0 \]
for any $n \geq k+2$. Hence if we fix an integer $k \geq 2$, for any $n \geq k+2$
we can calculate the irreducible decompositions of $\mathcal{C}_n^\Q(k)$ and $\mathrm{Im}(\tau_{k,\Q}')$
using Proposition \ref{prop:1}, Theorem \ref{thm:Kly}, Proposition \ref{prop:2}, Proposition \ref{prop:3} and Corollary \ref{cor:3}.
We give tables of the irreducible decompositions of $\mathcal{C}_n^\Q(k)$ and $\mathrm{Im}(\tau_{k,\Q}')$
in the following.

\vspace{0.5em}

{\small
\begin{center}
{\renewcommand{\arraystretch}{1.3}
\begin{tabular}{|c|l|l|} \hline
  $k$  & \hspace{10em} $\mathcal{C}_n^\Q(k)=\mathrm{Coker}(\tau_{k,\Q}')$, \,\,\, $n \geq k+2$        &                         \\ \hline
  $1$  & $0$                                                 & Andreadakis \cite{And}  \\ \hline
  $2$  & $(2)$                                           & Pettet \cite{Pet}       \\ \hline
  $3$  & $(3) \oplus (1^3)$                          & Satoh \cite{S03}       \\ \hline
  $4$  & $(4) \oplus (2,2) \oplus (2,1^2)$  & Satoh \cite{S09} \\ \hline
  $5$  & $(5) \oplus (3,2) \oplus 2(3,1^2) \oplus (2^2,1) \oplus (1^5)$ & \\ \hline
  $6$  & $(6) \oplus 2(4,2) \oplus 2(4,1^2) \oplus (3^2) \oplus 2(3,2,1) \oplus (3,1^3) \oplus 2(2^3) \oplus (2^2,1^2) \oplus (2,1^4)$ & \\ \hline
  $7$  & $(7) \oplus 2(5,2) \oplus 3(5,1^2) \oplus 2(4,3) \oplus 5(4,2,1) \oplus 2(4,1^3) \oplus 3(3^2,1)$ & \\
& \hspace{1.1em} $\oplus 3(3,2^2)\oplus 5(3,2,1^2) \oplus 3(3,1^4) \oplus 2(2^3,1) \oplus 2(2^2,1^3) \oplus (1^7)$ & \\ \hline
\end{tabular}}
\end{center}
}

In the table above, for simplicity, we write $(\lambda)$ for an irreducible polynomial representation $L^{(\lambda)}$.
For $\mathrm{Im}(\tau_{k,\Q}')$, we have
\begin{eqnarray*}
{\renewcommand{\arraystretch}{1.2}
\begin{array}{|c|c|c|}
\hline
k & \text{polynomial part of} \ \mathrm{Im}(\tau_{k,\Q}') & \text{non-polynomial part of} \ \mathrm{Im}(\tau_{k,\Q}') \\
\hline
1 & (1) & (1,1) \\
\hline
2 & (1^2) & (2,1) \\
\hline
3 & 2(2,1) & (3,1) \oplus (2,1^2) \\
\hline
4 & 3(3,1) \oplus (2^2) \oplus 2(2,1^2) \oplus (1^4) & (4,1) \oplus (3,2) \oplus (3,1^2) \\
& & \oplus (2^2,1) \oplus (2,1^3)\\
\hline
5 & 4(4,1) \oplus 4(3,2) \oplus 4(3,1^2) & (5,1) \oplus (4,2) \oplus 2(4,1^2) \oplus (3^2) \\
 & \oplus 4(2^2,1) \oplus 4(2,1^3)& 3(3,2,1) \oplus (3,1^3) \oplus 2(2^2,1^2) \oplus (2,1^4)\\
\hline
6 & 5(5,1) \oplus 7(4,2) \oplus 8(4,1^2) \oplus 4(3^2) & (6,1) \oplus 2(5,2) \oplus 2(5,1^2) \oplus 2(4,3) \\
& \oplus 14(3,2,1) \oplus 9(3,1^3) \oplus 3(2^3)  & \oplus 5(4,2,1) \oplus 3(4,1^3) \oplus 3(3^2,1) \\
& \oplus 8(2^2,1^2) \oplus 4(2,1^4) \oplus (1^6) & \oplus 3(3,2^2) \oplus 5(3,2,1^2) \oplus 2(3,1^4) \\
& & \oplus 2(2^3,1) \oplus 2(2^2,1^3) \oplus (2,1^5)\\
\hline
7 & 6(6,1) \oplus 12(5,2) \oplus 12(5,1^2) \oplus 12(4,3) & (7,1) \oplus 2(6,2) \oplus 3(6,1^2) \oplus 4(5,3) \\
&  \oplus 30(4,2,1) \oplus 18(4,1^3) \oplus 18(3^2,1)  & \oplus 8(5,2,1) \oplus 4(5,1^3) \oplus (4^2) \oplus 9(4,3,1) \\
& \oplus 18(3,2^3) \oplus 30(3,2,1^2) \oplus 12(3,1^4) & \oplus 6(4,2^2) \oplus 12(4,2,1^2) \oplus 4(4,1^4) \oplus 6(3^2,2) \\
& \oplus 12(2^3,1) \oplus 12(2^2,1^3) \oplus 6(2,1^5) & \oplus 9(3,2^2,1) \oplus 8(3,2,1^3) \oplus 3(3,1^5) \oplus (2^4)\\
& & \oplus 4(2^3,1^2) \oplus 2(2^2,1^4) \oplus (2,1^6) \\
\hline
\end{array}}
\end{eqnarray*}

In the table above, $(\lambda)$ means an irreducible polynomial representation $L^{(\lambda)}$ in the polynomial part,
and $(\mu)$ means an irreducible non-polynomial representation $L^{\{\mu;(1)\}}$ in the non-polynomial part. 

\subsection{Decomposition of $\mathrm{Der}^+(\mathcal{L}_{n,\Q}^M)$}\label{Ss-DecM}
\hspace*{\fill}\ 

We have a basis (\ref{basis-chen}) of $\mathcal{L}_{n}^M(k)$ for each $k \geq 1$.
Note that an element $[x_2,x_1, \ldots ,x_1]$ is a maximal vector with weight $(k-1,1)$ in $\mathcal{L}_{n,\Q}^M(k)$.
The number of elements satisfying $i_1>i_2 \le i_3 \le \cdots \le i_k$ coincides with the number of semi-standard tableau of shape $(k-1,1)$.
Thus we see $\mathcal{L}_{n,\Q}^M(k)$ is isomorphic to the irreducible representation $L^{(k-1,1)}$
as a $\mathrm{GL}(n,\Q)$-module by Theorem \ref{thm:dim}. Using Corollary \ref{cor:dtp}, we have
\begin{prop}\label{prop:Der^M}
For any $k \geq 1$ and $n \geq k+2$,
\[ \mathrm{Der}^+(\mathcal{L}_{n,\Q}^M)(k) =H_{\Q}^* \otimes \mathcal{L}_{n,\Q}^M(k+1)
      \cong L^{\{(k,1);(1)\}} \oplus L^{\{(k);0\}} \oplus L^{\{(k-1,1);0\}}. \]
\end{prop}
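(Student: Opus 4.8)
The plan is to deduce the statement from the tensor-product rule in Corollary~\ref{cor:dtp}, once the two tensor factors have been identified as rational $\mathrm{GL}(n,\Q)$-modules. As recalled in the paragraph preceding the statement, the basis (\ref{basis-chen}) shows that $\mathcal{L}_{n,\Q}^M(m)$ carries a maximal vector of weight $(m-1,1)$ and has dimension equal to the number of semistandard tableaux of shape $(m-1,1)$, so $\mathcal{L}_{n,\Q}^M(m) \cong L^{(m-1,1)}$ for every $m \ge 2$; in particular $\mathcal{L}_{n,\Q}^M(k+1) \cong L^{(k,1)} = L^{\{(k,1);0\}}$ for $k \ge 1$. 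Together with the standard identification $H_{\Q}^* \cong L^{\{0;(1)\}}$ from the preliminaries on $\mathrm{GL}(n,\Q)$, this gives
\[
\mathrm{Der}^+(\mathcal{L}_{n,\Q}^M)(k) = H_{\Q}^* \otimes_{\Q} \mathcal{L}_{n,\Q}^M(k+1) \cong L^{\{0;(1)\}} \otimes L^{\{(k,1);0\}},
\]
and all of these identifications already hold over $\Q$, so no field extension is needed.

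Next I would apply Corollary~\ref{cor:dtp} with $\sigma = (k,1)$. Its hypothesis $1 + \ell(\sigma) \le n$ becomes $3 \le n$, which holds since $n \ge k+2 \ge 3$. The corollary then yields
\[
L^{\{0;(1)\}} \otimes L^{\{(k,1);0\}} \cong L^{\{(k,1);(1)\}} \oplus \bigoplus_{\lambda;\, \ell(\lambda) \le n} \LR_{(1),\lambda}^{(k,1)}\, L^{\{\lambda;0\}},
\]
so it remains to compute the coefficients $\LR_{(1),\lambda}^{(k,1)}$, i.e. the multiplicity of $s_{(k,1)}$ in $s_{(1)} s_{\lambda}$. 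By Pieri's formula (Theorem~\ref{thm:pigl}, applied with a single box), or directly by inspecting Young diagrams, this multiplicity is $1$ exactly when $(k,1)$ is obtained from $\lambda$ by adjoining one box, and $0$ otherwise. Removing a box from the diagram of $(k,1)$ in every admissible way produces precisely $\lambda = (k)$ (delete the box of the second row) and, when $k \ge 2$, $\lambda = (k-1,1)$ (delete the last box of the first row). Hence the sum contributes $L^{\{(k);0\}} \oplus L^{\{(k-1,1);0\}}$, which assembles into the asserted decomposition; for $k = 1$ the label $(k-1,1) = (0,1)$ is not a partition and the corresponding summand is simply absent.

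I do not expect any real obstacle: the argument is essentially a one-line consequence of Corollary~\ref{cor:dtp}, with the representation-theoretic work already carried out earlier and the identification of $\mathcal{L}_{n,\Q}^M(k+1)$ already in hand. The only point needing a brief check is that the three summands are pairwise non-isomorphic — the last two being polynomial and the first not, and all three having distinct highest weights — so that the right-hand side is a genuine multiplicity-free direct sum and nothing collapses.
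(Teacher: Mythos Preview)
Your proposal is correct and follows exactly the paper's approach: the paragraph preceding the proposition already identifies $\mathcal{L}_{n,\Q}^M(k+1)\cong L^{(k,1)}$ via the Chen basis and Theorem~\ref{thm:dim}, and the paper then simply writes ``Using Corollary~\ref{cor:dtp}, we have'' before stating the result, which is precisely the computation you spell out in detail. Your observation about the degenerate case $k=1$ (where $(k-1,1)$ is not a partition and that summand drops out) is a useful clarification that the paper glosses over.
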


Note that $L^{\{(k);0\}}$ is nothing but $S^k H_{\Q}$. In \cite{S06}, we showed that the cokernel of $\tau_k^M$
is isomorphic to $S^k H$ for any $n \geq 4$ and $k \geq 2$.
Hence from Proposition {\rmfamily \ref{prop:Der^M}}, we immediately obtain
\begin{prop}\label{prop:John^M}
For any $k \geq 1$ and $n \geq k+2$,
\[ \mathrm{Im}(\tau_{k,\Q}^M) \cong L^{\{(k,1);(1)\}} \oplus L^{\{(k-1,1);0\}}. \]
\end{prop}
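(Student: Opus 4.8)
The plan is to deduce the statement from Proposition \ref{prop:Der^M} together with the exact sequence of $\mathrm{GL}(n,\Z)$-modules
\[ 0 \rightarrow \mathrm{gr}^k(\mathcal{A}_n^M) \xrightarrow{\tau_k^M} H^* \otimes_{\Z} \mathcal{L}_n^M(k+1) \xrightarrow{\mathrm{Tr}_{[k]}^M} S^k H \rightarrow 0 \]
established in \cite{S06} for $k \geq 2$ and $n \geq 4$. First I would tensor this sequence with $\Q$; as $\Q$ is flat over $\Z$ the sequence stays exact, and since $\tau_k^M$ is injective we obtain that $\mathrm{Im}(\tau_{k,\Q}^M)$ equals the kernel of $(\mathrm{Tr}_{[k]}^M)_{\Q}$, that is, a $\mathrm{GL}(n,\Q)$-submodule of $\mathrm{Der}^+(\mathcal{L}_{n,\Q}^M)(k) = H_{\Q}^* \otimes \mathcal{L}_{n,\Q}^M(k+1)$ whose quotient is isomorphic to $S^k H_{\Q} = L^{\{(k);0\}}$.

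Next I would feed in Proposition \ref{prop:Der^M}, which says that for $n \geq k+2$ this module decomposes multiplicity-freely as
\[ \mathrm{Der}^+(\mathcal{L}_{n,\Q}^M)(k) \cong L^{\{(k,1);(1)\}} \oplus L^{\{(k);0\}} \oplus L^{\{(k-1,1);0\}}, \]
a sum of three pairwise non-isomorphic irreducibles. By Cartan-Weyl's highest weight theory $\mathrm{GL}(n,\Q)$-modules are completely reducible, so any submodule is a direct sum of a subcollection of these isotypic pieces; since the quotient by $\mathrm{Im}(\tau_{k,\Q}^M)$ is exactly $L^{\{(k);0\}}$, Schur's lemma forces
\[ \mathrm{Im}(\tau_{k,\Q}^M) \cong L^{\{(k,1);(1)\}} \oplus L^{\{(k-1,1);0\}}. \]
This settles all $k \geq 2$.

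The remaining case $k = 1$ must be treated directly, the exact sequence of \cite{S06} being available only for $k \geq 2$. Here $\mathcal{L}_n^M(2) = \mathcal{L}_n(2) = \Lambda^2 H$, so $\mathrm{Der}^+(\mathcal{L}_{n,\Q}^M)(1) = H_{\Q}^* \otimes \Lambda^2 H_{\Q}$; checking that $\tau_1^M$ is surjective (the Magnus generators of $\mathrm{IA}_n$ descend to $\mathrm{IA}_n^M$ and their $\tau_1^M$-images span the target, exactly as in the free group case) gives $\mathrm{Im}(\tau_{1,\Q}^M) = H_{\Q}^* \otimes \Lambda^2 H_{\Q}$, which by Corollary \ref{cor:dtp} is $L^{\{(1,1);(1)\}} \oplus L^{\{(1);0\}}$, matching the asserted formula once $(k-1,1)|_{k=1} = (0,1)$ is read as the partition $(1)$. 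I do not expect a genuine obstacle anywhere: all the substance already lies in Proposition \ref{prop:Der^M} and in the identification of $\mathrm{Coker}(\tau_k^M)$ from \cite{S06}. The only points needing a moment's attention are the flatness remark, so that tensoring with $\Q$ preserves exactness, and the use of complete reducibility, so that one may genuinely identify the kernel as a module and not merely at the level of characters; both are harmless since $\mathrm{GL}(n,\Q)$ is reductive in characteristic zero.
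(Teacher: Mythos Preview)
Your proposal is correct and follows exactly the same route as the paper: the paper simply observes that $L^{\{(k);0\}}=S^kH_{\Q}$ is the cokernel of $\tau_k^M$ by \cite{S06}, and then reads off the image from the decomposition in Proposition~\ref{prop:Der^M}. Your additional remarks on flatness, complete reducibility, and the separate handling of $k=1$ are harmless elaborations of steps the paper leaves implicit.
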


\subsection{Decomposition of $\mathrm{Der}^+(\mathcal{L}_{n,\Q}^N)$}\label{Ss-DecN}
\hspace*{\fill}\ 

We have a basis (\ref{basis-N}) in $\mathcal{L}_{n}^N(k)$ consisting of weight vectors.
Using this basis, we see the character of $\mathcal{L}_{n,\Q}^N(k)$ coincides with that of
$L^{(k-1,1)} \oplus (L^{(k-3,1)} \otimes L^{(1^2)})$. Thus,
\begin{eqnarray*}
   \mathcal{L}_{n,\Q}^N(k) &\cong& L^{(k-1,1)} \oplus (L^{(k-3,1)} \otimes L^{(1^2)}) \\
                           &\cong& L^{(k-1,1)} \oplus L^{(k-2,2)} \oplus L^{(k-2,1^2)} \oplus L^{(k-3,2,1)} \oplus L^{(k-3,1^3)}
\end{eqnarray*}
by Theorem \ref{thm:dim} and Pieri's formula (See Theorem \ref{thm:pigl}.). Using Corollary \ref{cor:dtp}, we have
\begin{prop}\label{prop:Der^N}
Let $W_1$ and $W_2$ be the polynomial part and the non-polynomial part of the irreducible decomposition of
$\mathrm{Der}^+(\mathcal{L}_{n,\Q}^N)(k) = H_{\Q}^* \otimes \mathcal{L}_{n,\Q}^N(k+1)$ respectively. Then we have
\[ W_1 \cong L^{(k)} \oplus 3L^{(k-1,1)} \oplus 2L^{(k-2,2)} \oplus 3L^{(k-2,1^2)} \oplus L^{(k-3,2,1)} \oplus L^{(k-3,1^3)} \]
and
\[ W_2 \cong L^{\{(k,1);(1)\}} \oplus L^{\{(k-1,2);(1)\}}
   \oplus L^{\{(k-1,1^2);(1)\}} \oplus L^{\{(k-2,2,1);(1)\}} \oplus L^{\{(k-2,1^3);(1)\}}. \]
\end{prop}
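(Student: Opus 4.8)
The plan is to deduce everything from Corollary \ref{cor:dtp}, exactly as in the proof of Proposition \ref{prop:Der^M}. First I would record the decomposition of the degree $(k+1)$ part of $\mathcal{L}_{n,\Q}^N$: replacing $k$ by $k+1$ in the isomorphism displayed just before the statement gives
\[ \mathcal{L}_{n,\Q}^N(k+1) \cong L^{(k,1)} \oplus L^{(k-1,2)} \oplus L^{(k-1,1^2)} \oplus L^{(k-2,2,1)} \oplus L^{(k-2,1^3)}, \]
which is valid as soon as $n$ is large enough that these five partitions have length at most $n$ (and $k$ is large enough that they are genuine partitions; for small $k$ the terms carrying non-partition labels are simply omitted). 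Each summand is a polynomial irreducible $L^{\{\sigma;0\}}$, so since $H_{\Q}^* \cong L^{\{0;(1)\}}$, tensoring and applying Corollary \ref{cor:dtp} to each of the five $\sigma$ yields
\[ H_{\Q}^* \otimes \mathcal{L}_{n,\Q}^N(k+1) \cong \Bigl( \bigoplus_{\sigma} L^{\{\sigma;(1)\}} \Bigr) \oplus \Bigl( \bigoplus_{\sigma} \bigoplus_{\lambda;\, \ell(\lambda)\le n} \LR_{(1),\lambda}^{\sigma} L^{\lambda} \Bigr). \]

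The non-polynomial part is then read off immediately: it is $W_2 \cong \bigoplus_\sigma L^{\{\sigma;(1)\}}$, which is precisely the claimed list $L^{\{(k,1);(1)\}} \oplus L^{\{(k-1,2);(1)\}} \oplus L^{\{(k-1,1^2);(1)\}} \oplus L^{\{(k-2,2,1);(1)\}} \oplus L^{\{(k-2,1^3);(1)\}}$.

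For the polynomial part $W_1$ I would use that $\LR_{(1),\lambda}^{\sigma}$, by Pieri's formula (Theorem \ref{thm:pigl}) applied to $s_{(1)} s_\lambda$, equals $1$ when $\lambda$ is obtained from $\sigma$ by deleting a single removable box and $0$ otherwise; hence the polynomial contribution of $L^{\{\sigma;0\}}$ is the direct sum of the $L^{\lambda}$ over removable boxes of $\sigma$. Enumerating: $(k,1)$ contributes $(k)$ and $(k-1,1)$; $(k-1,2)$ contributes $(k-1,1)$ and $(k-2,2)$; $(k-1,1^2)$ contributes $(k-1,1)$ and $(k-2,1^2)$; $(k-2,2,1)$ contributes $(k-2,2)$, $(k-2,1^2)$ and $(k-3,2,1)$; and $(k-2,1^3)$ contributes $(k-2,1^2)$ and $(k-3,1^3)$. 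Tallying multiplicities gives $L^{(k)}$ once, $L^{(k-1,1)}$ three times, $L^{(k-2,2)}$ twice, $L^{(k-2,1^2)}$ three times, $L^{(k-3,2,1)}$ once and $L^{(k-3,1^3)}$ once, which is exactly the asserted $W_1$.

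I do not expect a genuine obstacle: once the two inputs — the decomposition of $\mathcal{L}_{n,\Q}^N$ and Corollary \ref{cor:dtp} — are in hand, the argument is pure bookkeeping. The only points needing care are checking that the hypothesis $1+\ell(\sigma)\le n$ of Corollary \ref{cor:dtp} holds for all five $\sigma$ (so $n \ge 5$, and $n \geq k+2$ for consistency with the rest of the section) and handling the small-$k$ degenerate cases with the convention that any $L^{\lambda}$ whose $\lambda$ fails to be a partition is dropped. Since both inputs are valid over $\Q$, the resulting decomposition holds over $\Q$, in accordance with the remark at the beginning of Section \ref{S-HW}.
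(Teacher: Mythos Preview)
Your proposal is correct and follows exactly the approach the paper intends: the paper simply writes ``Using Corollary \ref{cor:dtp}, we have'' immediately after recording the decomposition of $\mathcal{L}_{n,\Q}^N(k)$, and you have supplied the bookkeeping that this sentence suppresses. Your enumeration of removable boxes and the resulting multiplicity count are accurate, and your remarks on the hypotheses $1+\ell(\sigma)\le n$ and on small-$k$ degeneracies are appropriate caveats that the paper leaves implicit.
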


In our paper \cite{S08}, we investigate the cokernel of the composition map
\[ \tau_{k, N}' : \mathrm{gr}^k(\mathcal{A}_n') \xrightarrow{\tau_k'} H^* \otimes_{\Z} \mathcal{L}_n(k+1)
                  \rightarrow H^* \otimes_{\Z} \mathcal{L}_n(k+1) \]
where the second map is induced from the natural projection $\mathcal{L}_n(k+1) \rightarrow \mathcal{L}_n^N(k+1)$.
In particular, we showed that
\[ \mathrm{Coker}((\tau_{k, N}')_{\Q}) \cong L^{(k)} \oplus L^{(k-2,1^2)} \]
as a $\mathrm{GL}(n,\Q)$-modules. Hence we see that
\begin{prop}\label{prop:John^N}
For any $k \geq 1$ and $n \geq k+2$,
\[ \mathrm{Im}((\tau_{k, N}')_{\Q}) \cong 3L^{(k-1,1)} \oplus 2L^{(k-2,2)} \oplus 2L^{(k-2,1^2)} \oplus L^{(k-3,2,1)} \oplus L^{(k-3,1^3)}
    \oplus W_2. \]
\end{prop}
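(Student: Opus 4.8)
The plan is to read off $\mathrm{Im}((\tau_{k,N}')_{\Q})$ as the kernel of the projection onto the cokernel, exploiting that all the $\mathrm{GL}(n,\Q)$-modules involved are completely reducible. By the very construction of $\tau_{k,N}'$ (the composition of $\tau_k'$ with the homomorphism induced from the natural projection $\mathcal{L}_n(k+1) \rightarrow \mathcal{L}_n^N(k+1)$), its image is a $\mathrm{GL}(n,\Q)$-submodule of $\mathrm{Der}^+(\mathcal{L}_{n,\Q}^N)(k) = H_{\Q}^* \otimes \mathcal{L}_{n,\Q}^N(k+1)$, and one has the short exact sequence of $\mathrm{GL}(n,\Q)$-modules
\[ 0 \longrightarrow \mathrm{Im}((\tau_{k,N}')_{\Q}) \longrightarrow \mathrm{Der}^+(\mathcal{L}_{n,\Q}^N)(k) \longrightarrow \mathrm{Coker}((\tau_{k,N}')_{\Q}) \longrightarrow 0 . \]

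First I would invoke Proposition~\ref{prop:Der^N} to present the middle term as $W_1 \oplus W_2$, with $W_1$ its polynomial part and $W_2$ its (purely non-polynomial) part as listed there, and then quote from \cite{S08} that $\mathrm{Coker}((\tau_{k,N}')_{\Q}) \cong L^{(k)} \oplus L^{(k-2,1^2)}$ as $\mathrm{GL}(n,\Q)$-modules. Since every rational representation of $\mathrm{GL}(n,\Q)$ is completely reducible (Cartan--Weyl's highest weight theory), for each dominant weight $\mu$ the multiplicity of $L^\mu$ in $\mathrm{Im}((\tau_{k,N}')_{\Q})$ equals its multiplicity in $\mathrm{Der}^+(\mathcal{L}_{n,\Q}^N)(k)$ minus its multiplicity in $\mathrm{Coker}((\tau_{k,N}')_{\Q})$, and the isomorphism class of a semisimple module is determined by these multiplicities. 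So the whole proof reduces to a bookkeeping of multiplicities in the range $n \geq k+2$ where both inputs apply.

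Carrying this out: the two summands $L^{(k)}$ and $L^{(k-2,1^2)}$ of the cokernel are polynomial, hence can only be subtracted from $W_1$; removing the unique copy of $L^{(k)}$ and one of the three copies of $L^{(k-2,1^2)}$ leaves $3L^{(k-1,1)} \oplus 2L^{(k-2,2)} \oplus 2L^{(k-2,1^2)} \oplus L^{(k-3,2,1)} \oplus L^{(k-3,1^3)}$, while $W_2$ (whose constituents are non-polynomial and therefore absent from the cokernel) passes to the image unchanged. Adding the two parts gives precisely the asserted decomposition. I do not expect a genuine obstacle: once Proposition~\ref{prop:Der^N} and the cokernel computation of \cite{S08} are in hand, the statement is a direct multiplicity count. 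The only minor points to verify are that $L^{(k)}$ and $L^{(k-2,1^2)}$ really occur in $W_1$ with at least the multiplicities being subtracted (immediate from Proposition~\ref{prop:Der^N}) and that the description of $\mathrm{Coker}((\tau_{k,N}')_{\Q})$ imported from \cite{S08} uses the same normalization of the target module; both are routine.
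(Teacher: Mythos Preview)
Your proposal is correct and matches the paper's approach exactly: the paper derives the proposition as an immediate consequence (``Hence we see that'') of Proposition~\ref{prop:Der^N} together with the identification $\mathrm{Coker}((\tau_{k,N}')_{\Q}) \cong L^{(k)} \oplus L^{(k-2,1^2)}$ from \cite{S08}, which is precisely the multiplicity subtraction you spell out.
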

Here we mention a relation between $\tau_{k, N}'$ and the $k$-th Johnson homomorphism of $\mathrm{Aut}\,F_n^N$.
Let $\mathrm{IA}_n^N$ be the IA-automorphism group of $F_n^N$. Then we can define the Johnson homomorphisms
\[ \eta_{k,N} : \mathrm{gr}^k(\mathcal{A}_n^N) \rightarrow H^* \otimes_{\Z} \mathcal{L}_n^N(k+1) \hspace{0.5em} \mathrm{and} \hspace{0.5em}
   \eta_{k,N}' : \mathrm{gr}^k(\mathcal{A}_n^{'N}) \rightarrow H^* \otimes_{\Z} \mathcal{L}_n^N(k+1) \]
by an argument similar to $\tau_k$ and $\tau_k'$ respectively. (See Section 2.1.2 in \cite{S06} for details.)
Then we have
\[ \mathrm{Im}(\tau_{k, N}') \subset \mathrm{Im}(\eta_{k, N}') \subset \mathrm{Im}(\eta_{k,N}) \subset H^* \otimes_{\Z} \mathcal{L}_n^N(k+1). \]

\section{Abelianization of $\mathrm{Der}^{+}(\mathcal{L}_n^M)$}\label{S-Chen}

In this section, we determine the abelianization of the derivation algebra of the Chen Lie algebra.
To begin with, in order to give an lower bound on it, we consider the Morita's trace map
\[ \mathrm{Tr}_{[k]} : H^* {\otimes}_{\Z} \mathcal{L}_n(k+1) \rightarrow S^k H \]
for $k \geq 2$ as mentioned in Subsection {\rmfamily \ref{Ss-Der}}.
Recently, using these trace maps Morita constructed a surjective graded Lie algebra homomorphism
\[ \Theta := \mathrm{id}_1 \oplus \bigoplus_{k \geq 2} \mathrm{Tr}_{[k]} : \mathrm{Der}^+(\mathcal{L}_n) \rightarrow (H^* \otimes_{\Z} \Lambda^2 H) \oplus
   \bigoplus_{k \geq 2} S^k H \]
where $\mathrm{id}_1$ is the identity map on the degree one part $H^* \otimes_{\Z} \Lambda^2 H$ of $\mathrm{Der}^+(\mathcal{L}_n)$,
and the target is understood to be an abelian Lie algebra.
In particular, Morita showed that $\Theta$ gives the abelianization of $\mathrm{Der}^+(\mathcal{L}_{n})$ up to degree $n(n-1)$,
based on a theorem of Kassabov in \cite{Kas}.
(See Theorem 25 in \cite{Mo3} for details.)

\vspace{0.5em}

On the other hand, $\Theta$ naturally induces a surjective graded Lie algebra homomorphism
\[ \Theta^M := \mathrm{id}_1 \oplus \bigoplus_{k \geq 2} \mathrm{Tr}_{[k]}^M : \mathrm{Der}^+(\mathcal{L}_n^M) \rightarrow (H^* \otimes_{\Z} \Lambda^2 H) \oplus
   \bigoplus_{k \geq 2} S^k H, \]
and hence
\[ \mathrm{Der}^+(\mathcal{L}_n^M)^{\mathrm{ab}} \rightarrow (H^* \otimes_{\Z} \Lambda^2 H) \oplus
   \bigoplus_{k \geq 2} S^k H. \]
In order to prove this is an isomorphism, it suffices to show that for any $k \geq 2$, the degree $k$ part of $\mathrm{Der}^+(\mathcal{L}_n^M)^{\mathrm{ab}}$
is generated by
\[ \binom{n+k-1}{k} = \rank_{\Z} S^k H \]
elements as an abelian group.

\vspace{0.5em}

Let $(\mathrm{Der}^+(\mathcal{L}_n^M))^{\mathrm{ab}}(k)$ be the degree $k$ part of $\mathrm{Der}^+(\mathcal{L}_n^M)^{\mathrm{ab}}$.
Then as a $\Z$-module, $(\mathrm{Der}^+(\mathcal{L}_n^M))^{\mathrm{ab}}(k)$ is generated by
\[ \mathfrak{E} := \{ x_i^* \otimes [x_{i_1},x_{i_2}, \ldots , x_{i_{k+1}}] \,\,|\,\, 1 \leq i, i_j \leq n \} \]
where $x_1^*, \ldots, x_n^*$ is the dual basis of $H^*$ with respect to  $x_1, \ldots, x_n \in H$.
To reduce the generators in $\mathfrak{E}$, we prepare some lemmas. The lemmas below essentially follows from the facts obtained in our previous paper \cite{S06}.
(See \cite{S06} for the proofs.)

\begin{lem}\label{MY1}
Let $l \geq 2$ and $n \geq 2$. For any element
$[x_{i_1},x_{i_2}, x_{j_1}, \ldots , x_{j_l}] \in \mathcal{L}_n^M(l+2)$
and any $\lambda \in \mathfrak{S}_{l}$,
\[ [x_{i_1},x_{i_2}, x_{j_1}, \ldots , x_{j_{l}}]
    = [x_{i_1},x_{i_2}, x_{j_{\lambda(1)}} \ldots , x_{j_{\lambda(l)}}]. \]
\end{lem}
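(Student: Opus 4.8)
The plan is to work inside the tensor algebra $T(H)$, using the embedding $\iota$ of the free Lie algebra and its metabelian quotient. Recall that $\mathcal{L}_n^M$ is the quotient of $\mathcal{L}_n$ by the second derived subalgebra, so in $\mathcal{L}_n^M$ every iterated bracket $[x_{i_1},x_{i_2},x_{j_1},\dots,x_{j_l}]$ lands in $[\mathcal{L}_n^M,\mathcal{L}_n^M]$, i.e.\ in the abelian ideal $\mathcal{L}_n^M{}'$, once we have bracketed at least twice. The key point is that the adjoint action of $\mathcal{L}_n^M$ on this abelian ideal factors through the abelianization $H$, so $\mathrm{ad}(x_{j_a})$ and $\mathrm{ad}(x_{j_b})$ commute when applied to an element of $\mathcal{L}_n^M{}'$. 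Since $[x_{i_1},x_{i_2}]\in\mathcal{L}_n^M{}'$ already, we get
\[
[x_{i_1},x_{i_2},x_{j_1},\dots,x_{j_l}]
= \mathrm{ad}(x_{j_l})\cdots\mathrm{ad}(x_{j_1})\big([x_{i_1},x_{i_2}]\big),
\]
and the commuting operators $\mathrm{ad}(x_{j_1}),\dots,\mathrm{ad}(x_{j_l})$ may be reordered arbitrarily, which is exactly the claimed $\mathfrak{S}_l$-invariance.

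The first step I would carry out is to make the phrase ``adjoint action factors through $H$'' precise: for $u\in\mathcal{L}_n^M{}'$ and $y,z\in\mathcal{L}_n^M$ one has $[[u,y],z]-[[u,z],y]=[u,[y,z]]=0$ in $\mathcal{L}_n^M$, because $[y,z]\in\mathcal{L}_n^M{}'$ forces $[u,[y,z]]\in[\mathcal{L}_n^M{}',\mathcal{L}_n^M{}']=0$ by the metabelian relation. This is the Jacobi identity together with the defining relation of $\mathcal{L}_n^M$. Applying this identity repeatedly to swap adjacent generators among $x_{j_1},\dots,x_{j_l}$ generates all of $\mathfrak{S}_l$ (adjacent transpositions generate $\mathfrak{S}_l$), and since $[x_{i_1},x_{i_2}]\in\mathcal{L}_n^M{}'$ the hypothesis $l\geq 2$ ensures we always have at least one bracket available before we start commuting — actually already $l\geq 1$ suffices once the first two entries are bracketed, so the hypothesis $l\geq 2$ and $n\geq 2$ are just the ambient assumptions under which the statement will be used. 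Alternatively, one can cite directly the computations of \cite{S06}, where the basis \eqref{basis-chen} of $\mathcal{L}_n^M(k)$ with $i_2\leq i_3\leq\cdots\leq i_k$ is established; that monotonicity in the last $k-1$ slots is precisely the normal form achieved by the reordering asserted here, so the lemma is implicit in the proof that \eqref{basis-chen} spans.

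I do not expect a genuine obstacle here: the content is the single Jacobi-plus-metabelian identity above, and the rest is the elementary fact that adjacent transpositions generate the symmetric group. The only care needed is bookkeeping — ensuring that when we swap $x_{j_a}$ past $x_{j_{a+1}}$ the element on which they act is still in the abelian ideal $\mathcal{L}_n^M{}'$, which holds throughout because $[x_{i_1},x_{i_2},x_{j_1},\dots,x_{j_m}]\in\mathcal{L}_n^M{}'$ for every $m\geq 0$. So the main ``work'' is simply to state the proof cleanly, or to give the one-line reduction to \cite{S06}; given that the excerpt explicitly says ``See \cite{S06} for the proofs,'' the cleanest writeup is the two-line Jacobi argument followed by the induction on $l$ via adjacent transpositions.
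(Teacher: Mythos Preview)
Your argument is correct and is precisely the standard proof of this well-known fact about metabelian Lie algebras. The paper itself gives no proof here, deferring entirely to \cite{S06}, so your Jacobi-plus-metabelian computation is exactly the content one expects to find there; there is nothing to compare.
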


\begin{lem}\label{MY2}
Let $k \geq 2$ and $n \geq 4$. For any $i$ and $i_1, i_2, \ldots , i_{k+1} \in \{1,2 \ldots , n \}$, if $i_1, i_2 \neq i$,
\[ x_i^* \otimes [x_{i_1},x_{i_2}, \ldots , x_{i_{k+1}}] = 0 \in (\mathrm{Der}^+(\mathcal{L}_n^M))^{\mathrm{ab}}(k). \]
\end{lem}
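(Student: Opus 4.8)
The plan is to exhibit the element $x_i^* \otimes [x_{i_1}, \ldots, x_{i_{k+1}}]$ as a single commutator of two positive-degree derivations, so that it automatically vanishes in the abelianization. We may assume $i_1 \neq i_2$, since otherwise $[x_{i_1}, x_{i_2}, \ldots, x_{i_{k+1}}] = 0$ and there is nothing to prove. Set $c := [x_{i_1}, x_{i_2}, x_{i_3}, \ldots, x_{i_{k+1}}] \in \mathcal{L}_n^M(k+1)$ and $T := x_i^* \otimes c \in \mathrm{Der}^+(\mathcal{L}_n^M)(k)$. Since $i_1 \neq i_2$, at least one of $i_1, i_2$ differs from $i_3$; fix such an index and call it $p$.

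Consider the two derivations
\[ f := x_p^* \otimes [x_{i_1}, x_{i_2}] \in \mathrm{Der}^+(\mathcal{L}_n^M)(1) \quad\text{and}\quad g := x_i^* \otimes [x_p, x_{i_3}, x_{i_4}, \ldots, x_{i_{k+1}}] \in \mathrm{Der}^+(\mathcal{L}_n^M)(k-1), \]
where $\deg g = k-1 \geq 1$ because $k \geq 2$. I will show that $[f,g] = T$. Since $[f,g] = f \circ g - g \circ f$ is again a derivation, it is determined by its values on the generators $x_1, \ldots, x_n$. For each $t$ one has $(g \circ f)(x_t) = \delta_{tp}\, g([x_{i_1}, x_{i_2}]) = \delta_{tp}\bigl([g(x_{i_1}), x_{i_2}] + [x_{i_1}, g(x_{i_2})]\bigr) = 0$, because $g$ is supported on $x_i$ and $i_1, i_2 \neq i$; and $(f \circ g)(x_t) = \delta_{ti}\, f\bigl([x_p, x_{i_3}, \ldots, x_{i_{k+1}}]\bigr)$. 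So the whole computation reduces to evaluating $f\bigl([x_p, x_{i_3}, \ldots, x_{i_{k+1}}]\bigr)$.

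Applying the derivation $f$ — which sends $x_p$ to $[x_{i_1}, x_{i_2}]$ and every other generator to $0$ — to the bracket $[x_p, x_{i_3}, \ldots, x_{i_{k+1}}]$ by the Leibniz rule yields the sum of the terms obtained by replacing one occurrence of $x_p$ by $[x_{i_1}, x_{i_2}]$. The term from the leading entry is $[[x_{i_1},x_{i_2}], x_{i_3}, \ldots, x_{i_{k+1}}] = c$. In every other term $[x_{i_1},x_{i_2}]$ is preceded by the left-normed bracket $w$ of the entries occurring before it; since the entry immediately after $x_p$ is $x_{i_3} \neq x_p$, that entry is never the one replaced, so $w$ has weight $\geq 2$, i.e. $w \in [\mathcal{L}_n^M, \mathcal{L}_n^M]$. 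As also $[x_{i_1},x_{i_2}] \in [\mathcal{L}_n^M, \mathcal{L}_n^M]$, such a term lies in $[[\mathcal{L}_n^M, \mathcal{L}_n^M],[\mathcal{L}_n^M, \mathcal{L}_n^M]] = 0$. Hence $f\bigl([x_p, x_{i_3}, \ldots, x_{i_{k+1}}]\bigr) = c$, so $[f,g] = x_i^* \otimes c = T$, and therefore $T$ lies in $[\mathrm{Der}^+(\mathcal{L}_n^M), \mathrm{Der}^+(\mathcal{L}_n^M)]$; that is, $T = 0$ in $(\mathrm{Der}^+(\mathcal{L}_n^M))^{\mathrm{ab}}(k)$.

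The only delicate point is the last step, namely the vanishing of all but one of the terms produced by $f$: this is exactly where the metabelian relation $[[\mathcal{L}_n^M, \mathcal{L}_n^M],[\mathcal{L}_n^M, \mathcal{L}_n^M]] = 0$ (the same phenomenon underlying Lemma \ref{MY1}, in its guise as the freedom to reorder the tail of a left-normed bracket) is essential, and it is precisely in order to dispose of the one remaining "short-head'' term that we must arrange $p \neq i_3$ — which is possible exactly because $i_1 \neq i_2$.
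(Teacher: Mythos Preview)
Your proof is correct. The paper itself does not give a proof here but refers to \cite{S06}, so there is no detailed argument to compare against; your approach---exhibiting $x_i^*\otimes[x_{i_1},\ldots,x_{i_{k+1}}]$ as an explicit bracket $[f,g]$ of positive-degree derivations---is exactly the natural one and is almost certainly the argument intended.

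A couple of minor remarks. First, your argument does not actually use the hypothesis $n\ge 4$: once $i_1,i_2\neq i$ and $i_1\neq i_2$, the choice of $p\in\{i_1,i_2\}$ with $p\neq i_3$ exists and $p\neq i$ is automatic. The assumption $n\ge 4$ is presumably stated only because the surrounding lemmas need it. Second, your bookkeeping for the Leibniz expansion is right but could be stated more crisply: writing the entries of $[x_p,x_{i_3},\ldots,x_{i_{k+1}}]$ as $a_1=x_p,\,a_2=x_{i_3},\,\ldots$, the term replacing $a_1$ gives $c$, the term replacing $a_2$ vanishes since $i_3\neq p$, and any term replacing $a_m$ with $m\ge 3$ has the form $[[a_1,\ldots,a_{m-1}],[x_{i_1},x_{i_2}],a_{m+1},\ldots]$, whose innermost bracket lies in $[[\mathcal{L}_n^M,\mathcal{L}_n^M],[\mathcal{L}_n^M,\mathcal{L}_n^M]]=0$ because $m-1\ge 2$. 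This is precisely what you wrote, just with the indexing made explicit.
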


\begin{lem}\label{MY3}
Let $k \geq 2$ and $n \geq 4$. For any $i$ and $i_1, i_2, \ldots , i_{k} \in \{1,2 \ldots , n \}$ such that $i_1, i_2 \neq i$,
and any transposition $\lambda=(m \,\, m+1) \in \mathfrak{S}_k$,
\[ x_{i}^* \otimes [x_{i},x_{i_1}, \ldots , x_{i_{k}}] 
     = x_{i}^* \otimes [x_{i}, x_{i_{\lambda(1)}}, \ldots , x_{i_{\lambda(k)}}] \in (\mathrm{Der}^+(\mathcal{L}_n^M))^{\mathrm{ab}}(k). \]
\end{lem}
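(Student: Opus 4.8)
The plan is to reduce the claim to the single adjacent transposition $\lambda=(1\ 2)$, and then to rewrite the difference of the two generators — via the Jacobi identity inside $\mathcal{L}_n^M(k+1)$ — as a generator to which Lemma \ref{MY2} applies. First I would dispose of the transpositions $\lambda=(m\ m+1)$ with $m\geq 2$: then necessarily $k\geq 3$, and in $[x_i,x_{i_1},\ldots,x_{i_k}]$ the entries $x_{i_m},x_{i_{m+1}}$ both lie among the last $k-1\geq 2$ slots of the bracket, so Lemma \ref{MY1} already gives equality of the two bracket arguments in $\mathcal{L}_n^M(k+1)$, hence a fortiori in $(\mathrm{Der}^+(\mathcal{L}_n^M))^{\mathrm{ab}}(k)$. (If $i_1=i_2$ the case $m=1$ is likewise trivial.) So the only case with content is $\lambda=(1\ 2)$.

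For that case, I would compute in $\mathcal{L}_n(k+1)$, hence in its quotient $\mathcal{L}_n^M(k+1)$. With the left-normed convention for simple commutators, the Jacobi identity applied to $x_i,x_{i_1},x_{i_2}$ reads
\[ [[x_i,x_{i_1}],x_{i_2}] - [[x_i,x_{i_2}],x_{i_1}] = -[[x_{i_1},x_{i_2}],x_i]. \]
Applying the linear maps $z\mapsto[z,x_{i_3}],\ z\mapsto[z,x_{i_4}],\ \ldots,\ z\mapsto[z,x_{i_k}]$ in turn to both sides — which preserves the identity — yields
\[ [x_i,x_{i_1},x_{i_2},x_{i_3},\ldots,x_{i_k}] - [x_i,x_{i_2},x_{i_1},x_{i_3},\ldots,x_{i_k}] = -[x_{i_1},x_{i_2},x_i,x_{i_3},\ldots,x_{i_k}]. \]
Tensoring with $x_i^*$, the difference of the two elements of $(\mathrm{Der}^+(\mathcal{L}_n^M))^{\mathrm{ab}}(k)$ appearing in the statement equals $-\,x_i^*\otimes[x_{i_1},x_{i_2},x_i,x_{i_3},\ldots,x_{i_k}]$.

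It then remains to see that this last element vanishes in $(\mathrm{Der}^+(\mathcal{L}_n^M))^{\mathrm{ab}}(k)$. The bracket $[x_{i_1},x_{i_2},x_i,x_{i_3},\ldots,x_{i_k}]$ has $x_{i_1}$ and $x_{i_2}$ as its first two entries, and by hypothesis $i_1,i_2\neq i$; hence Lemma \ref{MY2} (which requires $k\geq 2$, $n\geq 4$) gives $x_i^*\otimes[x_{i_1},x_{i_2},x_i,x_{i_3},\ldots,x_{i_k}]=0$, and we are done. I do not anticipate a genuine obstacle here: once the problem is reduced to the transposition $(1\ 2)$, the argument is a bare-hands Jacobi manipulation followed by a single appeal to Lemma \ref{MY2}. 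The only points demanding care are the bookkeeping forced by the left-normed bracket convention and the observation that the hypothesis $i_1,i_2\neq i$ is exactly what is needed — and is still available — to invoke Lemma \ref{MY2} on the rewritten generator.
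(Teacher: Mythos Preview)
Your argument is correct. The reduction to the transposition $(1\ 2)$ via Lemma~\ref{MY1} is clean, the Jacobi computation is right (with the paper's left-normed convention the identity $[x_i,x_{i_1},x_{i_2}]-[x_i,x_{i_2},x_{i_1}]=-[x_{i_1},x_{i_2},x_i]$ holds as you wrote it), and the final appeal to Lemma~\ref{MY2} uses precisely the hypothesis $i_1,i_2\neq i$.

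As for comparison with the paper: there is nothing to compare against. The paper does not prove Lemma~\ref{MY3} (nor Lemmas~\ref{MY1}, \ref{MY2}, \ref{MY4}); it simply states that ``the lemmas below essentially follow from the facts obtained in our previous paper~\cite{S06}'' and refers the reader there. Your self-contained argument---Jacobi plus one invocation of Lemma~\ref{MY2}---is exactly the kind of computation one would expect to find in~\cite{S06}, and it has the virtue of making transparent why the hypothesis on $i_1,i_2$ is needed.
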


\begin{lem}\label{MY4}
Let $k \geq 2$ and $n \geq 4$. For any $i_2, \ldots , i_{k+1} \in \{1,2, \ldots , n \}$, we have
\[ x_{i}^* \otimes [x_{i},x_{i_2}, \ldots , x_{i_{k+1}}]
     = x_{j}^* \otimes [x_{j}, x_{i_2}, \ldots , x_{i_{k+1}}] \in (\mathrm{Der}^+(\mathcal{L}_n^M))^{\mathrm{ab}}(k) \]
for any $i \neq i_2$ and $j \neq i_2, i_{k+1}$.
\end{lem}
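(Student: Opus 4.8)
The plan is to exhibit the difference of the two derivations as a Lie bracket of two homogeneous derivations of positive degree — which is automatically zero in $(\mathrm{Der}^+(\mathcal{L}_n^M))^{\mathrm{ab}}(k)$ — and then to normalize the resulting terms by Lemmas \ref{MY1} and \ref{MY3}. Recall that $(\mathrm{Der}^+(\mathcal{L}_n^M))^{\mathrm{ab}}(k)$ is the quotient of $\mathrm{Der}^+(\mathcal{L}_n^M)(k) = H^* \otimes_{\Z} \mathcal{L}_n^M(k+1)$ by the subgroup spanned by all brackets $[D',E']$ with $D' \in \mathrm{Der}^+(\mathcal{L}_n^M)(a)$, $E' \in \mathrm{Der}^+(\mathcal{L}_n^M)(b)$, $a+b=k$, $a,b \geq 1$; and recall the metabelian relation in $\mathcal{L}_n^M$: any Lie bracket of two elements of weight $\geq 2$ vanishes. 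We may assume $i \neq j$, the case $i=j$ being trivial.

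For an auxiliary index $a$ to be chosen below, I would consider the degree $1$ derivation $D := x_a^* \otimes [x_i, x_{i_2}]$ (so $x_a \mapsto [x_i,x_{i_2}]$, and $x_t \mapsto 0$ for $t \neq a$) and the degree $k-1$ derivation $E := x_i^* \otimes [x_a, x_{i_3}, \ldots, x_{i_{k+1}}]$, where by Lemma \ref{MY1} the tail of the bracket defining $E$ is reordered so that its first entry is not $x_a$ (possible unless all of $i_3, \ldots, i_{k+1}$ equal $a$, a situation excluded by the choice of $a$). Next I would compute $[D,E] = D\circ E - E\circ D$ on the generators $x_1, \ldots, x_n$ of $H$ by the Leibniz rule. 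Since $D$ and $E$ are supported on $x_a$ and $x_i$ respectively, only $D(E(x_i))$ and $E(D(x_a))$ survive. In $D\big([x_a, x_{i_3}, \ldots, x_{i_{k+1}}]\big)$ every Leibniz term that inserts the weight-$2$ element $[x_i, x_{i_2}]$ anywhere but the leading slot creates a bracket of two weight-$\geq 2$ elements and hence vanishes by the metabelian relation, so $D(E(x_i)) = [[x_i,x_{i_2}], x_{i_3}, \ldots, x_{i_{k+1}}] = [x_i, x_{i_2}, x_{i_3}, \ldots, x_{i_{k+1}}]$ after re-sorting the tail by Lemma \ref{MY1}; and $E(D(x_a)) = [E(x_i), x_{i_2}] + [x_i, E(x_{i_2})] = [[x_a, x_{i_3}, \ldots, x_{i_{k+1}}], x_{i_2}]$ because $E(x_{i_2}) = 0$ (here $i \neq i_2$ enters). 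Hence
\[ [D,E] = x_i^* \otimes [x_i, x_{i_2}, x_{i_3}, \ldots, x_{i_{k+1}}] \;-\; x_a^* \otimes [x_a, x_{i_3}, \ldots, x_{i_{k+1}}, x_{i_2}]. \]

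Since the left-hand side is a bracket of derivations of degrees $1$ and $k-1 \geq 1$, it is zero in $(\mathrm{Der}^+(\mathcal{L}_n^M))^{\mathrm{ab}}(k)$; and by Lemmas \ref{MY1} and \ref{MY3} the second term equals $x_a^* \otimes [x_a, x_{i_2}, x_{i_3}, \ldots, x_{i_{k+1}}]$ modulo the commutator subalgebra. Therefore, for every legitimate choice of $a$ with $a \neq i$,
\[ x_i^* \otimes [x_i, x_{i_2}, \ldots, x_{i_{k+1}}] = x_a^* \otimes [x_a, x_{i_2}, \ldots, x_{i_{k+1}}] \quad \text{in } (\mathrm{Der}^+(\mathcal{L}_n^M))^{\mathrm{ab}}(k). \]
Running the identical argument with $j$ in place of $i$ (using $j \neq i_2$) gives the same statement with $j$ instead of $i$. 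Because $n \geq 4$, I can pick one auxiliary index $a \notin \{i,j\}$ making both instances legitimate, and chaining the two identities through this common $a$ yields $x_i^* \otimes [x_i, x_{i_2}, \ldots, x_{i_{k+1}}] = x_j^* \otimes [x_j, x_{i_2}, \ldots, x_{i_{k+1}}]$.

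The hard part is the index bookkeeping packaged in the word ``legitimate'': one must verify that the metabelian relation really does annihilate every extra Leibniz term when $a$ occurs among $i_3, \ldots, i_{k+1}$, that Lemma \ref{MY3} can be iterated to carry $x_{i_2}$ into the second slot of $[x_a, x_{i_3}, \ldots, x_{i_{k+1}}, x_{i_2}]$ despite its restriction on the leading two tail entries, and that for $n \geq 4$ a common auxiliary index avoiding all these coincidences can always be found. These are precisely the points settled in \cite{S06}; the additional hypothesis $j \neq i_{k+1}$ is convenient for the route taken there, although the argument sketched above only needs $i, j \neq i_2$.
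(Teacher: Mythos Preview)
The paper does not prove this lemma; it simply refers to \cite{S06}. Your reconstruction via a bracket $[D,E]$ of two positive-degree derivations is the natural route and is essentially the argument of \cite{S06}: with $D = x_a^* \otimes [x_i, x_{i_2}]$ and $E = x_i^* \otimes [x_a, x_{i_3}, \ldots, x_{i_{k+1}}]$ one indeed obtains
\[
[D,E] \;=\; x_i^* \otimes [x_i, x_{i_2}, \ldots, x_{i_{k+1}}] \;-\; x_a^* \otimes [x_a, x_{i_3}, \ldots, x_{i_{k+1}}, x_{i_2}]
\]
in $\mathrm{Der}^+(\mathcal{L}_n^M)(k)$, and this vanishes in the abelianization.

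Two small imprecisions are worth flagging. First, Lemma \ref{MY1} only permutes entries in positions $3$ and beyond of a simple commutator, so it cannot move $x_{i_3}$ out of the second slot of the bracket defining $E$ as you claim. This is harmless: if $i_3 = a$ then $E = 0$ and there is nothing to compute, while if $i_3 \neq a$ then no Leibniz contribution arises at slot $2$ and the contributions at slots $\geq 3$ vanish by the metabelian relation. (Your blanket statement that \emph{every} non-leading Leibniz term is a bracket of two weight-$\geq 2$ elements is false at slot $2$, where one factor has weight $1$; the point is simply that this slot never contributes once $i_3 \neq a$.) Second, the normalization of the term $x_a^* \otimes [x_a, x_{i_3}, \ldots, x_{i_{k+1}}, x_{i_2}]$ via Lemmas \ref{MY1} and \ref{MY3} requires both of the first two tail entries to differ from $a$, hence $a \neq i_2$ and $a \neq i_3$; together with $a \neq i$ this already uses three constraints, and chaining through a single common $a$ for both $i$ and $j$ demands $a \notin \{i, j, i_2, i_3\}$, which need not exist when $n = 4$ and these four indices are distinct. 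So ``pick $a \notin \{i, j\}$'' is not quite enough. This is exactly the case analysis carried out in \cite{S06}; your closing paragraph is right that the extra hypothesis $j \neq i_{k+1}$ is there to streamline this bookkeeping rather than being essential to the underlying idea.
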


Using Lemmas {\rmfamily \ref{MY2}} and {\rmfamily \ref{MY4}}, we see that $(\mathrm{Der}^+(\mathcal{L}_n^M))^{\mathrm{ab}}(k)$ is generated by
\[ \{ x_i^* \otimes [x_{i_1},x_{i_2}, \ldots , x_{i_{k+1}}] \,\,|\,\, 1 \leq i, i_j \leq n, \,\,\, i \neq i_2, i_{k+1} \}. \] 
Furthermore, by Lemma {\rmfamily \ref{MY4}} again,
\[ x_i^* \otimes [x_{i_1},x_{i_2}, \ldots , x_{i_{k+1}}], \hspace{1em} i \neq i_2, i_{k+1} \]
does not depend on the choice of $i$ such that $i \neq i_2, i_{k+1}$. Hence we can set
\[ s(i_1, \ldots, i_k) := x_i^* \otimes [x_{i_1},x_{i_2}, \ldots , x_{i_{k+1}}] \in (\mathrm{Der}^+(\mathcal{L}_n^M))^{\mathrm{ab}}(k) \]
for $i \neq i_1$, $i_k$. On the other hand, take any transposition $\sigma = (m \,\, m+1) \in \mathfrak{S}_k$. If
$2 \leq m \leq k-2$, we see $s(i_1, \ldots, i_k)=s(i_{\sigma(1)}, \ldots, i_{\sigma(k)})$ by Lemma {\rmfamily \ref{MY1}}.
If $m=k-1$, there exists some $1 \leq j \leq n$ such that $j \neq i_1, i_{k-1}, i_k$ since $n \geq 4$. Then we have
\begin{eqnarray*}
   s(i_1, \ldots, i_k) &=& x_j^* \otimes [x_{j},x_{i_1}, \ldots , x_{i_{k}}] = x_j^* \otimes [x_{j},x_{i_1}, \ldots , x_{i_{k-1}}, x_{i_k}] \\
                       &=& s(i_1, \ldots, i_k, i_{k-1}) = s(i_{\sigma(1)}, \ldots, i_{\sigma(k)})
\end{eqnarray*}
by Lemma {\rmfamily \ref{MY3}}. Similarly, we verify that $s(i_1, \ldots, i_k)=s(i_{\sigma(1)}, \ldots, i_{\sigma(k)})$ if $m=1$.
Therefore we conclude that for $n \geq 4$ and $k \geq 2$, $(\mathrm{Der}^+(\mathcal{L}_n^M))^{\mathrm{ab}}(k)$ is generated by
\[ \{ s(i_1, \ldots, i_k) \,\,|\,\, 1 \leq i_1 \leq \cdots \leq i_k \leq n \}. \]
Namely, we obtain
\begin{thm}\label{T-ES_Chen}
For $n \geq 4$, we have
\[ (\mathrm{Der}^+(\mathcal{L}_n^M))^{\mathrm{ab}} \cong (H^* \otimes_{\Z} \Lambda^2 H ) \oplus \bigoplus_{k \geq 2} S^k H. \]
More precisely, this isomorphism is given by the degree one part and the Morita's trace maps $\mathrm{Tr}_{[k]}$.
\end{thm}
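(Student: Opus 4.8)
The plan is to realize the desired isomorphism as the map induced on abelianizations by the surjection $\Theta^M$ constructed above, and then to prove it is an isomorphism degree by degree via a rank count. Since $\Theta^M = \mathrm{id}_1 \oplus \bigoplus_{k \geq 2} \mathrm{Tr}_{[k]}^M$ is a surjective \emph{graded} Lie algebra homomorphism onto an abelian Lie algebra, it factors through a surjective graded homomorphism
\[ \Phi : (\mathrm{Der}^+(\mathcal{L}_n^M))^{\mathrm{ab}} \twoheadrightarrow (H^* \otimes_{\Z} \Lambda^2 H) \oplus \bigoplus_{k \geq 2} S^k H, \]
so it suffices to check that $\Phi$ is injective in every degree $k \geq 1$.

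In degree one this is immediate: $\mathrm{Der}^+(\mathcal{L}_n^M)(1) = H^* \otimes_{\Z} \mathcal{L}_n^M(2) = H^* \otimes_{\Z} \Lambda^2 H$, and since $\mathrm{Der}^+(\mathcal{L}_n^M)$ is positively graded, the degree one part of the commutator ideal $[\mathrm{Der}^+(\mathcal{L}_n^M), \mathrm{Der}^+(\mathcal{L}_n^M)]$ vanishes; hence $(\mathrm{Der}^+(\mathcal{L}_n^M))^{\mathrm{ab}}(1) = H^* \otimes_{\Z} \Lambda^2 H$ and $\Phi$ restricts there to $\mathrm{id}_1$. For $k \geq 2$ the key point is to bound the number of generators of $(\mathrm{Der}^+(\mathcal{L}_n^M))^{\mathrm{ab}}(k)$ from above by $\binom{n+k-1}{k} = \rank_{\Z} S^k H$. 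Starting from the generating set $\mathfrak{E}$, I would first use Lemma \ref{MY2} to delete all $x_i^* \otimes [x_{i_1}, \ldots, x_{i_{k+1}}]$ with $i_1, i_2 \neq i$, then Lemma \ref{MY4} to move the dual index $i$ and to see that the remaining generators $x_i^* \otimes [x_{i_1}, \ldots, x_{i_{k+1}}]$ with $i \neq i_2, i_{k+1}$ depend only on the tuple $(i_1, \ldots, i_k)$; denoting this common value $s(i_1, \ldots, i_k)$, Lemma \ref{MY1} handles swaps of adjacent middle entries and Lemma \ref{MY3}, with $n \geq 4$ supplying an auxiliary index distinct from the two relevant ones, handles the remaining adjacent transpositions, so that $s$ is fully symmetric in its arguments. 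This leaves $\{ s(i_1, \ldots, i_k) \mid 1 \leq i_1 \leq \cdots \leq i_k \leq n \}$, a set of cardinality $\binom{n+k-1}{k}$.

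The theorem then follows from an elementary observation: if a finitely generated abelian group $A$ generated by $m$ elements admits a surjection onto $\Z^m$, then composing a surjection $\Z^m \twoheadrightarrow A$ with it yields a surjective endomorphism of $\Z^m$, which is necessarily an isomorphism, forcing $A \cong \Z^m$ and the given surjection $A \twoheadrightarrow \Z^m$ to be an isomorphism. Applying this with $A = (\mathrm{Der}^+(\mathcal{L}_n^M))^{\mathrm{ab}}(k)$, $m = \binom{n+k-1}{k}$ and the surjection $\Phi_k$ onto $S^k H$ shows that $\Phi_k$ is an isomorphism for every $k \geq 2$, which together with the degree one case finishes the proof and also identifies the isomorphism with the degree one part and the Morita trace maps.

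I expect the main obstacle to be the generator-reduction step for $k \geq 2$: the four lemmas themselves are quoted from \cite{S06}, but one still has to organize their repeated application carefully and, in particular, verify that the hypothesis $n \geq 4$ is precisely what guarantees, at each stage, an index avoiding the (at most three) forbidden ones, so that every adjacent transposition of the arguments of $s$ can be realized. Everything else — the degree one computation, the factorization through the abelianization, and the final rank argument — is routine.
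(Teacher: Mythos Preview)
Your proposal is correct and follows essentially the same route as the paper: construct the surjection $\Theta^M$, pass to the abelianization, and then reduce the generating set $\mathfrak{E}$ in degree $k\geq 2$ to the $\binom{n+k-1}{k}$ elements $s(i_1,\ldots,i_k)$ using Lemmas~\ref{MY1}--\ref{MY4} exactly as the paper does. Your added sentence making explicit why a surjection from an $m$-generated abelian group onto $\Z^m$ must be an isomorphism is a welcome clarification, but otherwise the argument is the paper's.
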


This theorem induces a Lie algebra exact sequence
\[ 0 \rightarrow \bigoplus_{k \geq 2} \mathrm{gr}^k(\mathcal{A}_n^M) \xrightarrow{\oplus_{k \geq 2} \tau_k^M}
      \mathrm{Der}^+(\mathcal{L}_n^M) \xrightarrow{\Theta^M} (H^* \otimes_{\Z} \Lambda^2 H ) \oplus \bigoplus_{k \geq 2} S^k H \rightarrow 0. \]

\section{Twisted cohomology groups with coefficients in $\Lambda^l H_{\Q}$}\label{S-Coh}

In general, for any $\mathrm{GL}(n,\Z)$-module $M$, we can naturally regard $M$ as an $\mathrm{Aut}\,F_n$-module and an $\mathrm{Aut}\,N_{n,k}$-module
through the surjective homomorphism
$\mathrm{Aut}\,F_n \rightarrow \mathrm{GL}(n,\Z)$ and $\mathrm{Aut}\,N_{n,k} \rightarrow \mathrm{GL}(n,\Z)$ respectively.
Here we consider the case where $M= \Lambda^l H_{\Q}$ for $l \geq 1$.
In this section, we study the twisted first and second cohomology groups of $T_{n,k}$ and $\mathrm{Aut}\,N_{n,k}$
with coefficients in $\Lambda^l H_{\Q}$.
In particular, we show that the trace map $\mathrm{Tr}_{[1^k]}^{\Q}$ for $\Lambda^k H_{\Q}$ defines a non-trivial cohomology class in
$H^2(T_{n,k}, \Lambda^k H_{\Q})$ for even $k$ and $2 \leq k \leq n$,
and $H^2(\mathrm{Aut}\,N_{n,k}, \Lambda^k H_{\Q})$ for any $3 \leq k \leq n$.

\vspace{0.5em}

In the following, for a group $G$ and a $G$-module $M$, we write $Z^1(G,M)$ for the abelian group of crossed homomorphisms from
$G$ to $M$. We denote by $\delta$ the coboundary operator in group cohomology theory.
We also remark that for any finite group $G$ and $G$-module $M$,
\[ H^p(G, M \otimes_{\Z} \Q) = 0, \hspace{1em} p \geq 1. \]

\vspace{0.5em}

\subsection{Twisted first cohomologies of $\mathrm{GL}(n,\Z)$ and $\mathrm{Aut}\,F_n$}\label{Ss-GLA}
\hspace*{\fill}\ 

Here we show that for $n \geq 3$ the first cohomologies of $\mathrm{GL}(n,\Z)$ and $\mathrm{Aut}\,F_n$ with coefficients in $\Lambda^l H_{\Q}$ are trivial
except for $H^1(\mathrm{Aut}\,F_n, H_{\Q})=\Q$.
\begin{prop}\label{P-Gen}
For $n \geq 3$ and $l \geq 1$, $H^1(\mathrm{GL}(n,\Z), \Lambda^l H_{\Q})=0$.
\end{prop}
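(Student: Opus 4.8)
The plan is to reduce, via cohomological triviality for the finite subgroup $\overline{\Omega}_n$, to controlling $f$ on the single generator $U$, and then to kill $f(U)$ using the $\pm 1$-diagonal matrices together with one well-chosen matrix identity. So let $f\in Z^1(\mathrm{GL}(n,\Z),\Lambda^l H_{\Q})$. Since $\overline{\Omega}_n$ is finite (of order $2^n n!$) and $\Lambda^l H_{\Q}$ is a $\Q$-vector space, $H^1(\overline{\Omega}_n,\Lambda^l H_{\Q})=0$; hence $f|_{\overline{\Omega}_n}=\delta v_0$ for some $v_0\in\Lambda^l H_{\Q}$, and after replacing $f$ by the cohomologous cocycle $f-\delta v_0$ we may assume $f$ vanishes on $P,Q,S$, hence on all of $\overline{\Omega}_n$. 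Because $\mathrm{GL}(n,\Z)=\langle P,Q,S,U\rangle$ (Theorem~\ref{T-Gen}), this normalized cocycle is now determined by $v:=f(U)$, and it suffices to show $v=0$.

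First I would note that $v$ is fixed by the centralizer of $U$ in the group $D\cong(\Z/2\Z)^n$ of diagonal sign matrices (which is contained in $\overline{\Omega}_n$): for $d\in D$ with $dUd^{-1}=U$, the cocycle identity together with $f(d)=0$ gives $v=f(dUd^{-1})=d\cdot v$. Viewing $U$ as a transvection on the plane $\langle x_1,x_2\rangle$, this centralizer is $C_D(U)=\{\mathrm{diag}(s_1,\dots,s_n)\mid s_1=s_2\}$. Decomposing $\Lambda^l H_{\Q}=\bigoplus_{|A|=l}\Q\,x_A$, where $x_A:=x_{i_1}\wedge\cdots\wedge x_{i_l}$ for $A=\{i_1<\cdots<i_l\}$, into its $D$-weight lines, one sees that $(\Lambda^l H_{\Q})^{C_D(U)}$ is spanned by those $x_A$ with $A\subseteq\{1,2\}$ and $|A|$ even. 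For $l\geq 1$ with $l\neq 2$ this space is $0$, so $v=0$.

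It remains to handle $l=2$, where so far $v=c\,x_1\wedge x_2$ for some $c\in\Q$. Here I would feed the cocycle identity a genuine relation that $f$ does not satisfy automatically: writing $U':=PUP$, the product $U'U^{-1}U'$ equals, in the $\langle x_1,x_2\rangle$-corner, the signed permutation matrix $\left(\begin{smallmatrix}0&1\\-1&0\end{smallmatrix}\right)$, so $U'U^{-1}U'\in\overline{\Omega}_n$ and $f$ vanishes on it. Using $f(U')=P\cdot f(U)=-v$, $f(U^{-1})=-U^{-1}\!\cdot v$, and the fact that $U^{\pm 1}$ and $U'$ act as determinant-$1$ maps on $\langle x_1,x_2\rangle$ and hence fix $x_1\wedge x_2$, the expansion $0=f(U'U^{-1}U')=f(U')+U'f(U^{-1})+U'U^{-1}f(U')$ collapses to $0=-3v$, so $v=0$. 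In every case the normalized cocycle then vanishes on $P,Q,S,U$, hence is identically zero, so the original $f=\delta v_0$ is a coboundary; thus $H^1(\mathrm{GL}(n,\Z),\Lambda^l H_{\Q})=0$. I expect the main obstacle to be the weight-space bookkeeping in the second paragraph combined with the observation that several of the listed Nielsen relations involving $U$ (for instance $(PSPU)^2=1$) turn out to impose no condition on $v$, so for $l=2$ one really must locate a relation — such as the braid-type identity in an $\mathrm{SL}_2$-corner used above — that does constrain it.
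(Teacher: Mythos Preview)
Your proof is correct and follows the same overall architecture as the paper's: normalize via the finite subgroup $\overline{\Omega}_n$ so that the cocycle vanishes on $P,Q,S$, then kill $v=f(U)$ using relations with sign matrices. The implementation differs in two places. First, the paper uses the individual diagonal reflections $Q^{-(j-1)}SQ^{j-1}$ (a single $-1$ in slot $j$) and reads off coefficients one at a time; your centralizer argument packages this as the weight-space identity $v\in(\Lambda^l H_\Q)^{C_D(U)}$, and because $C_D(U)$ contains $\mathrm{diag}(-1,-1,1,\ldots,1)$ you dispose of $l=1$ immediately, whereas the paper must separately invoke relations \textbf{(R11)} and \textbf{(R13)} to kill $a_1,a_2$ in that case. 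Second, for $l=2$ the paper uses \textbf{(R11)} ($PUPSU=USPS$) to get $3a_{1,2}=0$, while you use the $\mathrm{SL}_2$ braid identity $U'U^{-1}U'\in\overline{\Omega}_n$; both yield the same $-3v=0$ via the determinant-$1$ action on $x_1\wedge x_2$. Your version is a bit more conceptual and uniform across $l$, at the cost of invoking a relation not literally in Nielsen's list (which is of course fine, since a cocycle respects every identity in the group).
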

\begin{proof}
Take any crossed homomorphism $f: \mathrm{GL}(n,\Z) \rightarrow \Lambda^l H_{\Q}$ and consider the restriction of
$f_{\overline{\Omega}_n}$ to the subgroup $\overline{\Omega}_n$ of $\mathrm{GL}(n,\Z)$.
Since $\overline{\Omega}_n$ is a finite group, $H^1(\overline{\Omega}_n, \Lambda^l H_{\Q})=0$. Hence there exists some $x \in \Lambda^l H_{\Q}$
such that $f_{\overline{\Omega}_n} = \delta x$. Consider a crossed homomorphism
\[ f' := f - \delta x : \mathrm{GL}(n,\Z) \rightarrow \Lambda^l H_{\Q}. \]
Then we see that $f'(\sigma)=0$ for $\sigma=P$, $Q$ and $S$.
Hence it suffices to show $f'(U)=0$. 

\vspace{0.5em}

Set
\[ f'(U) := \sum_{1 \leq i_1 < \cdots < i_l \leq n} a_{i_1, \ldots, i_l} e_{i_1} \wedge \cdots \wedge e_{i_l} \in \Lambda^l H_{\Q} \]
for $a_{i_1, \ldots, i_l} \in \Q$.
Consider a relation $U \, Q^{-(j-1)} S Q^{j-1} = Q^{-(j-1)} S Q^{j-1} \, U$ in $\mathrm{GL}(n,\Z)$.
Since $f'$ is a crossed homomorphism, $f'$ satisfies
\begin{equation}\label{rel-1}
 (Q^{-(j-1)} S Q^{j-1}-1)f'(U)=(U-1)f'(Q^{-(j-1)} S Q^{j-1}) = 0
\end{equation}
since $f'(Q)=f'(S)=0$. 

\vspace{0.5em}

{\bf Case 1.} If $l \geq 3$, for any $a_{i_1, \ldots, i_l}$, we see $3 \leq i_l \leq n$. By observing the coefficients of
$e_{i_1} \wedge \cdots \wedge e_{i_l}$ in (\ref{rel-1})
for $j=i_l$, we obtain $2 a_{i_1, \ldots, i_l}=0$, and hence $a_{i_1, \ldots, i_l}=0$.

\vspace{0.5em}

{\bf Case 2.} Assume $l=2$. We can see $a_{i_1, i_2}=0$ for any $1 \leq i_1 < i_2 \leq n$, except for $(i_1,i_2)=(1,2)$, by the same argument as above.
To show $a_{1,2}=0$, consider the relation {\bf{(R11)}}: $PUPSU =USPS$. Since $f'$ is a crossed homomorphism, $f'$ satisfies
\begin{eqnarray*}
  & & f'(P) + Pf'(U) +PUf'(P) + PUPf'(S) + PUPSf'(U) \\
  & & \hspace{5em} = f'(U) + Uf'(S)+ USf'(P)+ USPf'(S),
\end{eqnarray*}
and hence
\begin{equation}\label{rel-2}
 Pf'(U) +PUPS f'(U) =f'(U)
\end{equation}
by $f'(P)=f'(S)=0$. Observing the coefficients of $e_1 \wedge e_2$, we have $3 a_{1,2}=0$, and hence $a_{1,2}=0$.

\vspace{0.5em}

{\bf Case 3.} Finally, assume $l=1$. Since $a_i=0$ for $3 \leq i \leq n$ by the same argument as above,
it suffices to show $a_1=a_2=0$. By observing the coefficients of $e_2$ in (\ref{rel-2}),
we see $a_1 =0$. Similarly, from the relation {\bf{(R13)}}: $(SU)^2=1$, we have
\[ f'(S)+ S f'(U) + SU f'(S) + SUS f'(U) = 0, \]
and
\[ f'(U) + US f'(U)=0 \]
by $f'(S)=0$. Observing the coefficients of $e_2$ in this equation, we obtain $a_2=0$.

\vspace{0.5em}

This shows that $f' \equiv 0$ as a crossed homomorphism for any case. Namely, $f=\delta x$. Hence we obtain the required results.
This completes the proof of Proposition {\rmfamily \ref{P-Gen}}. 
\end{proof}

\vspace{0.5em}

By the same argument as Proposition {\rmfamily \ref{P-Gen}}, we have
\begin{prop}\label{P-HAut}
For $n \geq 3$ and $l \geq 2$, $H^1(\mathrm{Aut}\,F_n, \Lambda^l H_{\Q})=0$.
\end{prop}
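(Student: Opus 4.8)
The plan is to run the argument of Proposition~\ref{P-Gen} essentially unchanged, replacing the presentation of $\mathrm{GL}(n,\Z)$ by Nielsen's presentation of $\mathrm{Aut}\,F_n$ with generators $P,Q,S,U$ and relations \textbf{(R1)}--\textbf{(R12)}, and to observe that the only place where the extra relation \textbf{(R13)}: $(SU)^2=1$ entered that proof was the analysis of the case $l=1$, which we now exclude.

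Concretely: given a crossed homomorphism $f\colon\mathrm{Aut}\,F_n\to\Lambda^l H_{\Q}$, I would first restrict $f$ to the finite subgroup $\Omega_n\subset\mathrm{Aut}\,F_n$, which is generated by $P$, $Q$, $S$. Since $\Omega_n$ is finite and $\Lambda^l H_{\Q}$ is a rational vector space, $H^1(\Omega_n,\Lambda^l H_{\Q})=0$, so $f|_{\Omega_n}=\delta x$ for some $x\in\Lambda^l H_{\Q}$; after replacing $f$ by the cohomologous cocycle $f':=f-\delta x$ we may assume $f'(P)=f'(Q)=f'(S)=0$, so it remains to prove $f'(U)=0$. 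Write $f'(U)=\sum_{1\le i_1<\cdots<i_l\le n}a_{i_1,\dots,i_l}\,e_{i_1}\wedge\cdots\wedge e_{i_l}$. Note that $f'$ vanishes on every word in $Q$ and $S$; hence, for $3\le j\le n$, applying the cocycle identity to the relation $U\,Q^{-(j-1)}SQ^{j-1}=Q^{-(j-1)}SQ^{j-1}\,U$ in $\mathrm{Aut}\,F_n$ gives $(Q^{-(j-1)}SQ^{j-1}-1)f'(U)=0$. Since $Q^{-(j-1)}SQ^{j-1}$ negates $e_j$ and fixes the other basis vectors, this forces $a_{i_1,\dots,i_l}=0$ whenever $j\in\{i_1,\dots,i_l\}$.

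For $l\ge 3$ the largest index of every multi-index satisfies $i_l\ge 3$, so taking $j=i_l$ for each multi-index kills all coefficients and $f'(U)=0$. For $l=2$ this kills every $a_{i_1,i_2}$ with $i_2\ge 3$, i.e. all but $a_{1,2}$; to dispose of the latter I would use relation \textbf{(R11)}: $PUPSU=USPS$ exactly as in Proposition~\ref{P-Gen}, which (using $f'(P)=f'(S)=0$) gives $Pf'(U)+PUPSf'(U)=f'(U)$, and reading off the coefficient of $e_1\wedge e_2$ yields $3a_{1,2}=0$. In all cases $f'(U)=0$, so $f=\delta x$ is a coboundary and $H^1(\mathrm{Aut}\,F_n,\Lambda^l H_{\Q})=0$.

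I do not expect any genuine obstacle: the computation is a line-by-line transcription of the proof of Proposition~\ref{P-Gen}. The one point that must be respected is the hypothesis $l\ge 2$ — the $l=1$ case of the $\mathrm{GL}(n,\Z)$ argument relied on \textbf{(R13)}: $(SU)^2=1$, which is not a relation in $\mathrm{Aut}\,F_n$, and indeed $H^1(\mathrm{Aut}\,F_n,H_{\Q})=\Q\neq 0$, so the argument genuinely cannot be pushed to $l=1$.
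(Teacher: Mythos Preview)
Your proposal is correct and follows exactly the approach the paper intends: the paper's own proof of this proposition is simply the sentence ``By the same argument as Proposition~\ref{P-Gen}'', and you have spelled out precisely that argument, correctly noting that the only use of the extra relation \textbf{(R13)} in Proposition~\ref{P-Gen} was in Case~3 ($l=1$), which is excluded here.
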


\vspace{0.5em}

Here we should remark that $H^1(\mathrm{Aut}\,F_n,H)=\Z$ for any $n \geq 2$. (See our paper \cite{S01}.) 

\subsection{Twisted cohomologies of $T_{n,k}$}\label{Ss-TAut}
\hspace*{\fill}\ 

Here we consider twisted cohomology groups of the tame automorphism group $T_{n,k}$ of $N_{n,k}$ for $k \geq 2$.
To begin with, from Proposition {\rmfamily \ref{P-HAut}}, we have 
\begin{lem}\label{L-TAut}
For any $n \geq 3$, $k \geq 2$ and $l \geq 2$, $H^1(T_{n,k}, \Lambda^l H_{\Q})=0$.
\end{lem}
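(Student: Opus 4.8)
The plan is to deduce the statement from Proposition \ref{P-HAut} using the surjection $\rho_k : \mathrm{Aut}\,F_n \twoheadrightarrow T_{n,k}$, whose kernel is $\mathcal{A}_n(k)$. First I would record that the $T_{n,k}$-module structure on $\Lambda^l H_{\Q}$ is obtained by inflating the $\mathrm{GL}(n,\Z)$-action along the natural surjection $T_{n,k} \twoheadrightarrow \mathrm{GL}(n,\Z)$, and that it is compatible with the $\mathrm{Aut}\,F_n$-module structure: since $\mathcal{A}_n(k) \subset \mathrm{IA}_n = \Ker\rho$, the composite $\mathrm{Aut}\,F_n \xrightarrow{\rho_k} T_{n,k} \twoheadrightarrow \mathrm{GL}(n,\Z)$ is exactly $\rho$. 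Consequently, precomposition with $\rho_k$ sends a crossed homomorphism $f : T_{n,k} \to \Lambda^l H_{\Q}$ to a crossed homomorphism $f \circ \rho_k : \mathrm{Aut}\,F_n \to \Lambda^l H_{\Q}$, carrying principal crossed homomorphisms to principal ones; this is precisely the inflation homomorphism on $H^1$.

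Next I would invoke the inflation--restriction (five-term) exact sequence attached to the extension $1 \to \mathcal{A}_n(k) \to \mathrm{Aut}\,F_n \to T_{n,k} \to 1$. Because $\mathcal{A}_n(k)$ acts trivially on $\Lambda^l H_{\Q}$ we have $(\Lambda^l H_{\Q})^{\mathcal{A}_n(k)} = \Lambda^l H_{\Q}$, so the sequence reads
\[ 0 \longrightarrow H^1(T_{n,k}, \Lambda^l H_{\Q}) \xrightarrow{\ \mathrm{inf}\ } H^1(\mathrm{Aut}\,F_n, \Lambda^l H_{\Q}) \longrightarrow H^1(\mathcal{A}_n(k), \Lambda^l H_{\Q})^{T_{n,k}} \longrightarrow \cdots, \]
and in particular inflation is injective. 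By Proposition \ref{P-HAut} the middle term vanishes for $n \geq 3$ and $l \geq 2$, hence $H^1(T_{n,k}, \Lambda^l H_{\Q}) = 0$, as claimed.

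Alternatively, one can proceed entirely by hand along the lines of the proof of Proposition \ref{P-Gen}: $T_{n,k}$ is generated by $P$, $Q$, $S$, $U$ and, being a quotient of $\mathrm{Aut}\,F_n$, satisfies all the relations used there (the Nielsen relations together with {\bf{(R13)}}); restricting a crossed homomorphism $f$ to the (finite) image of $\Omega_n$ in $T_{n,k}$ one subtracts a coboundary to arrange $f(P)=f(Q)=f(S)=0$, and then the relations $U\,Q^{-(j-1)}SQ^{j-1}=Q^{-(j-1)}SQ^{j-1}\,U$, {\bf{(R11)}} and {\bf{(R13)}} force $f(U)=0$ exactly as before. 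I do not expect a genuine obstacle in either route: the only point needing a word of care is the identification of the module structures and the triviality of the $\mathcal{A}_n(k)$-action, both immediate from the fact that $\Lambda^l H_{\Q}$ carries an action factoring through $\mathrm{GL}(n,\Z)$; all the real content already sits in Proposition \ref{P-HAut}.
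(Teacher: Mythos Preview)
Your first argument via the inflation map attached to $1 \to \mathcal{A}_n(k) \to \mathrm{Aut}\,F_n \to T_{n,k} \to 1$ is correct and is exactly the paper's proof, stated with a bit more detail. One caution on your alternative route: the relation {\bf{(R13)}} does \emph{not} hold in $T_{n,k}$ for $k \geq 2$ (indeed $(SU)^2 = K_{12}$ is a nontrivial IA-automorphism of $F_n$, hence nontrivial in $T_{n,k}$), so $T_{n,k}$ is not a quotient of $\mathrm{GL}(n,\Z)$; this does not actually harm your argument, since for $l \geq 2$ the computation in Proposition~\ref{P-Gen} uses only the Nielsen relations (R1)--(R12) and never {\bf{(R13)}}, but the claim as you wrote it is incorrect.
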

\begin{proof}
It is clear from the fact that the induced homomorphism
\[ H^1(\mathrm{Aut}\,N_{n,k}, \Lambda^l H_{\Q}) \rightarrow H^1(\mathrm{Aut}\,F_n, \Lambda^l H_{\Q})=0 \]
from the natural projection $\mathrm{Aut}\,F_n \rightarrow T_{n,k}$ is injective.
\end{proof}

Similarly, for $l=1$, we have an injective homomorphism
\begin{equation}\label{eq-HAN1}
 H^1(T_{n,k}, H) \rightarrow H^1(\mathrm{Aut}\,F_n, H)= \Z
\end{equation}
for $n \geq 2$. Hence, $H^1(T_{n,k}, H)=0$ or $\Z$. In order to show $H^1(T_{n,k}, H)=\Z$, we consider
Morita's crossed homomorphism. Let
\[ \frac{\partial}{\partial x_j} : \Z[F_n] \longrightarrow \Z[F_n] \]
be the Fox's free derivations for $1 \leq j \leq n$. (For a basic material concerning with the Fox's derivation, see
\cite{Bir} for example.)
Let $\frak{a} : \Z[F_n] \rightarrow \Z[H]$ be the ring homomorphism induced from the abelianization $F_n \rightarrow H$.
For any matrix $A=(a_{ij}) \in \mathrm{GL}(n,\Z[F_n])$, set $A^{\mathfrak{a}} = (a_{ij}^{\mathfrak{a}}) \in \mathrm{GL}(n,\Z[H])$.
Then a map
\[ r_M : \mathrm{Aut}\,F_n \longrightarrow \mathrm{GL}(n,\Z[H]) \]
defined by
\[ \sigma \mapsto \biggl{(} \frac{\partial x_i^{\sigma}}{\partial x_j}
    {\biggl{)}}^{\frak{a}} \]
is called the Magnus representation of $\mathrm{Aut}\,F_n$.
We remark that $r_M$ is not a homomorphism but a crossed homomorphism. Namely, $r_M$ satisfies
\[ r_M(\sigma \tau) = r_M(\sigma)^{\tau_*} \cdot {r_M(\tau)} \]
for any $\sigma$, $\tau \in \mathrm{Aut}\,F_n$ where ${r_M(\sigma)}^{\tau_*}$ 
denotes the matrix obtained from $r_M(\sigma)$ by applying a ring homomorphism ${\tau}_{*} : \Z[H] \rightarrow \Z[H]$
induced from $\tau$ on each entry. (For detail for the Magnus representation, see \cite{Mo1}.)

\vspace{0.5em}

Observing the images of the Nielsen's generators by $\mathrm{det} \circ r_M$, we verify that $\mathrm{Im}(\mathrm{det} \circ r_M)$ is contained in
a multiplicative abelian subgroup $\pm H$ of $\Z[H]$. In order to modify the image of $\mathrm{det} \circ r_M$, we consider the signature of $\mathrm{Aut}\,F_n$.
For any $\sigma \in \mathrm{Aut}\,F_n$, set $\mathrm{sgn}(\sigma) := \mathrm{det}(\rho(\sigma)) \in \{ \pm 1 \}$,
and define a map $f_M : \mathrm{Aut}\,F_n \longrightarrow \Z[H]$
by
\[ \sigma \mapsto \mathrm{sgn}(\sigma) \,\, \mathrm{det}(r_M(\sigma)). \]
Then the map $f_M$ is also crossed homomorphism which image is contained in a multiplicative abelian subgroup $H$ in $\Z[H]$.
In the following, we identify the multiplicative abelian group structure of $H$ with the additive one.
Morita \cite{Mo0} showed that the twisted first cohomology group of a mapping class group of a surface with coefficients in $H$
is the infinite cyclic group generated by $f_M$ restricted to the mapping class group.
In our previous paper \cite{S01}, we showed that $f_M$ is a generator of $H^1(\mathrm{Aut}\,F_n, H)= \Z$ for any $n \geq 2$.
We call $f_M$ Morita's crossed homomorphism.

\vspace{0.5em}

Now, we consider the restriction of $f_M$ to the IA-automorphism group $\mathrm{IA}_n$. It is a group homomorphism which target is an abelian group
$H$. On the other hand, $\mathcal{A}_n(2)$ coincide with the commutator subgroup of $\mathrm{IA}_n$ since $\mathrm{gr}^1(\mathcal{A}_n)$
is the abelianization of $\mathrm{IA}_n$ as mentioned above. Hence we see that $f_M(\mathcal{A}_n(2))=0$.
This shows that the crossed homomorphism $f_M : \mathrm{Aut}\,F_n \rightarrow H$ extends to the quotient group
$\mathrm{Aut}\,F_n/\mathcal{A}_n(k) \cong T_{n,k}$ for any $k \geq 2$. We also call this extended crossed homomorphism Morita's crossed homomorphism.
Hence the homomorphism (\ref{eq-HAN1}) is surjective, namely is an isomorphism. Therefore we have
\begin{prop}\label{P-TAut}
For any $n \geq 2$ and $k \geq 2$, $H^1(T_{n,k}, H)=\Z$ which is generated by the Morita's crossed homomorphism.
\end{prop}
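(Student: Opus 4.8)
The plan is to combine the known computation $H^1(\mathrm{Aut}\,F_n, H) = \Z$ from \cite{S01} with the fact that $T_{n,k}$ is a quotient of $\mathrm{Aut}\,F_n$. First I would recall that $T_{n,k} \cong \mathrm{Aut}\,F_n/\mathcal{A}_n(k)$, since $T_{n,k}$ is by definition the image of $\rho_k$ and $\mathcal{A}_n(k) = \Ker(\rho_k)$. Because $\mathcal{A}_n(k) \subset \mathrm{IA}_n$ acts trivially on $H$, we have $H^{\mathcal{A}_n(k)} = H$, so the cohomological five term exact sequence of the group extension $1 \to \mathcal{A}_n(k) \to \mathrm{Aut}\,F_n \to T_{n,k} \to 1$ with coefficients in $H$ begins with an injective inflation map
\[ H^1(T_{n,k}, H) \hookrightarrow H^1(\mathrm{Aut}\,F_n, H) = \Z. \]
Hence $H^1(T_{n,k}, H)$ is either $0$ or $\Z$, and it suffices to show that a generator of $H^1(\mathrm{Aut}\,F_n, H)$ lies in the image of inflation.

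The candidate is Morita's crossed homomorphism $f_M$, and the point is that $f_M$ descends to $T_{n,k}$, i.e.\ that $f_M$ vanishes on $\mathcal{A}_n(k)$. Since the Johnson filtration is descending, $\mathcal{A}_n(k) \subset \mathcal{A}_n(2)$ for $k \geq 2$, so it is enough to show $f_M(\mathcal{A}_n(2)) = 0$. The restriction of $f_M$ to $\mathrm{IA}_n$ is an honest group homomorphism into the abelian group $H$ --- the twisted $\mathrm{Aut}\,F_n$-action on $H$ factors through $\rho$, hence is trivial on $\mathrm{IA}_n$ --- and therefore kills the commutator subgroup $[\mathrm{IA}_n, \mathrm{IA}_n]$. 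The key input is the identification $\mathcal{A}_n(2) = [\mathrm{IA}_n, \mathrm{IA}_n]$, which follows from $\mathrm{gr}^1(\mathcal{A}_n) \cong \mathrm{IA}_n^{\mathrm{ab}}$ (Cohen--Pakianathan \cite{Co1, Co2}, Farb \cite{Far}, Kawazumi \cite{Kaw}; see (\ref{CPFK})), or equivalently from Bachmuth's $\mathcal{A}_n'(2) = \mathcal{A}_n(2)$ in \cite{Ba2}. Consequently $f_M(\mathcal{A}_n(2)) = 0$, so $f_M$ factors through $\mathrm{Aut}\,F_n/\mathcal{A}_n(k) \cong T_{n,k}$, yielding a crossed homomorphism $\overline{f}_M \in Z^1(T_{n,k}, H)$ whose inflation to $\mathrm{Aut}\,F_n$ is $f_M$.

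To conclude, since $f_M$ generates $H^1(\mathrm{Aut}\,F_n, H) = \Z$ and equals the inflation of $\overline{f}_M$, the inflation map above is surjective, hence an isomorphism, so $H^1(T_{n,k}, H) = \Z$ and is generated by the class of $\overline{f}_M$. (In particular $\overline{f}_M$ is not a coboundary, since otherwise its inflation $f_M$ would be a coboundary, contradicting \cite{S01}.) I expect the only genuine obstacle to be the vanishing $f_M|_{\mathcal{A}_n(2)} = 0$: everything else is the formal behaviour of the five term exact sequence, but that vanishing rests on the nontrivial facts that $\tau_1$ computes $\mathrm{IA}_n^{\mathrm{ab}}$ and that $f_M$, being built from the Magnus representation, is multiplicative as a crossed homomorphism in the sense recalled before the statement.
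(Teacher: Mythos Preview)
Your proposal is correct and follows essentially the same approach as the paper: inject $H^1(T_{n,k},H)$ into $H^1(\mathrm{Aut}\,F_n,H)=\Z$ via inflation, then show Morita's crossed homomorphism $f_M$ descends to $T_{n,k}$ by using $\mathcal{A}_n(2)=[\mathrm{IA}_n,\mathrm{IA}_n]$ (from $\mathrm{gr}^1(\mathcal{A}_n)\cong \mathrm{IA}_n^{\mathrm{ab}}$) together with the fact that $f_M|_{\mathrm{IA}_n}$ is an honest homomorphism into an abelian group. The only cosmetic difference is that you invoke the five term exact sequence explicitly, whereas the paper simply cites the injectivity of the induced map on $H^1$.
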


Furthermore, we see
\begin{lem}\label{L-TAut2}
For any $n \geq 2$, and $k \geq 2$, the natural projection $T_{n,k+1} \rightarrow T_{n,k}$ induces an isomorphism
\[ H^1(T_{n,k},H) \cong H^1(T_{n,k+1},H). \]
\end{lem}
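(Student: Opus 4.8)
The plan is to identify both cohomology groups explicitly rather than to chase the five‑term exact sequence of the extension (\ref{eq-TAut}). By Proposition \ref{P-TAut}, for every $m \geq 2$ the group $H^1(T_{n,m}, H)$ is infinite cyclic, generated by the class of Morita's crossed homomorphism. Here one must remember (from Subsection \ref{Ss-TAut}) that this crossed homomorphism $f_M \colon \mathrm{Aut}\,F_n \to H$ kills $\mathcal{A}_n(2)$, hence kills every $\mathcal{A}_n(m)$ with $m \geq 2$, so it descends to a crossed homomorphism on each quotient $T_{n,m} \cong \mathrm{Aut}\,F_n/\mathcal{A}_n(m)$; I will denote all these descents again by $f_M$. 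The key point is that they are mutually compatible: writing $q_m \colon \mathrm{Aut}\,F_n \to T_{n,m}$ for the natural projection and $p_k \colon T_{n,k+1} \to T_{n,k}$ for the projection of (\ref{eq-TAut}), one has $p_k \circ q_{k+1} = q_k$ because $\mathcal{A}_n(k+1) \subset \mathcal{A}_n(k)$, and $f_M = f_M \circ q_m$ on $\mathrm{Aut}\,F_n$ for every $m \geq 2$.

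With this in place, I would unwind the map induced on cohomology. The projection $p_k$ induces $p_k^{*} \colon H^1(T_{n,k}, H) \to H^1(T_{n,k+1}, H)$, $[c] \mapsto [c \circ p_k]$, where on both sides $H$ carries the $\mathrm{GL}(n,\Z)$‑module structure pulled back along $T_{n,\bullet} \to \mathrm{GL}(n,\Z)$; these actions match since $T_{n,k+1} \to \mathrm{GL}(n,\Z)$ factors through $p_k$. Applying $p_k^{*}$ to the class of $f_M$ on $T_{n,k}$, the cochain $f_M \circ p_k$ on $T_{n,k+1}$ satisfies $(f_M \circ p_k) \circ q_{k+1} = f_M \circ q_k = f_M$ on $\mathrm{Aut}\,F_n$, which also equals $f_M \circ q_{k+1}$ with $f_M$ now the crossed homomorphism on $T_{n,k+1}$; since $q_{k+1}$ is surjective this forces $f_M \circ p_k = f_M$ on $T_{n,k+1}$. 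Hence $p_k^{*}$ sends a generator of $H^1(T_{n,k}, H) \cong \Z$ to a generator of $H^1(T_{n,k+1}, H) \cong \Z$, and is therefore an isomorphism.

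I do not expect a serious obstacle; the only thing to watch is precisely the bookkeeping that the two occurrences of "$f_M$" really are the descents of one and the same crossed homomorphism on $\mathrm{Aut}\,F_n$, which is exactly what makes $p_k^{*}$ carry generator to generator. As an alternative route (longer, so I would not take it) one could instead feed (\ref{eq-TAut}) into the inflation-restriction exact sequence and reduce to showing $\mathrm{Hom}_{\mathrm{GL}(n,\Z)}(\mathrm{gr}^k(\mathcal{A}_n), H) = 0$; this holds because $\mathrm{gr}^k(\mathcal{A}_n) \cong \mathrm{Im}(\tau_k)$ embeds $\mathrm{GL}(n,\Z)$‑equivariantly in $\mathrm{Der}^+(\mathcal{L}_n)(k) = H^* \otimes_{\Z} \mathcal{L}_n(k+1)$, on which the scalar $tI$ acts by $t^{k}$, whereas it acts on $H_{\Q}$ by $t$, so for $k \geq 2$ no copy of $H$ can split off $\mathrm{gr}^k(\mathcal{A}_n)$ $-$ but converting this scalar‑weight statement into a statement about $\mathrm{GL}(n,\Z)$‑equivariant maps needs the Zariski density of $\mathrm{GL}(n,\Z)$ in $\mathrm{GL}(n)$, which is why the generator argument is preferable.
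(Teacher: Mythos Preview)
Your proposal is correct and follows essentially the same approach as the paper: both arguments invoke Proposition~\ref{P-TAut} to identify each $H^1(T_{n,m},H)$ with $\Z$ generated by Morita's crossed homomorphism, and then observe that the inflation map $p_k^{*}$ carries this generator to the corresponding generator. Your write-up simply makes the compatibility of the various descents of $f_M$ more explicit than the paper does.
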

\begin{proof}
Clearly, the induced homomorphism $H^1(T_{n,k},H) \rightarrow H^1(T_{n,k+1},H)$ from the natural projection $T_{n,k+1} \rightarrow T_{n,k}$
maps the cohomology class of the Morita's crossed homomorphism in $H^1(T_{n,k},H)$ to that in $H^1(T_{n,k+1},H)$.
Hence we see the required results.
\end{proof}

\vspace{0.5em}

Now, from the cohomological five term exact sequence of the group extension (\ref{eq-TAut}), for $k \geq 2$ and $l \geq 1$, we have
\begin{eqnarray*}
    & & 0 \rightarrow H^1(T_{n,k}, \Lambda^l H_{\Q}) \rightarrow H^1(T_{n,k+1}, \Lambda^l H_{\Q}) \\
    & & \hspace{2em} \rightarrow H^1(\mathrm{gr}^k(\mathcal{A}_n), \Lambda^l H_{\Q})^{\mathrm{GL}(n,\Z)}
        \rightarrow H^2(T_{n,k}, \Lambda^l H_{\Q}) \rightarrow H^2(T_{n,k+1}, \Lambda^l H_{\Q}).
\end{eqnarray*}
From Lemmas {\rmfamily \ref{L-TAut}} and {\rmfamily \ref{L-TAut2}}, we see that
\[ 0 \rightarrow H^1(\mathrm{gr}^k(\mathcal{A}_n), \Lambda^l H_{\Q})^{\mathrm{GL}(n,\Z)}
        \rightarrow H^2(T_{n,k}, \Lambda^l H_{\Q}) \rightarrow H^2(T_{n,k+1}, \Lambda^l H_{\Q}) \]
is exact.

\vspace{0.5em}

On the other hand, we have a $\mathrm{GL}(n,\Z)$-equivariant homomorphism $\mathrm{Tr}_{[1^k]} \circ \tau_{k}$, namely
$\mathrm{Tr}_{[1^k]} \circ \tau_{k} \in H^1(\mathrm{gr}^k(\mathcal{A}_n), \Lambda^l H_{\Q})^{\mathrm{GL}(n,\Z)}$.
In \cite{S03}, we showed that $\mathrm{Tr}_{[1^k]} \circ \tau_{k}$ is surjective for even $k$ and $2 \leq k \leq n$.
In particular, we have
\begin{prop}\label{T-ES_TAut}
For even $k$ and $2 \leq k \leq n$, we see $0 \neq \mathrm{tg}(\mathrm{Tr}_{[1^k]} \circ \tau_{k}) \in  H^2(T_{n,k}, \Lambda^k H_{\Q})$
where $\mathrm{tg}$ is the transgression map.
\end{prop}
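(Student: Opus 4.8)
The statement now follows by assembling facts that are already in place. Since $k$ is even, $l:=k\geq 2$, so Lemma \ref{L-TAut} applies and, exactly as in the paragraph preceding the proposition, the five term exact sequence of the extension (\ref{eq-TAut}) collapses to the exact sequence
\[ 0 \rightarrow H^1(\mathrm{gr}^k(\mathcal{A}_n), \Lambda^k H_{\Q})^{\mathrm{GL}(n,\Z)} \xrightarrow{\mathrm{tg}} H^2(T_{n,k}, \Lambda^k H_{\Q}) \rightarrow H^2(T_{n,k+1}, \Lambda^k H_{\Q}); \]
in particular the transgression $\mathrm{tg}$ is injective on $H^1(\mathrm{gr}^k(\mathcal{A}_n), \Lambda^k H_{\Q})^{\mathrm{GL}(n,\Z)}$. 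Hence it is enough to check that $\mathrm{Tr}_{[1^k]}\circ\tau_{k}$ is a non-zero element of that group. Now $\mathrm{gr}^k(\mathcal{A}_n)$ is abelian (indeed $[\mathcal{A}_n(k),\mathcal{A}_n(k)]\subseteq\mathcal{A}_n(2k)\subseteq\mathcal{A}_n(k+1)$) and acts trivially on $\Lambda^k H_{\Q}$, since that action factors through $\mathrm{GL}(n,\Z)$ and $\mathrm{gr}^k(\mathcal{A}_n)$ maps trivially there; so this cohomology group is just the group $\mathrm{Hom}_{\mathrm{GL}(n,\Z)}(\mathrm{gr}^k(\mathcal{A}_n),\Lambda^k H_{\Q})$ of $\mathrm{GL}(n,\Z)$-equivariant homomorphisms, and $\mathrm{Tr}_{[1^k]}\circ\tau_{k}$ belongs to it as a composite of equivariant maps. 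Finally $\mathrm{Tr}_{[1^k]}\circ\tau_{k}\neq 0$ because it is surjective for even $k$ and $2\leq k\leq n$, which is the result of \cite{S03} recalled just before the proposition. Combining this with the injectivity of $\mathrm{tg}$ yields $0\neq\mathrm{tg}(\mathrm{Tr}_{[1^k]}\circ\tau_{k})\in H^2(T_{n,k},\Lambda^k H_{\Q})$.

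If one prefers not to quote the surjectivity from \cite{S03}, the non-vanishing of $\mathrm{Tr}_{[1^k]}\circ\tau_{k}$ can also be read off from Section \ref{S-HW}, at least for $n\geq k+2$: by Proposition \ref{prop:3} the contraction $\Phi_{\Q}^k$ is onto $H_{\Q}^{\otimes k}$, hence $\mathrm{Tr}_{[1^k]}^{\Q}=f_{[1^k]}^{\Q}\circ\Phi_{\Q}^k$ is a surjection of $\mathrm{GL}(n,\Q)$-modules $\mathrm{Der}^+(\mathcal{L}_{n,\Q})(k)\to L^{(1^k)}$, and $L^{(1^k)}=\Lambda^k H_{\Q}\neq 0$ since $k\leq n$. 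The multiplicity of $L^{(1^k)}$ in $\mathrm{Der}^+(\mathcal{L}_{n,\Q})(k)$ equals $[L^{(1^k)}:H_{\Q}^{\otimes k}]=1$ by Proposition \ref{prop:3}, while $[L^{(1^k)}:\mathrm{Coker}(\tau_{k,\Q}')]=[L^{(1^k)}:\mathcal{C}_n^{\Q}(k)]=0$ for even $k$ by Corollary \ref{cor:multc}(2) and (\ref{eq-dec}); so $L^{(1^k)}$ occurs exactly once in $\mathrm{Im}(\tau_{k,\Q}')$, hence also in $\mathrm{Im}(\tau_{k,\Q})$. By Schur's lemma the above surjection is non-zero on this unique $L^{(1^k)}$-line, which lies inside $\mathrm{Im}(\tau_{k,\Q})$, so $\mathrm{Tr}_{[1^k]}^{\Q}\circ\tau_{k,\Q}\neq 0$ and therefore $\mathrm{Tr}_{[1^k]}\circ\tau_{k}\neq 0$.

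In effect all of the real work has been done by the time the proposition is stated: the homological bookkeeping (the five term exact sequence together with Lemmas \ref{L-TAut} and \ref{L-TAut2}) and the non-triviality of the equivariant homomorphism $\mathrm{Tr}_{[1^k]}\circ\tau_{k}$. The one point I would flag as essential is the parity of $k$ in this last step: for odd $k$ the copy of $\Lambda^k H_{\Q}$ sits inside $\mathrm{Coker}(\tau_{k,\Q}')$ rather than inside $\mathrm{Im}(\tau_{k,\Q})$, so $\mathrm{Tr}_{[1^k]}\circ\tau_{k}$ vanishes and the argument breaks down; that case is instead handled on $\mathrm{Aut}\,N_{n,k}$ in Proposition \ref{T-ES_NAut}.
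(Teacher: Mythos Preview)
Your proof is correct and follows exactly the paper's approach: the five-term exact sequence of (\ref{eq-TAut}) together with Lemma~\ref{L-TAut} makes the transgression injective, and the surjectivity of $\mathrm{Tr}_{[1^k]}\circ\tau_k$ for even $k$ from \cite{S03} gives the required non-zero element. Your second paragraph, deriving the non-vanishing from the multiplicity computations of Section~\ref{S-HW} instead of citing \cite{S03}, is a pleasant alternative (valid for $n\geq k+2$) that the paper does not spell out.
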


\subsection{Twisted cohomologies of $\mathrm{Aut}\,N_{n,k}$}\label{Ss-AutN}
\hspace*{\fill}\ 

In this subsection, for $k \geq 3$, we consider twisted cohomology groups of $\mathrm{Aut}\,N_{n,k}$ with coefficients in $\Lambda^l H_{\Q}$.

\begin{prop}\label{P-HAN1}
For $k \geq 3$, $n \geq k-1$ and $l \geq 3$, $H^1(\mathrm{Aut}\,N_{n,k}, \Lambda^l H_{\Q})=0$.
\end{prop}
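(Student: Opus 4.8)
The plan is to run the argument of Propositions \ref{P-Gen} and \ref{P-HAut}, with one extra step to deal with the Bryant--Gupta generator $\theta$. We may assume $n\geq l$, since otherwise $\Lambda^l H_{\Q}=0$ and there is nothing to prove; as $l\geq 3$ this means in particular $n\geq 3$. First I would take an arbitrary crossed homomorphism $f:\mathrm{Aut}\,N_{n,k}\rightarrow \Lambda^l H_{\Q}$ and restrict it to the image $\overline{\Omega}_n$ of the extended symmetric group $\Omega_n$ inside $\mathrm{Aut}\,N_{n,k}$. This image is a finite group (isomorphic to $\Omega_n$, since the composite $\Omega_n\hookrightarrow\mathrm{Aut}\,F_n\rightarrow\mathrm{Aut}\,N_{n,k}\xrightarrow{\rho}\mathrm{GL}(n,\Z)$ is injective), so $H^1(\overline{\Omega}_n,\Lambda^l H_{\Q})=0$ because $\Lambda^l H_{\Q}$ is a $\Q$-vector space. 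Hence $f|_{\overline{\Omega}_n}=\delta x$ for some $x\in\Lambda^l H_{\Q}$; replacing $f$ by $f':=f-\delta x$, we may assume $f'$ vanishes on $P$, $Q$, $S$, hence on all of $\overline{\Omega}_n$. Since $\mathrm{Aut}\,N_{n,k}$ is generated by $P,Q,S,U,\theta$ for $k\geq 3$, $n\geq k-1$ (Bryant--Gupta), it suffices to show $f'(U)=0$ and $f'(\theta)=0$.

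For $f'(U)=0$ I would invoke Case 1 of the proof of Proposition \ref{P-Gen} verbatim: the relation $U\,Q^{-(j-1)}SQ^{j-1}=Q^{-(j-1)}SQ^{j-1}\,U$ is a relation among the Nielsen generators in $\mathrm{Aut}\,F_n$, hence holds between their images in $\mathrm{Aut}\,N_{n,k}$, and the crossed homomorphism identity with $f'(Q)=f'(S)=0$ gives $(Q^{-(j-1)}SQ^{j-1}-1)f'(U)=0$; writing $f'(U)=\sum_{i_1<\cdots<i_l}a_{i_1,\dots,i_l}\,e_{i_1}\wedge\cdots\wedge e_{i_l}$ and reading off the coefficient of $e_{i_1}\wedge\cdots\wedge e_{i_l}$ with $j=i_l\geq 3$ (possible since $l\geq 3$) yields $2a_{i_1,\dots,i_l}=0$, so $f'(U)=0$. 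The genuinely new point is $f'(\theta)=0$, and this is where $l\geq 3$ is used. For each $j$ with $3\leq j\leq n$, let $S_j\in\Omega_n$ be the sign change $x_j\mapsto x_j^{-1}$ (a $Q$-conjugate of $S$). A direct check of the action on $x_1,\dots,x_n$ inside $N_{n,k}$ shows $\theta S_j=S_j\theta$: both automorphisms fix $x_j$, $S_j$ fixes $x_1$ and $x_2$, and $\theta$ modifies only $x_1$ by a word in $x_1,x_2$. Then the crossed homomorphism identity gives $(S_j-1)f'(\theta)=(\theta-1)f'(S_j)=0$, since $f'(S_j)=0$ ($S_j\in\overline{\Omega}_n$). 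Thus $f'(\theta)$ is fixed by $e_j\mapsto -e_j$ for every $3\leq j\leq n$; writing $f'(\theta)=\sum_I a_I e_I$ over $l$-element subsets $I$, any $I$ meeting $\{3,\dots,n\}$ forces $a_I=-a_I$, i.e. $a_I=0$, and since $l\geq 3>2$ every such $I$ meets $\{3,\dots,n\}$. Hence $f'(\theta)=0$, so $f'\equiv 0$, $f=\delta x$, and $H^1(\mathrm{Aut}\,N_{n,k},\Lambda^l H_{\Q})=0$.

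The only real obstacle I anticipate is the bookkeeping in verifying that $\theta S_j=S_j\theta$ genuinely holds in the nilpotent quotient $N_{n,k}$ (not merely in $F_n$, where $\theta$ is not even defined as given), and checking that the relevant coset/relation manipulations are legitimate there; once that commutation relation is in hand the rest is a transcription of the arguments already carried out for $\mathrm{GL}(n,\Z)$ and $\mathrm{Aut}\,F_n$. It is worth noting that the hypothesis $l\geq 3$ is essential to this method: for $l\leq 2$ the sign-change trick only kills index sets meeting $\{3,\dots,n\}$, leaving the set $\{1,2\}$ untouched, which is exactly why the cases $l=2$ and $l=1$ must be treated separately and may behave differently.
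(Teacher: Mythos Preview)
Your argument is correct and follows essentially the same route as the paper: reduce to a crossed homomorphism $f'$ vanishing on the Nielsen generators, then kill $f'(\theta)$ using the commutation of $\theta$ with the sign changes $S_j=Q^{-(j-1)}SQ^{j-1}$ for $j\geq 3$, which is exactly the relation the paper uses. The only packaging difference is that the paper disposes of $f'(U)$ more efficiently: rather than rerunning Case~1 of Proposition~\ref{P-Gen}, it pulls $f$ back along $\mathrm{Aut}\,F_n\to\mathrm{Aut}\,N_{n,k}$ and invokes Proposition~\ref{P-HAut} to get $f'(P)=f'(Q)=f'(S)=f'(U)=0$ in one stroke, leaving only $\theta$ to be handled.
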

\begin{proof}
Take any crossed homomorphism $f: \mathrm{Aut}\,N_{n,k} \rightarrow \Lambda^l H_{\Q}$.
Let $g$ be the image of $f$ under the homomorphism
\[ Z^1(\mathrm{Aut}\,N_{n,k}, \Lambda^l H_{\Q}) \rightarrow Z^1(\mathrm{Aut}\,F_n, \Lambda^l H_{\Q}) \]
induced from the natural homomorphism $\mathrm{Aut}\,F_n \rightarrow \mathrm{Aut}\,N_{n,k}$.
Since $H^1(\mathrm{Aut}\,F_n, \Lambda^l H_{\Q})=0$ from Proposition {\rmfamily \ref{P-HAut}}, there exists some $x \in \Lambda^l H_{\Q}$
such that $g = \delta x \in Z^1(\mathrm{Aut}\,F_n, \Lambda^l H_{\Q})$.

\vspace{0.5em}

Set $f':=f-\delta x \in Z^1(\mathrm{Aut}\,N_{n,k}, \Lambda^l H_{\Q})$. Then we have
\[ f'(\sigma) = f(\sigma) - \delta x = g(\sigma) - \delta x=0 \]
for $\sigma = P$, $Q$, $S$ and $U$. Hence it suffices to show $f'(\theta)=0$. Set
\[ f'(\theta) := \sum_{1 \leq i_1 < \cdots < i_l \leq n} b_{i_1, \ldots, i_l} e_{i_1} \wedge \cdots \wedge e_{i_l} \in \Lambda^l H_{\Q} \]
for $b_{i_1, \ldots, i_l} \in \Q$.
Consider a relation $\theta \, Q^{-(j-1)} S Q^{j-1} = Q^{-(j-1)} S Q^{j-1} \, \theta$ in $\mathrm{Aut}\,N_{n,k}$.
Since $f'$ is a crossed homomorphism, $f'$ satisfies
\begin{equation}\label{rel-3}
 (Q^{-(j-1)} S Q^{j-1}-1)f'(\theta)=(\theta-1)f'(Q^{-(j-1)} S Q^{j-1}) = 0
\end{equation}
since $f'(Q)=f'(S)=0$.

\vspace{0.5em}

Since $l \geq 3$, for any $b_{i_1, \ldots, i_l}$, we see $3 \leq i_l \leq n$. By observing the coefficients of 
$e_{i_1} \wedge \cdots \wedge e_{i_l}$ in (\ref{rel-3})
for $j=i_l$, we obtain $2 b_{i_1, \ldots, i_l}=0$, and hence $b_{i_1, \ldots, i_l}=0$.

\vspace{0.5em}

Therefore we see that $f'\equiv0$, and hence $f=\delta x$. This completes the proof of Proposition {\rmfamily \ref{P-HAN1}}. 
\end{proof}

\vspace{0.5em}

By considering the cohomological five term exact sequence of the group extension (\ref{eq-ext}), we obtain
\begin{eqnarray*}
    & & 0 \rightarrow H^1(\mathrm{Aut}\,N_{n,k}, \Lambda^l H_{\Q}) \rightarrow H^1(\mathrm{Aut}\,N_{n,k+1}, \Lambda^l H_{\Q}) \\
    & & \hspace{1em} \rightarrow H^1(H^* \otimes_{\Z} \mathcal{L}_n(k+1), \Lambda^l H_{\Q})^{\mathrm{GL}(n,\Z)}
        \rightarrow H^2(\mathrm{Aut}\,N_{n,k}, \Lambda^l H_{\Q}) \rightarrow H^2(\mathrm{Aut}\,N_{n,k+1}, \Lambda^l H_{\Q})
\end{eqnarray*}
for $k \geq 3$, $n \geq k-1$ and $l \geq 3$.
From Proposition {\rmfamily \ref{P-HAN1}}, we have an exact sequence
\[ 0 \rightarrow H^1(H^* \otimes_{\Z} \mathcal{L}_n(k+1), \Lambda^k H_{\Q})^{\mathrm{GL}(n,\Z)}
        \rightarrow H^2(\mathrm{Aut}\,N_{n,k}, \Lambda^k H_{\Q}) \rightarrow H^2(\mathrm{Aut}\,N_{n,k+1}, \Lambda^k H_{\Q}) \]
for $k \geq 3$ and $n \geq k-1$. Since the trace map
$\mathrm{Tr}_{[1^k]} \in H^1(H^* \otimes_{\Z} \mathcal{L}_n(k+1), \Lambda^k H_{\Q})^{\mathrm{GL}(n,\Z)}$
is surjective for any $3 \leq k \leq n$, we have
\begin{prop}\label{T-ES_NAut}
For $k \geq 3$ and $n \geq k$, we see $0 \neq \mathrm{tg}(\mathrm{Tr}_{[1^k]}) \in H^2(\mathrm{Aut}\,N_{n,k}, \Lambda^k H_{\Q})$
where $\mathrm{tg}$ is the transgression map.
\end{prop}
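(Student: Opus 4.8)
The plan is to read the statement off the three-term exact sequence
\[ 0 \rightarrow H^1(H^* \otimes_{\Z} \mathcal{L}_n(k+1), \Lambda^k H_{\Q})^{\mathrm{GL}(n,\Z)} \xrightarrow{\,\mathrm{tg}\,} H^2(\mathrm{Aut}\,N_{n,k}, \Lambda^k H_{\Q}) \rightarrow H^2(\mathrm{Aut}\,N_{n,k+1}, \Lambda^k H_{\Q}) \]
obtained just above from the cohomological five term exact sequence of the Andreadakis group extension (\ref{eq-ext}) for $\mathrm{Aut}\,N_{n,k+1} \twoheadrightarrow \mathrm{Aut}\,N_{n,k}$ with coefficients in $\Lambda^k H_{\Q}$. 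Since this sequence begins with a $0$, the transgression $\mathrm{tg}$ is injective, so the whole proposition reduces to checking that $\mathrm{Tr}_{[1^k]}$ is a non-zero element of the group $H^1(H^* \otimes_{\Z} \mathcal{L}_n(k+1), \Lambda^k H_{\Q})^{\mathrm{GL}(n,\Z)}$.

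First I would make that group explicit. The kernel $N := H^* \otimes_{\Z} \mathcal{L}_n(k+1)$ of (\ref{eq-ext}) is a finitely generated free abelian group on which $\mathrm{Aut}\,N_{n,k}$ acts through $\mathrm{GL}(n,\Z)$, and $\Lambda^k H_{\Q}$ likewise carries the $\mathrm{GL}(n,\Z)$-action pulled back from the quotient; since $N$ acts trivially on the coefficients we get $H^1(N, \Lambda^k H_{\Q}) = \mathrm{Hom}_{\Z}(N, \Lambda^k H_{\Q})$, whose invariant part is exactly $\mathrm{Hom}_{\mathrm{GL}(n,\Z)}(H^* \otimes_{\Z} \mathcal{L}_n(k+1), \Lambda^k H_{\Q})$, the $\Q$-vector space of $\mathrm{GL}(n,\Z)$-equivariant homomorphisms. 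Now $\mathrm{Tr}_{[1^k]} = f_{[1^k]} \circ \Phi^k$ is $\mathrm{GL}(n,\Z)$-equivariant by construction, so (after composing with $\Lambda^k H \hookrightarrow \Lambda^k H_{\Q}$) it lies in this space; and it is surjective onto $\Lambda^k H$ for $3 \leq k \leq n$, recalled above from \cite{S03}, hence in particular it is not the zero homomorphism into $\Lambda^k H_{\Q}$. Injectivity of $\mathrm{tg}$ then gives $\mathrm{tg}(\mathrm{Tr}_{[1^k]}) \neq 0$ in $H^2(\mathrm{Aut}\,N_{n,k}, \Lambda^k H_{\Q})$, which is the assertion.

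What remains is bookkeeping on the indices: the five term sequence of (\ref{eq-ext}) collapses to the displayed three-term sequence exactly when $H^1(\mathrm{Aut}\,N_{n,k}, \Lambda^k H_{\Q}) = 0$ and $H^1(\mathrm{Aut}\,N_{n,k+1}, \Lambda^k H_{\Q}) = 0$. Both follow from Proposition \ref{P-HAN1} applied with $l = k \geq 3$: at level $k$ its hypothesis $n \geq k - 1$ is implied by $n \geq k$, and at level $k+1$ it reads $n \geq (k+1) - 1 = k$, so the single hypothesis $n \geq k$ of the present proposition covers both. I do not expect a genuine obstacle inside this argument: all of the weight sits upstream, in Proposition \ref{P-HAN1} (the vanishing of $H^1(\mathrm{Aut}\,N_{n,k}, \Lambda^l H_{\Q})$, itself proved via the Bryant--Gupta generating set $P, Q, S, U, \theta$) and in the surjectivity of $\mathrm{Tr}_{[1^k]}$ from \cite{S03}. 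The only points requiring care here are the trivial-coefficient identification of $H^1$ of the abelian kernel $N$ and keeping track of the ranges of $k$ and $n$.
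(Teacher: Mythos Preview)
Your proposal is correct and is essentially the paper's own argument: use Proposition~\ref{P-HAN1} to kill both $H^1$ terms in the five-term sequence of (\ref{eq-ext}), so that the transgression is injective, and then feed in the nonzero $\mathrm{GL}(n,\Z)$-equivariant map $\mathrm{Tr}_{[1^k]}$, which is surjective for $3 \leq k \leq n$ by \cite{S03}. Your index check that $n \geq k$ is exactly what is needed to apply Proposition~\ref{P-HAN1} at level $k+1$ is the right bookkeeping.
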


\vspace{0.3em}

\begin{rem}\label{rem:mult}
Finally, we remark on the multiplicity of $\mathrm{GL}(n,\Q)$-trivial part in 
$\mathrm{Hom}_{\Z}(H_{\Q}^* \otimes_{\Z} \mathcal{L}_{n,\Q}(k+1), \Lambda^k H_{\Q})$ and $\mathrm{Hom}_{\Z}(\mathrm{gr}_{\Q}^k(\mathcal{A}_n), \Lambda^k H_{\Q})$.

\vspace{0,5em}

First of all, if $n \ge k+2$ and an irreducible representation $L^{\{\lambda;\mu\}}$ appears in the decomposition of
$(H_{\Q}^* \otimes_{\Z} \mathcal{L}_{n,\Q}(k+1))^*$, then $\ell(\lambda)+\ell(\mu) \le k+2$ by Corollary \ref{cor:dtp}.
Note that $\Lambda^k H_{\Q} = L^{(1^k)}$ and the multiplicity of $(\Lambda^k H_{\Q})^* = L^{\{0;(1^k)\}}$ is exactly one in
$(H_{\Q}^* \otimes_{\Z} \mathcal{L}_{n,\Q}(k+1))^*$ by the part (2) of Corollary \ref{cor:2}.

\vspace{0.5em}

Using Corollary {\rmfamily \ref{cor:mtv}}, the multiplicity of the $\mathrm{GL}(n,\Q)$-trivial part $L^{(0)}$
in $(H_{\Q}^* \otimes_{\Z} \mathcal{L}_{n,\Q}(k+1))^* \otimes \Lambda^k H_{\Q}$ is exactly one under the condition $n \geq 2k+2$. 
Thus we obtain 
\[
 \mathrm{Hom}_{\Z}(H_{\Q}^* \otimes_{\Z} \mathcal{L}_{n,\Q}(k+1), \Lambda^k H_{\Q})^{\mathrm{GL}(n,\Q)}
    \cong \Q
\]
for $n \geq 2k+2$. Hence, $\mathrm{Hom}_{\Z}(H_{\Q}^* \otimes_{\Z} \mathcal{L}_{n,\Q}(k+1), \Lambda^k H_{\Q})^{\mathrm{GL}(n,\Q)}$
is generated by $\mathrm{Tr}_{[1^k]}^{\Q}$ for $n \geq 2k+2$.

\vspace{0.5em}

Let us recall that $\mathrm{Im}(\tau_{k,\Q}') \subset \mathrm{Im}(\tau_{k,\Q}) \subset H_{\Q}^* \otimes \mathcal{L}_{n,\Q}(k+1)$ and
$\mathrm{Im}(\tau_{k,\Q}) \cong \mathrm{gr}_{\Q}^k(\mathcal{A}_n)$ as $\mathrm{GL}(n,\Q)$-modules.
Since we have a non-zero element
$\mathrm{Tr}_{[1^k]}^{\Q} \circ \tau_{k,\Q} \in \mathrm{Hom}_{\Z}(\mathrm{gr}_{\Q}^k(\mathcal{A}_n), \Lambda^l H_{\Q})^{\mathrm{GL}(n,\Q)}$
for even $k$ and $2 \leq k \leq n$, we obtain 
\[ 
\mathrm{Hom}_{\Z}(\mathrm{gr}_{\Q}^k(\mathcal{A}_n), \Lambda^k H_{\Q})^{\mathrm{GL}(n,\Q)} = \Q 
\]
for even $k$ and $2k+2 \le n$. Hence $\mathrm{Hom}_{\Z}(\mathrm{gr}^k(\mathcal{A}_n), \Lambda^k H_{\Q})^{\mathrm{GL}(n,\Q)}$ is generated by
$\mathrm{Tr}_{[1^k]}^{\Q} \circ \tau_{k,\Q}$.

\vspace{0.5em}

At the present stage, however, it seems to difficult to determine the precise structures of
$H^1(H^* \otimes_{\Z} \mathcal{L}_n(k+1), \Lambda^k H_{\Q})^{\mathrm{GL}(n,\Z)}$ and
$H^1(\mathrm{gr}^k(\mathcal{A}_n), \Lambda^k H_{\Q})^{\mathrm{GL}(n,\Z)}$ in general.
\end{rem}

\section{Acknowledgements}

The second author would like to thank Professor Nariya Kawazumi for helpful comments
for a simple calculation of twisted first cohomology groups with rational coefficients.

Both authors would like to thank Professor Shigeyuki Morita for heplful comments with respect to his work,
and sincere encouragement for our research.

Both authors are supported by JSPS Research Fellowship for Young Scientists and the Global COE program at Kyoto University. 


\end{document}